\newcommand{\tr}{\operatorname{tr}}
\newcommand{\Tr}{\operatorname{Tr}}
\theoremstyle{plain}
\newtheorem{theorem}{Theorem}[section]
\newtheorem{corollary}[theorem]{Corollary}
\newtheorem{prop}[theorem]{Proposition}
\newtheorem{lemme}[theorem]{Lemma}
\newtheorem{definition}[theorem]{Definition}
\newtheorem{rem}[theorem]{Remark}
\newtheorem{ex}[theorem]{Example} 
\def\br{\begin{color}{red}}
\def\er{\end{color}}
\def\C{\mathbb{C}}
\def\N{\mathbb{N}}
\def\EE{\mathbb{E}}
\def\Wg{\mathrm{Wg}}
\def\df{\mathrm{df}}
\def\<{\langle}
\def\>{\rangle}
\def\lto{\longrightarrow}
\def\lmto{\longmapsto}
\def\ts{\otimes}
\def\M{\mathrm{Moeb}}
\def\Mp{\mathrm{Moeb}^{(2)}}
\title{Freeness of type $B$ and conditional freeness\\for random matrices}
\author[,1]{Guillaume C\'{e}bron\thanks{G.C. has been partly supported by the ERC advanced grant ``Noncommutative distributions in free probability", partly by the Project
MESA (ANR-18-CE40-006) and partly by the Project STARS (ANR-20-CE40-0008) of the French National Research Agency. G.C. wishes to thank Roland Speicher, for giving him the opportunity to co-organize the Mini-workshop \emph{Notions of freeness} in April 2015, during which the authors conceived of
the first ideas that led to the present work.}}
\author[2]{Antoine Dahlqvist}
\author[3]{Franck Gabriel \thanks{F.G. has been partly supported by the ERC SG CONSTAMIS.}}
\affil[1]{Institut de Math\'{e}matique de Toulouse; UMR5219; Universit\'{e} de Toulouse; CNRS; UPS, F-31062 Toulouse, FRANCE. }
\affil[2]{University of Sussex, School of Mathematical and Physical Sciences, BN1 9RH Brighton, United Kingdom.}
\affil[3]{\'Ecole Polytechnique F\'ed\'erale de Lausanne; Chair of Statistical Field Theory; 1015 Lausanne, SWITZERLAND. }
\begin{document}

\maketitle
\begin{abstract}The asymptotic freeness of independent unitarily invariant $N\times N$ random matrices holds in expectation up to $O(N^{-2})$. An already known consequence is the infinitesimal freeness in expectation. We put in evidence another consequence for unitarily invariant random matrices: the almost sure asymptotic freeness of type~$B$. As byproducts, we recover the asymptotic cyclic monotonicity, and we get the asymptotic conditional freeness. In particular, the eigenvector empirical spectral distribution of the sum of two randomly rotated random matrices converges towards the conditionally free convolution. 
We also show new connections between infinitesimal freeness, freeness of type $B$, conditional freeness, cyclic monotonicity and monotone independence.
Finally, we show rigorously that the BBP phase transition for an additive rank-one perturbation of a GUE matrix is a consequence of the asymptotic conditional freeness, and the arguments extend to the study of the outlier eigenvalues of other unitarily invariant ensembles.
\end{abstract}

\section{Introduction}

\subsection{Infinitesimal freeness for random matrices}\label{Intro:inf_freeness}

In various random matrix models, the convergence of averaged mixed moments is as fast as $O(N^{-2})$ when the dimension $N\rightarrow \infty$. In particular, this means that random matrices which are asymptotically free in the sense of Voiculescu~\cite{Voiculescu1991} are in fact freely independent up to an error of order $O(N^{-2})$, leading asymptotically to an occurrence of the \emph{infinitesimal freeness} of Belinschi and Shlyakhtenko \cite{Belinschi2012}, or equivalently of the related \emph{freeness of type $B$} of Biane, Goodman and Nica~\cite{Biane2003}. For example, the pioneer result of Thorbjornsen in~\cite{Thorbjornsen2000}, which shows that the convergence of mixed moments of independent GUE matrices is in $O(N^{-2})$, can be rephrased as asymptotic infinitesimal freeness of independent GUE matrices.

More generally, asymptotic infinitesimal freeness can be established from first-order expansions of proofs about asymptotic freeness of unitarily invariant random matrices. This first-order expansion is proven  in~\cite[Theorem~5.11]{Curran2011}, showing that \emph{bounded} deterministic matrices become asymptotically infinitesimal free \textit{in expectation} when randomly rotated by independent Haar unitary matrices. In~\cite{Shlyakhtenko2018}, using a different line of arguments, including concentration results on large unitary groups
, Shlyakhtenko gave a short elegant proof of  asymptotic infinitesimal freeness  in expectation of a family of unitarily invariant matrices from a tuple of \emph{finite rank matrices}, and,   proved besides, with  explicit Gaussian computations, the asymptotic infinitesimal freeness  \emph{in expectation} of a family of independent GUE matrices from a tuple of \emph{finite rank matrices}. This result has been generalized in three directions: Collins, Hasebe and Sakuma proved in~\cite{Collins2018} the \emph{almost sure} asymptotic infinitesimal freeness of a family of unitarily invariant matrices from a tuple of \textit{finite rank matrices}, which can also be stated asymptotically as \emph{cyclic monotone independence}; Dallaporta and F\'{e}vrier proved in~\cite[Appendix A]{Dallaporta2019} that a family of independent GUE matrices is asymptotically infinitesimal free \textit{in expectation} from a tuple of bounded deterministic matrices converging in $*$-distribution; and Au proved in~\cite{au2021finite} the asymptotic infinitesimal freeness \emph{in expectation} of a family of Wigner matrices with finite moments from a tuple of \emph{finite rank matrices}, and of a family of periodically banded GUE matrix from a tuple of \emph{finite rank matrices}. Finally, the asymptotic infinitesimal freeness \emph{in expectation} of two independent random matrices, at least one of them being unitarily invariant, is also a consequence of the general theory of \emph{surfaced free probability} of Borot, Charbonnier, Garcia-Failde, Leid and Shadrin in~\cite{Borot2021} (it corresponds to $(\tfrac{1}{2},1)$-freeness in \cite[Theorem 4.28]{Borot2021}).

We shall recall how the proof given by Biane in~\cite[Section 9]{Biane1998} about asymptotic freeness of randomly rotated random matrices is sufficiently robust to give freeness of order $O(N^{-2})$ for matrices which are bounded in non-commutative distribution. 
  This result is stated in~Theorem~\ref{Mainth}. A second proof is given in Section \ref{section:matricial_cumulants} by providing a novel expansion of the \emph{matricial cumulants}: this yields an explicit and complete expansion in powers of $N^{-2}$ of the mixed moments of independent and unitarily invariant random matrices from which we recover the freeness of order $O(N^{-2})$.
  
In Corollary~\ref{coro: infFree Expect}, we deduce from it that randomly rotated matrices of arbitrary rank yields asymptotically \emph{infinitesimal freeness} in expectation. Using classical concentration estimates, we also infer in~Theorem~\ref{th:free_type_B} that \emph{freeness of type $B$} occurs asymptotically \emph{almost surely} on certain ideals, opening the door to the use of infinitesimal freeness for almost sure results, as we explain now. 

\subsection{Conditional freeness for random matrices} 

For the definition of the different notions of independence, the reader is referred to Section \ref{sec:cFree}. In~\cite{Collins2018}, Collins, Hasebe and Sakuma introduced the \emph{cyclic monotone independence}, which appears to be a consequence of infinitesimal freeness, or of freeness of type $B$, when we restrict our attention to some ideals where the normalized trace is trivial (see \cite[Proposition 3.10]{Collins2018}). On the other hand, \emph{cyclic monotone independence} reduces immediately to the \emph{monotone independence} of Muraki~\cite{Muraki2001} when we look at the distribution with respect to some vector state (see Proposition~\ref{Prop:mon}). We complete the picture by putting the \emph{conditional freeness} of Bo\.{z}ejko, Leinart and Speicher~\cite{Bozejko1996} into the following diagram, where horizontal implication are obtained by restricting to ideals where the normalized trace is trivial, and vertical implications are obtained by looking at the distribution with respect to particular vector states.
$$\xymatrix{
    **{[F]+}\text{Freeness of type }B \ar@{=>}[r]^-*++{\mbox{Proposition~\ref{from_typeB_to_mon}}} \ar@{=>}[d]_-*+{\mbox{Proposition~\ref{Prop:cfreeness}}} & **{[F]+}\text{Cyclic monotone independence} \ar@{=>}[d]^-*+{\mbox{Proposition~\ref{Prop:mon}}} \\
  **{[F]+}\text{Conditional freeness}  \ar@{=>}[r]_-*++{\mbox{Proposition~\ref{From_cfree_to_mon}}} & **{[F]+}\text{Monotone independence}
  }$$
The right vertical implication is quasi-immediate. As explained before, the upper horizontal implication is essentially contained in~\cite[Proposition 3.10]{Collins2018}. Also, the lower horizontal implication was already proved by Franz in~\cite[Proposition 3.1]{Franz2005}. The last implication (conditional freeness from freeness of type $B$) is new, even if other links between conditional freeness and freeness of type $B$ have been explored before (see~\cite{Belinschi2012} and~\cite{Fevrier2019}).

As freeness of type $B$ appears asymptotically for randomly rotated deterministic matrices, the diagram above allows us to produce various asymptotic independences for random matrices (both in expectation and almost surely). In particular, we recover in Theorem~\ref{th:cyclic_monotone} the result of \cite[Theorem 4.3]{Collins2018} about asymptotic cyclic monotone independence, we derive its corollary about asymptotic monotone independence in Corollary~\ref{th:mon}, and we exhibit a new phenomenon in~Theorem~\ref{th:conditionally_free}: randomly rotated deterministic matrices are asymptotically conditionally free. 

At the level of empirical spectral distribution, it means that conditional free convolution appears naturally in large dimension. More precisely, let $(A_N)_{N\geq 1}$ be a deterministic sequence  of $N\times N$ hermitian matrices, and $(v_N)_{N\geq 1}$ be a sequence of unit vectors of $\mathbb{C}^N$. The \emph{empirical spectral distribution} (ESD) of $A_N$ is 
$$\mu_{A_N}= \frac{1}{N} \sum_{\lambda \text{ eigenvalue of } A_N}\delta_{\lambda}$$
and the spectral distribution of $A_N$ with respect to the vector state given by $v_N$ is
$$\mu_{A_N}^{v_N}=\sum_{\substack{u_\lambda \text{unit-norm eigenvector of } A_N\\
\text{ associated to the eigenvalue }\lambda}}|\langle v_N,u_\lambda \rangle|^2\cdot \delta_{\lambda}.$$
The measure $\mu_{A_N}^{v_N}$ is sometimes called an \emph{eigenvector empirical spectral distribution} (VESD) of $A_N$, or the \emph{local density of the state} $v_N$.

In the following theorem, we use the free convolution $\mu_1\boxplus \mu_2$ and the conditionally free convolution $\nu_1\prescript{}{\mu_1}{\boxplus}^{}_{\mu_2} \mu_2$ which are defined in Section~\ref{Sec:convolutions}. Moreover, we say that a sequence of probability measures $(\mu_N)_{N\geq 1}$ converges to $\mu$ in moment whenever $\lim_N\int_{\mathbb{R}}x^kd \mu_N(x)=\int_{\mathbb{R}}x^kd \mu(x)$ for any $k\geq 0$.
\begin{theorem}\label{Th:sum}
Let $(A_N)_{N\geq 1}$ and $(B_N)_{N\geq 1}$ be two deterministic sequences  of $N\times N$ hermitian matrices (respectively real symmetric matrices), and $(v_N)_{N\geq 1}$ be a sequence of unit vectors of $\mathbb{C}^N$ (resp. of $\mathbb{R}^N$). We assume that almost surely, as $N$ tends to $\infty$, the following convergences hold in moment:
$$\mu_{A_N}\to \mu_1,\ \ \mu_{A_N}^{v_N}\to \nu_1\ \  \text{and}\ \ \mu_{B_N}\to \mu_2.$$
Let $(U_N)_{N\geq 1}$ be a sequence of Haar distributed unitary (resp. orthogonal) random matrices of size $N$. As $N$ tends to $\infty$, almost surely the following convergences hold in moment:
\begin{equation}\mu_{A_N+U_NB_NU_N^*}\to \mu_1\boxplus \mu_2\ \  \text{and}\ \ \mu_{A_N+U_NB_NU_N^*}^{v_N}\to \nu_1\prescript{}{\mu_1}{\boxplus}^{}_{\mu_2} \mu_2.\label{conv_cond_free}\end{equation}
%
\end{theorem}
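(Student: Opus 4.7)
The plan is to deduce the statement from the asymptotic conditional freeness of randomly rotated matrices announced as Theorem~\ref{th:conditionally_free}, working inside the two-state non-commutative probability space $(M_N(\C),\tau_N,\psi_N)$ with $\tau_N=\tfrac{1}{N}\Tr$ and $\psi_N(X)=\langle v_N,Xv_N\rangle$. Under this identification $\mu_{A_N+U_NB_NU_N^*}$ is the $\tau_N$-distribution of $A_N+U_NB_NU_N^*$ and $\mu_{A_N+U_NB_NU_N^*}^{v_N}$ is its $\psi_N$-distribution, so the statement amounts to a joint convergence of the two states on arbitrary words in $A_N$ and $U_NB_NU_N^*$.

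The first step is to check that all four marginal distributions converge. For $A_N$ the hypothesis directly gives $\tau_N$-convergence to $\mu_1$ and $\psi_N$-convergence to $\nu_1$. For $U_NB_NU_N^*$, trace invariance yields $\tau_N((U_NB_NU_N^*)^k)=\tau_N(B_N^k)\to\int x^k\,d\mu_2(x)$. The non-obvious marginal is the almost sure convergence of $\mu_{U_NB_NU_N^*}^{v_N}$ to $\mu_2$. Writing
\[\psi_N\bigl((U_NB_NU_N^*)^k\bigr)=\langle w_N,B_N^kw_N\rangle\quad\text{with}\quad w_N:=U_N^*v_N\]
uniformly distributed on the unit sphere of $\C^N$, I would apply Gromov--L\'evy concentration to the quadratic form $w\mapsto\langle w,B_N^kw\rangle$, whose mean on the sphere equals $\tau_N(B_N^k)\to\int x^k\,d\mu_2(x)$. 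Combined with the variance bound $\operatorname{Var}\langle w,Mw\rangle=O(\|M\|_F^2/N^2)$, controlled by $\tau_N(B_N^{2k})$ which is bounded thanks to the moment convergence of $\mu_{B_N}$, a routine Borel--Cantelli delivers the almost sure convergence for each fixed $k$.

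With all four marginals converging, Theorem~\ref{th:conditionally_free} gives almost sure asymptotic conditional freeness of $A_N$ and $U_NB_NU_N^*$ with respect to $(\tau_N,\psi_N)$. By the very definitions recalled in Section~\ref{Sec:convolutions}, conditional freeness together with the limiting marginal data determines every mixed moment asymptotically through the free and conditional free cumulant machinery; specialising to powers of the sum identifies the $\tau_N$-limit of the ESD as $\mu_1\boxplus\mu_2$ and the $\psi_N$-limit of the VESD as $\nu_1\prescript{}{\mu_1}{\boxplus}^{}_{\mu_2}\mu_2$, which is exactly~\eqref{conv_cond_free}. The real symmetric case is handled identically, replacing Theorem~\ref{th:conditionally_free} and its Haar unitary input by the orthogonal analogue. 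The genuine difficulty is entirely concentrated in Theorem~\ref{th:conditionally_free}; beyond that, the only additional ingredient required is the spherical concentration of Step~2, whose only subtlety is the absence of any operator norm hypothesis on $B_N$, circumvented by controlling variances via Frobenius norms and $\tau_N(B_N^{2k})$.
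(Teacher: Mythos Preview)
Your approach is essentially the paper's: reduce to Theorem~\ref{th:conditionally_free} and read off the limits via the definitions of free and conditionally free convolution. The one difference is your Step~2, the spherical-concentration argument showing $\mu_{U_NB_NU_N^*}^{v_N}\to\mu_2$ almost surely. This step is superfluous: Theorem~\ref{th:conditionally_free} does \emph{not} require the $\psi_N$-marginal of $U_NB_NU_N^*$ as an input --- its hypotheses are only the convergence of $\tau_{A_N}$, $\tau_{A_N}^{v_N}$, and $\tau_{B_N}$ --- and it \emph{concludes} that $\varphi_{|\mathcal{A}_2}=\tau_{|\mathcal{A}_2}$, which is precisely the statement $\nu_2=\mu_2$ you work to establish separately. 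The paper's proof accordingly skips this step entirely.

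As written, your concentration argument for this unnecessary step is also slightly loose: the Lipschitz constant of $w\mapsto\langle w,B_N^kw\rangle$ on the sphere is of order $\|B_N^k\|_{\mathrm{op}}$, which is not controlled by the moment hypotheses alone, so Gromov--L\'evy does not apply directly; and the variance bound $O(1/N)$ by itself only yields Chebyshev tails, which are not summable. One would need to bound higher moments of $\langle w,B_N^kw\rangle-\tau_N(B_N^k)$ (all expressible via $\tau_N(B_N^j)$) to run Borel--Cantelli. But since the step is not needed, this does not affect the validity of your overall argument.
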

While the first convergence to $\mu_1\boxplus \mu_2$ is the well-known asymptotic freeness of Voiculescu (see \cite{Speicher1993} for the almost sure convergence), the other convergence to $ \nu_1\prescript{}{\mu_1}{\boxplus}^{}_{\mu_2} \mu_2$ seems unknown. The Cauchy transform of $ \nu_1\prescript{}{\mu_1}{\boxplus}^{}_{\mu_2} \mu_2$ is given in Lemma~\ref{lemma_conv}. Note in particular that if $\mu_1=\delta_0$ (for example if $A_N$ has a bounded rank), then \eqref{conv_cond_free} reduces to the convergence
$$\mu_{A_N+U_NB_NU_N^*}^{v_N}\to \nu_1\prescript{}{\delta_0}{\boxplus}^{}_{\mu_2} \mu_2=\nu_1 \rhd \mu_2,$$
where $\rhd$ denote the monotone convolution of two measures.

Note that $\mu_{A_N+U_NB_NU_N^*}^{v_N}$ is the distribution of $A_N+U_NB_NU_N^*$ with respect to the state $\langle \cdot\ v_N,v_N\rangle$. More generally, if we are interested in the limit of $\mu_{P(A_N,U_NB_NU_N^*)}$ and $\mu_{P(A_N,U_NB_NU_N^*)}^{v_N}$ for an arbitrary polynomial $P\in \mathbb{C}\langle X,Y\rangle$ in two non-commuting variables such that $P(A_N,U_NB_NU_N^*)$ is self-adjoint, it is possible to use the asymptotic conditional freeness of $(A_N)_{N\geq 1}$ and $(U_NB_NU_N^*)_{N\geq 1}$  with respect to $\left(\frac{1}{N}\Tr,\langle \cdot\ v_N,v_N\rangle\right)$ in the following sense (see Theorem~\ref{th:conditionally_free} for more than two matrices).
\begin{theorem}\label{Th:poly}Under the assumptions of Theorem~\ref{Th:sum}, we have\begin{itemize}
\item for all $P\in \mathbb{C}\langle X,Y\rangle$, 
$ \frac{1}{N}\Tr(P(A_N, U_NB_NU_N^*))=\tau(P)+o(1)$ almost surely,
\item for all $P\in \mathbb{C}\langle X,Y\rangle$, 
$\langle P(A_N, U_NB_NU_N^*)v_N,v_N\rangle=\varphi(P)+o(1)$ almost surely,
\item $\mathbb{C}\langle X\rangle$ and $\mathbb{C}\langle Y\rangle$ are conditionally free with respect to $(\tau,\varphi)$.
\end{itemize}
\end{theorem}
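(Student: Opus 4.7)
The plan is to recognize Theorem~\ref{Th:poly} as the two-variable specialization of the more general Theorem~\ref{th:conditionally_free}: once the single-variable distributional hypotheses are verified in both states $\tau_N := \frac{1}{N}\Tr$ and $\varphi_N := \langle \cdot\, v_N, v_N\rangle$, that theorem delivers all three bulleted conclusions. The structural content --- asymptotic conditional freeness --- is entirely packaged into Theorem~\ref{th:conditionally_free}, which itself rests on the $O(N^{-2})$ estimate of Theorem~\ref{Mainth} via the implication ``freeness of type~$B$ $\Rightarrow$ conditional freeness'' recorded in Proposition~\ref{Prop:cfreeness}. What remains here is the marginal check plus an almost-sure upgrade.

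The sequence $A_N$ converges to $\mu_1$ in $\tau_N$ and to $\nu_1$ in $\varphi_N$ by hypothesis. For $U_N B_N U_N^*$, unitary invariance of the normalized trace gives
$$\tau_N\bigl((U_N B_N U_N^*)^k\bigr) = \frac{1}{N}\Tr(B_N^k) \to \int x^k\, d\mu_2(x),$$
and, for the vector state, setting $w_N := U_N^* v_N$ (Haar-uniform on the unit sphere of $\mathbb{C}^N$),
$$\varphi_N\bigl((U_N B_N U_N^*)^k\bigr) = \langle B_N^k w_N, w_N\rangle$$
has mean $\frac{1}{N}\Tr(B_N^k)$ and variance $O(N^{-1})$, the latter being controlled by the bounded moment $\frac{1}{N}\Tr(B_N^{2k})$. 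Borel--Cantelli then delivers the almost-sure limit $\int x^k\, d\mu_2(x)$, so $U_N B_N U_N^*$ has marginal distribution $\mu_2$ in both states. With these marginals in place, Theorem~\ref{th:conditionally_free} applies and asserts that $(A_N)$ and $(U_N B_N U_N^*)$ are asymptotically conditionally free with respect to $(\tau_N,\varphi_N)$, almost surely. Consequently every mixed moment $\tau_N(P(A_N,U_NB_NU_N^*))$ and $\varphi_N(P(A_N,U_NB_NU_N^*))$ admits an almost-sure limit, which we denote $\tau(P)$ and $\varphi(P)$; the resulting pair $(\tau,\varphi)$ on $\mathbb{C}\langle X,Y\rangle$ is uniquely determined by the marginals $(\mu_1,\nu_1)$ on $\mathbb{C}\langle X\rangle$, the marginals $(\mu_2,\mu_2)$ on $\mathbb{C}\langle Y\rangle$, and the conditional freeness of these two subalgebras, which is exactly the content of the three bullets.

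The main obstacle is Theorem~\ref{th:conditionally_free} itself; granting it, the almost-sure upgrade from convergence in expectation is routine. For each polynomial $P$, the maps $U \mapsto \tau_N(P(A_N, U B_N U^*))$ and $U \mapsto \varphi_N(P(A_N, U B_N U^*))$ have Hilbert--Schmidt Lipschitz constants controlled by the operator norms of $A_N$ and $B_N$ (handled by a truncation argument if only moment convergence is assumed), so Gromov--Milman concentration for the Haar measure on $U(N)$ combined with a Borel--Cantelli summation promotes the convergence in expectation supplied by Theorem~\ref{th:conditionally_free} to the almost-sure statements claimed in the first two bullets.
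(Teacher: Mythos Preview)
Your approach matches the paper's: Theorem~\ref{Th:poly} is simply the $k=\ell=1$ specialization of Theorem~\ref{th:conditionally_free}, and the marginal hypotheses there follow immediately from the moment convergences $\mu_{A_N}\to\mu_1$, $\mu_{A_N}^{v_N}\to\nu_1$, $\mu_{B_N}\to\mu_2$ assumed in Theorem~\ref{Th:sum}. Note, however, that your second and third paragraphs are superfluous: Theorem~\ref{th:conditionally_free} only asks for convergence of $\tau_{\mathbf{B}_N}$ (not of the vector-state distribution of $U_NB_NU_N^*$, which is a \emph{conclusion} via $\varphi_{|\mathcal{A}_2}=\tau_{|\mathcal{A}_2}$), and its ``Furthermore'' clause already provides the almost-sure convergence when $A_N,B_N$ are self-adjoint, so no separate concentration/Borel--Cantelli argument is needed at this stage.
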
For example, the limiting eigenvector empirical spectral distribution of the product can be computed thanks to~\cite{popa2011multiplicative}.

At this stage, let us give an explanation of the usefulness of infinitesimal freeness, or of freeness of type $B$, in order to study the distribution with respect to the state $\langle \cdot\ v_N,v_N\rangle$. Because $\langle \cdot\ v_N,v_N\rangle=\Tr(v_Nv_N^* \cdot)$, the computation of $\langle \cdot\ v_N,v_N\rangle$ corresponds to the computation of the \emph{unnormalized} trace, which is made possible thanks to the asymptotic freeness of type $B$.

Finally, note the following consequence of Theorem~\ref{Th:poly}: if $\mu_1=\delta_0$ (for example if $A_N$ has a bounded rank), then $(A_N,B_N)$ is asymptotically monotone independent with respect to $\langle \cdot\ v_N,v_N\rangle$, as stated in Corollary~\ref{th:mon}.

\subsection{Related works}
If one is interested by the squared overlaps $|\langle v,u\rangle|^2$ between the eigenvectors $v$ of $A_N$ and the eigenvectors $u$ of $A_N+U_NB_NU_N^*$, it is useful to take $v_N$ as one of the eigenvectors of $A_N$, and study the local density $\mu_{A_N+U_NB_NU_N^*}^{v_N}$ of the state $v_N$,
as proposed by Allez and Bouchaud in~\cite{allez2014}.  If $\theta_N$ denotes the eigenvalue  associated to the eigenvector $v_N$ of $A_N$, the convergence of $\theta_N$ to $\theta$ is equivalent to the weak convergence of $\mu_{A_N}^{v_N}=\delta_{\theta_N}$ to a Dirac mass $\delta_\theta$, and our result \eqref{conv_cond_free} reduces to the convergence
$$\mu_{A_N+U_NB_NU_N^*}^{v_N}\to \delta_{\theta}\prescript{}{\mu_1}{\boxplus}^{}_{\mu_2} \mu_2.$$
In the case where $U_NB_NU_N^*$ is replaced by a matrix $X_N$ from the GUE, the measure $\mu_{X_N}$ converges to the semicircular distribution $\mu_{sc}$ as $N$ tends to $\infty$, and the weak convergence
$$\mathbb{E}\left[\mu_{A_N+X_N}^{v_N}\right]\to \delta_{\theta}\prescript{}{\mu_1}{\boxplus}^{}_{\mu_{sc}} \mu_{sc}$$
can also be deduced from \cite[Theorem 4.3]{allez2014} whenever $\theta \in supp(\mu_1)$. There exists a similar result in~\cite{ledoit2011} if one takes the product of $A_N$ with a Wishart matrix (see also the related works \cite{bun2016,bun2017,bun2018,bun2018optimal,BenaychGuionnetMaida,benaych2019empirical,potters2020,burda2021cleaning,benaych2022short}). As shown by Noiry in~\cite[Corollary 1]{Noiry2019}, this convergence holds more generally if $X_N$ is a Wigner matrix: 
as $N$ tends to $\infty$, we have in probability the weak convergence 
$$\mu_{A_N+X_N}^{v_N}\to \delta_{\theta}\prescript{}{\mu_1}{\boxplus}^{}_{\mu_{sc}} \mu_{sc}.$$
It is in fact a consequence of local laws estimates of Knowles and Yin~\cite{knowles2014outliers}. More generally, \emph{local laws estimates} for a random matrix model give information about the local density of states, and Theorem~\ref{Th:sum} can also be deduced from the local laws estimates for the model $A_N+U_NB_NU_N^*$ which are under study in \cite{Kargin2015} by Kargin, and also in a series of paper of Bao, Erd\H{o}s and Schnelli \cite{bao2016local,bao2017convergence,bao2017local,bao2020spectral}. See \cite{ding2020local} for a multiplicative analogue, and \cite{ho2022local} for a polynomial analogue. Alternatively, Theorem~\ref{Th:sum} is also an indirect consequence of~\cite[Proposition 5.3]{belinschi2017} in the case where $v_N$ is an eigenvector of $A_N$.

In the particular case where $A_N=0$, we have $\nu_1=\mu_1=\delta_0$, and the convergences~\eqref{conv_cond_free} reduce to the convergences
\begin{equation}\label{eq:equalityESDandVESD}\mu_{U_NB_NU_N^*}\to \mu_2\ \  \text{and}\ \ \mu_{U_NB_NU_N^*}^{v_N}\to \mu_2,
\end{equation}
which can be checked directly by a simple computation, using the unitary invariance. In this particular case, it is interesting to study the difference between the rate of convergence of the empirical spectral distribution $\mu_{U_NB_NU_N^*}$ (ESD) to $\mu_2$ and the rate of convergence of the eigenvector empirical spectral distribution $\mu_{U_NB_NU_N^*}^{v_N}$ (VESD) to $\mu_2$, not only for the random matrix $U_NB_NU_N^*$ but also for other random matrix models. In this direction, note the works of \cite{bai2007asymptotics,xia2013convergence,xia2019convergence,xi2020convergence} about quantitative estimates for the Kolmogorov distance and the works \cite{gamboa2011large,gamboa2016sum,gamboa2019sum,gamboa2022sum,noiry2020large} about large deviations estimates.
\subsection{Outlier eigenvalues} Studying explicitly the implication of infinitesimal freeness for empirical spectral measures, Shlyakthenko gave in~\cite{Shlyakhtenko2018} a beautiful potential interpretation of asymptotic infinitesimal freeness in terms of outliers of arbitrary polynomials in these matrices and finite rank perturbations. It raises the following question \cite[Problem 4.6]{Collins2018}:  is it possible, from the asymptotic infinitesimal freeness, to rigorously prove the BBP phase transition phenomena of outliers found by Baik, Ben Arous and P\'{e}ch\'{e} \cite{BBP,Peche}, and more generally by Benaych-Georges and Nadakuditi \cite{MR2782201,MR2944410} and Belinschi, Bercovici, Capitaine and F\'{e}vrier \cite{belinschi2017}?

We argue that the answer is positive: the almost sure location of the outliers in large dimension can be derived from the asymptotic freeness of type $B$ in the case of additive perturbation of arbitrary rank.  In order to enlighten the main idea of this derivation, let us recall the new and relatively simple approach of Noiry~\cite{Noiry2019} about outliers which allows to answer positively to this question in the particular case of a rank-one perturbation of a GUE matrix. In the following, we make use of the argument of Noiry and the asymptotic cyclic monotone independence of Collins, Hasebe and Sakuma \cite{Collins2018}, which is in some sense an almost sure version of asymptotic infinitesimal freeness.

Let $B_N$ be a GUE matrix. From~\cite{furedi1981}, we know that, not only does $\mu_{B_N}$ converge weakly to a probability measure $$\mu_{sc}=\frac{1}{2\pi}\sqrt{4-x^2}1_{|x|\leq 2}d x$$ called the semicircular law (Wigner's theorem), but also, the largest eigenvalue converges to $2$ almost surely and the least eigenvalue converges to $-2$ almost surely. Let $\theta \in \mathbb{R}$, $v_N\in \mathbb{C}^N$ be a unit vector and $A_N$ be the rank-one matrix $A_N=v_Nv_N^*$. We want to understand the outliers of the rank-one deformation
$B_N+\theta A_N.$

The almost sure asymptotic cyclic monotone independence of the pair $(A_N,B_N)$ is ensured by \cite[Theorem 4.3]{Collins2018}. One direct consequence (see e.g. Proposition~\ref{Prop:mon}) is the fact that $(A_N,B_N)$ is a pair which is asymptotically monotone independent with respect to the vector state $$M_N\mapsto \Tr(A_N M_N)=\langle M_N v_N,v_N\rangle.$$In particular, using Equation (\ref{eq:equalityESDandVESD}), we have almost surely the following weak convergence of the spectral distribution with respect to the vector state
\begin{equation}\lim_{N\to \infty}\mu^{v_N}_{B_N+\theta A_N}= \left(\lim_{N\to \infty}\mu^{v_N}_{\theta A_N}\right)\rhd\left(\lim_{N\to \infty}\mu^{v_N}_{B_N}\right)=\delta_\theta \rhd \mu_{sc}.\label{Noiry}\end{equation}
One can compute
\begin{equation*}
\delta_\theta \rhd \mu_{sc}=\frac{\sqrt{4-x^2}}{2\pi(\theta^2+1-\theta x)}1_{|x|\leq 2}d x +1_{|\theta|>1}\left(1-\frac{1}{\theta^2}\right)\delta_{\theta+\frac{1}{\theta}}\label{eq:LimitMeasureGUE}
\end{equation*}
by the Stieltjes-Perron inversion formula (see \cite{Biane1998b} or \cite{Noiry2019}). In fact, the convergence of $\mu^{v_N}_{B_N+\theta A_N}$ to $\delta_\theta \rhd \mu_{sc}$ is a result which has been first proved by Noiry~\cite[Proposition 2]{Noiry2019}. By conditioning on the spectrum of $B_N$, it is also a consequence of Theorem~\ref{Th:sum}, because $\delta_\theta \rhd \mu_{sc}=\delta_{\theta}\prescript{}{\delta_0}{\boxplus}^{}_{\mu_{sc}} \mu_{sc}$ by Lemma~\ref{lemma_conv}.

In order to conclude, we follow the argument of Noiry in \cite{Noiry2019}. The atom in the limiting distribution $\mu^{v_N}_{B_N+\theta A_N}$ means that, if $|\theta| >1$, there is at least one eigenvalue of $B_N+\theta A_N$ close to $\theta+\frac{1}{\theta}$ almost surely. Now, $A_N$ being of rank one, the eigenvalues of $B_N+\theta A_N$ interlace with the eigenvalues of $B_N$. More precisely, denoting by
$\lambda_1(\theta)\geq \lambda_2(\theta) \geq \ldots \geq \lambda_N(\theta)$
the eigenvalues of $B_N+\theta A_N$, we have
$$\lambda_1(\theta)\geq \lambda_1(0)\geq  \lambda_2(\theta) \geq \lambda_2(0) \geq \ldots \geq \lambda_N(\theta)\geq \lambda_N(0)$$
if $\theta >0$, and
$$\lambda_1(0)\geq \lambda_1(\theta)\geq  \lambda_2(0) \geq \lambda_2(\theta) \geq \ldots \geq \lambda_N(0)\geq \lambda_N(\theta)$$
if $\theta <0$. Because $\lim_{N\to \infty}\lambda_1(0)=2$ and $\lim_{N\to \infty}\lambda_N(0)=-2$ almost surely, we obtain as a consequence the following BBP phase transition  for rank-one deformation of a GUE matrix, originally due to \cite{Peche} (discovered in \cite{BBP} for Wishart matrices):
\begin{itemize} 
 \item If $\theta> 1$, there is one and only one outlier, which converges to $\theta+\frac{1}{\theta}>2$:
 
$\displaystyle \lim_{N\to \infty}\lambda_1(\theta)=\theta+\frac{1}{\theta}$, $\displaystyle \lim_{N\to \infty}\lambda_2(\theta)=2$ and $\displaystyle \lim_{N\to \infty}\lambda_N(\theta)=-2$ almost surely. 
  \item If $\theta< -1$, there is one and only one outlier, which converges to $\theta+\frac{1}{\theta}<-2$:
  
  $\displaystyle \lim_{N\to \infty}\lambda_1(\theta)=2$, $\displaystyle \lim_{N\to \infty}\lambda_{N-1}(\theta)=-2$ and $\displaystyle \lim_{N\to \infty}\lambda_N(\theta)=\theta+\frac{1}{\theta}$ almost surely.
\end{itemize}
 Moreover, denoting by $u_N$ the eigenvector associated to the outlier eigenvalue, we have  almost surely
$$ \lim_{N\to \infty} |\< u_N, v_N\>|^2= 1-\frac 1{\theta^2}$$
which was originally proved in~\cite{MR2782201}. For $|\theta|\leq 1$, we remark that the functions $\theta\mapsto  \lambda_1(\theta)$ and $\theta\mapsto  \lambda_N(\theta)$ are non-decreasing, and obtain
\begin{itemize} 
 \item 
 If $|\theta|\leq 1$, there are no outliers:
 
$\displaystyle \lim_{N\to \infty}\lambda_1(\theta)=2$ and $\displaystyle \lim_{N\to \infty}\lambda_N(\theta)=-2$ almost surely.
\end{itemize}
Note that the proof of \cite[Theorem 4.3]{Collins2018} is done by the method of moment, which means that we just derived rigorously the location of outliers by moment method. 
The previous argument of Noiry carries over to the general case of unitarily invariant random matrices, using~\eqref{conv_cond_free} instead of \eqref{Noiry}. We recover the following result for outliers of deformation of unitary invariant matrices, proved by Benaych-Georges and Nadakuditi in~\cite{MR2782201} for the case of finite rank deformation, and by Belinschi, Bercovici, Capitaine and F\'{e}vrier in~\cite{belinschi2017} for the general case (see also~\cite{Capitaine2018} for a general review). We need to use the subordination function $\omega_1$ of $\mu_1$ with respect to $\mu_1\boxplus \mu_2$, defined in Section~\ref{Sec:convolutions}.
\begin{theorem}\label{Th:outliers}
Let $(A_N)_{N\geq 1}$ and $(B_N)_{N\geq 1}$ be two deterministic sequences  of $N\times N$ hermitian matrices (respectively real symmetric matrices). We assume that almost surely, as $N$ tends to $\infty$, the following convergences hold in moment:
$$\mu_{A_N}\to \mu_1\ \   \text{and}\ \ \mu_{B_N}\to \mu_2,$$
where $\mu_1$ and $\mu_2$ are compactly supported. We assume moreover the existence of an \emph{outlier eigenvalue} of $A_N$ in the following sense: a sequence of eigenvalues $\theta_N\in Sp(A_N)$ such that, almost surely,
$$\lim_{N\to \infty}\theta_N= \theta\notin supp(\mu_1).$$
Let $(U_N)_{N\geq 1}$ be a sequence of Haar distributed unitary (resp. orthogonal) random matrices of size $N$. Then, for all $\rho \notin supp(\mu_1\boxplus \mu_2)$ such that $\omega_1(\rho)=\theta$,
\begin{itemize}
\item we have an \emph{outlier eigenvalue} of $A_N+U_NB_NU_N^*$ around $\rho$ in the following sense: there exists a sequence of eigenvalues $\rho_N\in Sp(A_N+U_NB_NU_N^*)$ such that, almost surely,
$$\lim_{N\to \infty} \rho_N=\rho\notin supp(\mu_1\boxplus \mu_2);$$
\item for all open intervals $I \subset \mathbb{R}\setminus supp(\mu_1\boxplus \mu_2)$ containing $\rho$, denoting by $v_N$ an eigenvector of $A_N$ associated to $\theta$ and by $E_N$ the set of eigenvectors of $A_N+U_NB_NU_N^*$ associated to eigenvalues in $I$, we have almost surely
$$\lim_{N\to \infty}\sum_{u\in E_N}\left|\langle u,v_N\rangle \right|^2 = \frac{1}{\omega'_1(\rho)}.$$
\end{itemize} 
\end{theorem}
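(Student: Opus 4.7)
I would follow the strategy sketched in the introduction for the rank-one GUE case, replacing the monotone convolution by the conditionally free convolution. Let $v_N\in\mathbb{C}^N$ be a unit eigenvector of $A_N$ associated to $\theta_N$, so that $\mu_{A_N}^{v_N}=\delta_{\theta_N}$ converges almost surely to $\delta_\theta$ in moment. Theorem~\ref{Th:sum} applied with $\nu_1=\delta_\theta$ then yields, almost surely,
$$
\mu_{A_N+U_NB_NU_N^*}^{v_N} \longrightarrow \mu_\infty:=\delta_\theta \prescript{}{\mu_1}{\boxplus}^{}_{\mu_2} \mu_2
$$
in moment. Since $\mu_\infty$ is compactly supported (hence determined by its moments) and moment convergence of probability measures forces uniform tightness via Markov's inequality, this upgrades to almost sure weak convergence.

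The next step is to describe $\mu_\infty$ outside $\mathrm{supp}(\mu_1\boxplus\mu_2)$. Substituting $G_{\delta_\theta}(w)=(w-\theta)^{-1}$ into Lemma~\ref{lemma_conv} gives
$$
G_{\mu_\infty}(z)=\frac{1}{\omega_1(z)-\theta}.
$$
Since $\theta\notin\mathrm{supp}(\mu_1)$, the subordination function $\omega_1$ admits an analytic extension to a neighbourhood of every $\rho\in\mathbb{R}\setminus\mathrm{supp}(\mu_1\boxplus\mu_2)$ with $\omega_1(\rho)=\theta$, and on the real axis outside $\mathrm{supp}(\mu_1\boxplus\mu_2)$ one has $\omega_1'>0$. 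By the Stieltjes inversion formula, the atoms of $\mu_\infty$ in this open set are exactly such points $\rho$, each of mass $1/\omega_1'(\rho)$, and $\mu_\infty$ carries no continuous part there.

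For the conclusion, fix $\rho$ as in the statement and choose an open interval $I\subset\mathbb{R}\setminus\mathrm{supp}(\mu_1\boxplus\mu_2)$ containing $\rho$ but isolating it from the other (necessarily isolated) roots of $\omega_1-\theta$, with endpoints that are not atoms of $\mu_\infty$. The portmanteau theorem applied to the weak convergence yields almost surely
$$
\sum_{u\in E_N}|\langle u,v_N\rangle|^2 \;=\; \mu_{A_N+U_NB_NU_N^*}^{v_N}(I) \;\longrightarrow\; \mu_\infty(I) \;=\; \frac{1}{\omega_1'(\rho)},
$$
which is the overlap claim. Positivity of this limit forces $E_N\neq\emptyset$ for $N$ large, and a diagonal extraction over intervals shrinking to $\{\rho\}$ produces a sequence $\rho_N\in\mathrm{Sp}(A_N+U_NB_NU_N^*)$ with $\rho_N\to\rho$, proving the existence of the outlier.

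The main obstacle is the analytic step: one has to verify the displayed formula for $G_{\mu_\infty}$ from the definition of the conditionally free convolution, justify the analytic extension of $\omega_1$ past $\mathrm{supp}(\mu_1\boxplus\mu_2)$ at the $\rho$ of interest, and exclude any singular continuous part of $\mu_\infty$ on $\mathbb{R}\setminus\mathrm{supp}(\mu_1\boxplus\mu_2)$. Granted these facts, which should follow directly from the material developed in Section~\ref{Sec:convolutions}, the moment-to-weak upgrade and the extraction of the outlier sequence are routine applications of tightness and the portmanteau theorem.
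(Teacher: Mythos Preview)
Your proposal is correct and follows essentially the same route as the paper: apply Theorem~\ref{Th:sum} with $v_N$ an eigenvector for $\theta_N$ so that $\nu_1=\delta_\theta$, upgrade moment convergence to weak convergence by compact support, compute $G_{\mu_\infty}(z)=(\omega_1(z)-\theta)^{-1}$ via Lemma~\ref{lemma_conv}, and read off the unique atom at $\rho$ of mass $1/\omega_1'(\rho)$ by Stieltjes inversion. The only cosmetic difference is that the paper disposes of your ``isolating'' restriction on $I$ directly: since $\omega_1$ maps $\mathbb{C}^+$ to $\mathbb{C}^+$ and is real on $\mathbb{R}\setminus\mathrm{supp}(\mu_1\boxplus\mu_2)$, it is strictly increasing on any such interval $I$, hence $\rho$ is automatically the only zero of $\omega_1-\theta$ in $I$ and the endpoints are never atoms; this immediately gives the overlap statement for \emph{every} admissible $I$, and the existence of $\rho_N\to\rho$ follows without a diagonal extraction.
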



\subsection{Organization of the paper}
The paper is organized as follows. In Section \ref{sec:cFree}, we discuss the algebraic framework that will later be useful in the paper. Several notions of freeness are reviewed, and we clarify and further study the connections among them. In Subsection \ref{Sec:convolutions}, we present the analytic tools used to compute convolutions for the different notions of freeness, namely the free, conditional free, and monotone convolutions.

In Section \ref{sec:asympfree}, we state the freeness property up to an $O(N^{-2})$ error that yields infinitesimal freeness of independent and randomly rotated matrices. This result is generalized in Section  \ref{sec:a.s.freeB} so as to provide an almost-sure version, stated in terms of asymptotic freeness of type B. Subsequently, various consequences of the algebraic framework developed in the previous section are stated. Mainly, in Theorem \ref{th:cond_freeness}, we obtain our new main corollary: unitarily invariant random matrices are asymptotically conditionally free (for the normalized trace and a deterministic state).

As a result of this new phenomenon, we can prove in Section \ref{sec:eigenvalues} the theorems from the introduction on the spectral distribution of the sum of two independent unitarily invariant matrices (Theorem \ref{Th:sum}), and the location of outliers in additive perturbations of a unitary invariant matrix by a matrix having an outlier (Theorem \ref{Th:outliers}).

In Section \ref{proof_of_Mainth}, we provide two proofs of the asymptotic freeness up to order $O(N^{-2})$ of independent and randomly rotated matrices (Theorem \ref{Mainth}). The first one, in Subsection \ref{sec:Weingarten}, relies on the second-order expansion of the Weingarten function, whereas the second one, in Subsection \ref{section:matricial_cumulants}, is obtained by defining a new expansion of the matricial cumulants, which provides a complete expansion of the mixed moments (Theorem \ref{Mainth-general}).

In Section \ref{Sec:orth}, we discuss the robustness of the results when the unitary invariance is replaced by the invariance by orthogonal conjugation. 

\newpage
\setcounter{tocdepth}{2}
\tableofcontents

\newpage
\section{Notions of freeness \label{sec:cFree}}In this section, we review the following different notions of freeness and some relations between them: freeness, infinitesimal freeness, freeness of order $o(t)$, freeness of type $B$, conditional freeness, cyclic monotone independence and monotone independence. We end the section with the definition of the corresponding convolutions of measures, and the use of the subordination functions to compute them.
\subsection{Freeness and infinitesimal freeness}
The following notion of freeness was introduced by Voiculescu in~\cite{Voiculescu1985}.
\begin{definition}\label{Def:freeness}A pair $(\mathcal{A},\tau)$ consisting of a unital algebra $\mathcal{A}$ and a linear functional $\tau : \mathcal{A}\to \mathbb{C}$ which is unital (i.e. $\tau(1)=1$) will be called a \emph{non-commutative probability space}.
One says that two unital subalgebras $\mathcal{A}_1,\mathcal{A}_2\subset \mathcal{A}$ are \emph{free with respect to $\tau$} if, whenever $a_1\in \mathcal{A}_{i_1},\ldots,a_n\in \mathcal{A}_{i_n}$ are such that $i_1\neq  i_2\neq i_3\neq\cdots$, and  $\tau(a_1)=\cdots=\tau(a_n)=0$:
\begin{equation}\tau(a_1\cdots a_n)=0.\label{Eq:freeness}\end{equation}
\end{definition}

Thanks to \cite[Lemma 5.18]{Nica2006}, the condition \eqref{Eq:freeness} on $\tau$ is equivalent to the following condition: whenever $a_n\in \mathcal{A}_{i_n},\ldots,a_1\in \mathcal{A}_{i_1},b_1\in \mathcal{A}_{j_1},\ldots,b_m\in \mathcal{A}_{j_m}$ are such that $i_1\neq i_2\neq i_3\neq\cdots$,
$j_1\neq  j_2\neq j_3\neq\cdots$, and  $\tau(a_n)=\cdots=\tau(a_1)=0=\tau(b_1)=\cdots=\tau(b_m)$:
\begin{equation}
\tau(a_n\cdots a_1b_1\cdots b_m)=\tau(a_nb_m)\cdots \tau(a_1b_1)\label{Eq:freeness_bis}\tag{\ref{Eq:freeness}'}
\end{equation}
if $n=m$ and $i_1=j_1,\ldots ,i_n=j_m$, and
$\tau(a_n\cdots a_1b_1\cdots b_m)=0$
otherwise.

Free independence appears asymptotically for certain random matrices. It can be expressed as freeness up to a certain order, accordingly to the following definition.
\begin{definition}Let $\mathcal{A}$ be a unital algebra generated by two unital subalgebras $\mathcal{A}_1,\mathcal{A}_2$. Let $T\subset \mathbb{R}$ be an index set such that $0$ is an accumulation point of $T$. Consider a family  $(\mathcal{A},\tau_t)_{t\in T}$ of non-commutative probability spaces.

We say that $\mathcal{A}_1$ and $\mathcal{A}_2$ are \emph{free with respect to $\tau_t$ up to order $o(t)$} if, whenever $a_1\in \mathcal{A}_{i_1},\ldots,a_n\in \mathcal{A}_{i_n}$ are such that $i_1\neq  i_2\neq i_3\neq \cdots$:
$$
\tau_t\Big((a_1-\tau_t(a_1))\cdots (a_n-\tau_t(a_n))\Big)=o(t).
$$
If we have the stronger conditions
$$\tau_t\Big((a_1-\tau_t(a_1))\cdots (a_n-\tau_t(a_n))\Big)=O(t^2),$$
we say that 
$\mathcal{A}_1$ and $\mathcal{A}_2$ are \emph{free with respect to $\tau_t$ up to order $O(t^2)$}.
\end{definition}

We might be given a family of unital linear functionals $\tau_t:\mathcal{A}\to \mathbb{C}$ so that $\tau_t=\tau+t\tau'+o(t)$ (for example $\tau_t=\tau+t\tau'$). Then, the infinitesimal freeness of $\mathcal{A}_1$ and $\mathcal{A}_2$ with respect to $(\tau,\tau')$ has been introduced in~\cite{Belinschi2012} as the freeness of $\mathcal{A}_1$ and $\mathcal{A}_2$ with respect to $\tau_t$ up to order $o(t)$. As explained in~\cite[Section 2.2]{Belinschi2012}, the previous requirement translates into the following definition.
\begin{definition}\label{Def:inf_freeness}
An  \emph{infinitesimal  non-commutative  probability  space} $(\mathcal{A},\tau,\tau')$ consists in a non-commutative probability space $(\mathcal{A},\tau)$ together with a linear functional $\tau' :\mathcal{A}\to \mathbb{C}$ with $\tau'(1)=0$. One says that two unital subalgebras $\mathcal{A}_1,\mathcal{A}_2\subset \mathcal{A}$ are \emph{infinitesimally free with respect to $(\tau,\tau')$} if, whenever $a_1\in \mathcal{A}_{i_1},\ldots,a_n\in \mathcal{A}_{i_n}$ are such that $i_1\neq i_2\neq i_3\neq\cdots$, and  $\tau(a_1)=\cdots=\tau(a_n)=0$:
\begin{align}
\tau(a_1\cdots a_n)&=0\notag,\\
\text{and}\ \ \ \tau' (a_1\cdots a_n)&=\sum_{j=1}^n\tau(a_1\cdots a_{j-1}\tau' (a_j )a_{j+1}\cdots a_n).\label{condition_inf_freeness}
\end{align}
\end{definition}
As remarked in~\cite[Remark 2.8.]{Fevrier2010}, the use of~\eqref{Eq:freeness_bis} in order to compute the r.h.s. of (\ref{condition_inf_freeness}) allows to replace this condition on $\tau'$ by the equivalent condition
\begin{equation}
\tau' (a_1\cdots a_n)=\tau(a_1a_n)\tau(a_2a_{n-1})\cdots \tau(a_{(n-1)/2} a_{(n+3)/2})\tau'(a_{(n+1)/2})\label{condition_inf_freeness_two}\tag{\ref{condition_inf_freeness}'}
\end{equation}
if $n$ is odd with $i_1=i_n$, $i_2=i_{n-1}$, $\ldots$, $i_{(n-1)/2}=i_{(n+3)/2}$, and
$\tau' (a_1\cdots a_n)=0$ otherwise. The link with freeness up to order $o(t)$, essentially contained in \cite[Section 2.2]{Belinschi2012}, is as follows.

\begin{prop}
 \label{Freeness_order_one}Let $\mathcal{A}$ be a unital algebra generated by two unital subalgebras $\mathcal{A}_1,\mathcal{A}_2$.  Let $T\subset \mathbb{R}$ be an index set such that $0$ is an accumulation point of $T$. Consider a family  $(\mathcal{A},\tau_t)_{t\in T}$ of non-commutative probability spaces  such that:
 \begin{itemize}
 \item $\mathcal{A}_1$ and $\mathcal{A}_2$ are free with respect to $\tau_t$ up to order $o(t)$,
 \item there exists $\tau' :\mathcal{A}_1\cup \mathcal{A}_2 \to \mathbb{C}$ such that for all $a\in \mathcal{A}_1 \cup \mathcal{A}_2$, as $t\to 0$, we have $$\tau_t(a)= \tau(a)+t\cdot \tau'(a) +o(t).$$
 \end{itemize}
 \noindent Then there exists an extension of $\tau'$ on $\mathcal{A}$, denoted also by $\tau'$, such that for all $a\in \mathcal{A}$,  as $t\to 0$,  $\tau_t(a)= \tau(a)+t\cdot \tau'(a) +o(t)$. Moreover, $\mathcal{A}_1$ and $\mathcal{A}_2$ are infinitesimally free with respect to $(\tau,\tau')$.
\end{prop}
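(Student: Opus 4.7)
The plan is to propagate the expansion $\tau_t = \tau + t\,\tau' + o(t)$ from the generators to all of $\mathcal{A}$ by using the defining relation of ``freeness up to order $o(t)$'' as a recursion, and then to extract the infinitesimal freeness identity~(\ref{condition_inf_freeness}) by collecting the coefficient of $t$.

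Since $\mathcal{A}_1 \cup \mathcal{A}_2$ generates $\mathcal{A}$, every element is a linear combination of $1_{\mathcal{A}}$ and alternating products $a_1 \cdots a_n$ with $a_k \in \mathcal{A}_{i_k}$ and $i_1 \neq i_2 \neq \cdots \neq i_n$. For such a product, expanding $\prod_{j=1}^n \bigl(a_j - \tau_t(a_j)\cdot 1_{\mathcal{A}}\bigr)$, pulling out scalars, applying $\tau_t$, and using the freeness hypothesis, I obtain
$$\tau_t(a_1 \cdots a_n) = -\sum_{\emptyset \neq S \subseteq \{1,\ldots,n\}} (-1)^{|S|} \Bigl(\prod_{j \in S} \tau_t(a_j)\Bigr)\, \tau_t(P_S) + o(t),$$
where $P_S$ denotes the product of the $a_j$ for $j \notin S$, of length strictly less than $n$. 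Since $\tau_t$ is already linear on $\mathcal{A}$, an induction on the length of the product (with the given expansion on $\mathcal{A}_1 \cup \mathcal{A}_2$ as base case) extends $\tau_t(a)=\tau(a) + t\,\tau'(a) + o(t)$ to every alternating product, hence, by linearity, to every $a \in \mathcal{A}$. This unambiguously defines the extensions of $\tau$ and $\tau'$, and the $t^0$-part of the recursion is exactly (\ref{Eq:freeness}) in the centered form $\tau\bigl((a_1-\tau(a_1))\cdots(a_n-\tau(a_n))\bigr) = 0$, so $\mathcal{A}_1$ and $\mathcal{A}_2$ are automatically free with respect to $\tau$.

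To verify (\ref{condition_inf_freeness}), I fix centered alternating elements $a_1, \ldots, a_n$ (with $\tau(a_j)=0$), so that $a_j - \tau_t(a_j) = a_j - t\,\tau'(a_j)\cdot 1_{\mathcal{A}} + o(t)\cdot 1_{\mathcal{A}}$. Expanding $\prod_j(a_j - \tau_t(a_j))$ to first order in $t$, applying $\tau_t$, using freeness up to $o(t)$ on the left-hand side and the just-established expansion on the shorter products on the right, the coefficient of $t$ reads
$$\tau'(a_1 \cdots a_n) = \sum_{j=1}^n \tau'(a_j)\,\tau(a_1 \cdots \widehat{a_j} \cdots a_n) = \sum_{j=1}^n \tau\bigl(a_1 \cdots a_{j-1}\,\tau'(a_j)\,a_{j+1}\cdots a_n\bigr),$$
which is exactly~(\ref{condition_inf_freeness}). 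The main technical nuisance is that deleting positions in the recursion can create consecutive factors from the same subalgebra; this is harmless because their product remains an element of that subalgebra, so the resulting shorter sequence is again alternating (possibly of smaller length) and the induction hypothesis still applies.
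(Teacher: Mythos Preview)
Your proof is correct and follows essentially the same approach as the paper's: both expand $\tau_t\bigl(\prod_j(a_j-\tau_t(a_j))\bigr)=o(t)$, isolate $\tau_t(a_1\cdots a_n)$, and run an induction on the length of the alternating word, using that for $\tau$-centered $a_j$ one has $\tau_t(a_j)=t\,\tau'(a_j)+o(t)$ so that only the terms with a single factor removed contribute at order $t$. The paper organizes the induction directly on $\tau$-centered alternating words and deduces the expansion on general products as a corollary (its equation~(2.4)), whereas you first extend the expansion to all alternating words and then specialize to the centered case to read off~(\ref{condition_inf_freeness}); this is a cosmetic difference, and your closing remark about merging consecutive same-subalgebra factors is exactly what the paper uses implicitly when invoking the induction hypothesis on the shortened products.
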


\begin{proof}Usually, the infinitesimal freeness is shown from freeness of order $o(t)$ assuming the existence of $\tau$ and $\tau'$ on $\mathcal{A}$. Here, the expansion $\tau_t= \tau+t\cdot \tau'+o(t)$ on the whole algebra $\mathcal{A}$ is not in the assumptions, and for sake of completeness, we provide a full proof of the proposition.

For the existence of $\tau$ and $\tau'$ on $\mathcal{A}$, note that $\mathcal{A}$ is the linear span of $\mathcal{A}_1$, of $\mathcal{A}_2$ and of elements of the form $a_1\cdots a_n$ where $a_1,\ldots,a_n$ are as in Definition~\ref{Def:inf_freeness}. The existence of $\tau$, $\tau'$ on $\mathcal{A}_1$ and $\mathcal{A}_2$ as  $\tau_t= \tau+t\cdot \tau'+o(t)$ is part of the assumptions. Thanks to \eqref{condition_inf_freeness}, it suffices to show that 
\begin{equation}\label{eq:proof_inf_freeness}
\tau_t(a_1\cdots a_n)=\sum_{j=1}^nt\cdot \tau(a_1\cdots a_{j-1}\tau' (a_j )a_{j+1}\cdots a_n)+o(t)
\end{equation} for all $a_1,\ldots,a_n$ as in Definition~\ref{Def:inf_freeness} in order to show at the same time the existence of $\tau$, $\tau'$ on $\mathcal{A}$ as  $\tau_t= \tau+t\cdot \tau'+o(t)$, and the infinitesimal freeness of $\mathcal{A}_1$ and $\mathcal{A}_2$. Indeed, if Equation \eqref{eq:proof_inf_freeness} holds for any $n\leq n_0 $ ($n_0$ being a fixed integer) and any $a_1,\ldots,a_n$ as in Definition~\ref{Def:inf_freeness}, then, for any $k\leq n_0 $ and any $a_1,\ldots,a_k$ in $\mathcal{A}_1\cup \mathcal{A}_2$, there exists $\tau(a_1\ldots a_k)$ and $\tau'(a_1\ldots a_k)$ such that 
\begin{equation}
\label{eq:corollary_hypothesis_inf_freeness}\tau_t(a_1\cdots a_k)= \tau(a_1\ldots a_k)+t \cdot \tau'(a_1\ldots a_k)+o(t).
\end{equation}
We now proceed to prove Equation (\ref{eq:proof_inf_freeness}) by induction on $n$. The case $n=1$ is true by assumption. If $n>1$, let $a_1\in \mathcal{A}_{i_1},\ldots,a_n\in \mathcal{A}_{i_n}$ such that $i_1\neq i_2\neq i_3\neq \cdots$, and  $\tau(a_1)=\cdots=\tau(a_n)=0$. By freeness up to order $o(t)$, we have that
\begin{align*}
o(t)=&\tau_t\Big((a_1-\tau_t(a_1))\cdots (a_n-\tau_t(a_n))\Big)\\
=&\tau_t(a_1\cdots a_n) +\sum_{r=1}^n (-1)^r \sum_{1\leq k_1<\ldots<k_r\leq n} \left(\prod_{i=1}^{r} \tau_t(a_{k_i}) \right)\tau_t(a_1\cdots \hat{a}_{k_1}\cdots \hat{a}_{k_r} \cdots a_n)
\end{align*}
where $\hat{a}_i$ indicates terms that are omitted. For all $i$, $\tau_t(a_i)=\tau(a_i)+t\cdot \tau'(a_i)+o(t)=t\cdot \tau'(a_i)+o(t)$. With the induction hypothesis, and particularly its consequence, namely Equation (\ref{eq:corollary_hypothesis_inf_freeness}) for $k<n$, it yields that
$$\tau_t(a_{k_1})\cdots \tau_t(a_{k_r}) \tau_t(a_1\cdots \hat{a}_{k_1}\cdots \hat{a}_{k_r} \cdots a_n)= o(t)$$
whenever $2\leq r \leq n$, 
and
$$\tau_t(a_{k_1})\tau_t(a_1\cdots \hat{a}_{k_1}\cdots  a_n)=t\cdot \tau'(a_{k_1}) \tau(a_1\cdots \hat{a}_{k_1}\cdots  a_n)+o(t)$$
whenever $r=1$. Finally,
$$
o(t)=\tau_t(a_1\cdots a_n)+\sum_{1\leq k_1\leq n} -t\cdot \tau'(a_{k_1}) \tau(a_1\cdots \hat{a}_{k_1}\cdots  a_n)+o(t)$$
which allows to conclude:   
$\tau_t(a_1\cdots a_n)=\sum_{i=1}^nt\cdot \tau(a_1\cdots a_{j-1}\tau' (a_j )a_{j+1}\cdots a_n)+o(t).$
\end{proof}

In this article, we are mostly interested in infinitesimal freeness on a linear subspace of $\mathcal{A}$ which belongs to the kernel of $\tau$. In this case, most of the terms in \eqref{condition_inf_freeness} vanish and the formulae takes the following particular form.

\begin{lemme}\label{Inf_freeness_on_ideal}Let $(\mathcal{A},\tau,\tau')$ be an infinitesimal non-commutative probability space and let $\mathcal{A}_1,\mathcal{A}_2$ be two unital subalgebras which are infinitesimally free with respect to $(\tau,\tau')$. Consider an ideal $\mathcal{I}$ of $\mathcal{A}$ such that $\mathcal{I}\subset \ker(\tau)$. Set $\mathcal{I}_1:=\mathcal{I}\cap \mathcal{A}_1$ and $\mathcal{I}_2:=\mathcal{I}\cap \mathcal{A}_2$. 

Then,
whenever $a_n\in \mathcal{A}_{i_n},\ldots,a_1\in \mathcal{A}_{i_1},v\in \mathcal{I}_k,b_1\in \mathcal{A}_{j_1},\ldots,b_m\in \mathcal{A}_{j_m}$ are such that any two
consecutive indices in the list $i_n,\ldots,i_1,k,j_1,\ldots,j_m \in\{1,2\}$ are different  and  $\tau(a_n)=\cdots=\tau(a_1)=0=\tau(b_1)=\cdots=\tau(b_m)$, we have
\begin{equation}
\tau'(a_n\cdots a_1vb_1\cdots b_m)=\tau(a_n\cdots a_1\tau'(v)b_1\cdots b_m).\label{type_B_deux}
\end{equation}
\end{lemme}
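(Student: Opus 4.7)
The plan is to apply the defining identity \eqref{condition_inf_freeness} of infinitesimal freeness directly to the full word $a_n\cdots a_1 v b_1\cdots b_m$, and then exploit the ideal property of $\mathcal{I}$ to collapse the resulting sum to a single term, which will automatically be the right-hand side of \eqref{type_B_deux}.

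First I would check that the word $a_n,\ldots,a_1,v,b_1,\ldots,b_m$ satisfies the hypotheses of Definition \ref{Def:inf_freeness}: the indices alternate by assumption, and each letter has vanishing $\tau$-value --- this is given for the $a_\ell$ and $b_\ell$, and for $v$ it follows from $v\in\mathcal{I}\subset\ker(\tau)$. Applying \eqref{condition_inf_freeness} then rewrites $\tau'(a_n\cdots a_1 v b_1\cdots b_m)$ as a sum over the $n+1+m$ positions of the word, each summand being $\tau'$ applied to one letter (a scalar) times $\tau$ of the word with that letter deleted.

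The key observation is that, for every term in this sum other than the one coming from the position of $v$, the truncated word still contains $v$ as a factor. Since $\mathcal{I}$ is a two-sided ideal of $\mathcal{A}$, any such product lies in $\mathcal{I}\subset\ker(\tau)$, and its $\tau$-value is therefore zero. Only the single term in which $v$ has been removed survives, giving $\tau'(v)\,\tau(a_n\cdots a_1 b_1\cdots b_m)$; since $\tau'(v)$ is a scalar, this coincides with $\tau(a_n\cdots a_1\tau'(v)b_1\cdots b_m)$, establishing \eqref{type_B_deux}.

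There is no serious obstacle here: the argument is essentially a one-line application of infinitesimal freeness followed by absorption into the ideal. The only subtle point is to resist the temptation to simplify the truncated words via the mixed-moments identity \eqref{Eq:freeness_bis} or the symmetric-pairing form \eqref{condition_inf_freeness_two}; the ideal property does all the work on its own, without any case analysis on whether $n=m$ or on how the remaining indices pair up.
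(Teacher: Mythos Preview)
Your argument is correct and matches exactly the approach the paper indicates: immediately before the lemma, the authors write that ``most of the terms in \eqref{condition_inf_freeness} vanish,'' and your proof makes this precise by observing that every summand with $c_j\neq v$ carries a factor $\tau(\cdots v\cdots)\in\tau(\mathcal{I})=\{0\}$. The paper gives no further details, so your write-up is essentially the intended proof.
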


As explained in the next section (see Proposition~\ref{B_freeness}), the condition \eqref{type_B_deux} is an occurrence of the so-called freeness of type $B$.
\subsection{Freeness of type $B$}
The infinitesimal freeness was originally introduced in~\cite{Belinschi2012} as a weakening of the
notion of freeness of type $B$. Non-commutative probability of type $B$ has been introduced in~\cite{Biane2003}, with a motivation which derives from lattice theory.

\begin{definition}A \emph{non-commutative probability space of type $B$} consists in a system $(\mathcal{A},\tau,V,\varphi,\Phi)$ where $(\mathcal{A},\tau)$ is a non-commutative probability space, $V$ is a complex vector space, $\varphi:V\to \mathbb{C}$ is a linear functional and $\Phi:(a_1,v,a_2)\mapsto a_1va_2$ is a two-sided action of $\mathcal{A}$ on $V$.
\end{definition}
In the case where $V$ is an ideal of $\mathcal{A}$, the two-sided action $\Phi$ is just the algebra action of $\mathcal{A}$ on itself. In this case, we shall drop the mention of $\Phi$ and write $(\mathcal{A},\tau,V,\varphi)$.
\begin{definition}\label{Def:freeness_B}Let $(\mathcal{A},\tau,V,\varphi,\Phi)$ be a non-commutative probability space of type $B$. Given two unital subalgebras $\mathcal{A}_1,\mathcal{A}_2\subset \mathcal{A}$ and two linear subspaces $V_1,V_2\subset V$ so that $V_1$ (respectively $V_2$) is invariant under the action of $\mathcal{A}_1$ (resp. of $\mathcal{A}_2$), one says that $(\mathcal{A}_1,V_1)$ and $(\mathcal{A}_2,V_2)$ are \emph{free of type $B$ in $(\mathcal{A},\tau,V,\varphi,\Phi)$} if $\mathcal{A}_1$ and $\mathcal{A}_2$ are
free with respect to $\tau$ and the following condition holds: whenever $a_n\in \mathcal{A}_{i_n},\ldots,a_1\in \mathcal{A}_{i_1},v\in V_h,b_1\in \mathcal{A}_{j_1},\ldots,b_m\in \mathcal{A}_{j_m}$ are such that any two
consecutive indices in the list $i_n,\ldots,i_1,h,j_1,\ldots,j_m $ are different  and  $\tau(a_n)=\cdots=\tau(a_1)=0=\tau(b_1)=\cdots=\tau(b_m)$, we have
\begin{equation}
\varphi (a_n\cdots a_1vb_1\cdots b_m)=\tau(a_nb_m)\cdots \tau(a_1b_1)\varphi(v)\label{type_B}
\end{equation}
if $n=m$ and $i_1=j_1,\ldots ,i_n=j_m$, and
$\varphi(a_n\cdots a_1vb_1\cdots b_m)=0$
otherwise.
\end{definition}
The use of~\eqref{Eq:freeness_bis} allows to replace the condition \eqref{type_B} on $\varphi$ by the equivalent condition
\begin{equation}
\varphi (a_n\cdots a_1vb_1\cdots b_m)=\tau(a_n\cdots a_1\varphi(v)b_1\cdots b_m)\label{type_B_bis}\tag{\ref{type_B}'}.
\end{equation}

Note that \eqref{type_B_bis} still applies when $a_n$ or $b_m$ is scalar. By bilinearity, it yields the following more general formulae for freeness of type $B$: whenever $a_n\in \mathcal{A}_{i_n},\ldots,a_1\in \mathcal{A}_{i_1},v\in V_h,b_1\in \mathcal{A}_{j_1},\ldots,b_m\in \mathcal{A}_{j_m}$ are such that any two
consecutive indices in the list $i_n,\ldots,i_1,h,j_1,\ldots,j_m $ are different  and  $\tau(a_{n-1})=\cdots=\tau(a_1)=0=\tau(b_1)=\cdots=\tau(b_{m-1})$, we have
\begin{equation}
\varphi (a_n\cdots a_1vb_1\cdots b_m)=\tau(a_n\cdots a_1\varphi(v)b_1\cdots b_m)\label{type_B_ter}\tag{\ref{type_B}''}.
\end{equation}

As explained in~\cite[Remark 2.9.]{Fevrier2010}, the similarities between condition~\eqref{condition_inf_freeness_two} and condition~\eqref{type_B} allows to incorporate the freeness of type B into the framework of infinitesimal freeness, by replacing the algebra by another one if necessary. The link between the two notions of freeness is even more direct in the case where $V$ is an ideal of $\mathcal{A}$ which is included in $\ker(\tau)$, as the following proposition show.

\begin{prop}\label{B_freeness}Let $(\mathcal{A},\tau,\tau')$ be an infinitesimal non-commutative probability space and let $\mathcal{A}_1,\mathcal{A}_2$ be two unital subalgebras which are infinitesimally free with respect to $(\tau,\tau')$.

Consider an ideal $\mathcal{I}$ of $\mathcal{A}$ such that $\mathcal{I}\subset \ker(\tau)$. Then $(\mathcal{A}_1,\mathcal{I}\cap \mathcal{A}_1)$ and $(\mathcal{A}_2,\mathcal{I}\cap \mathcal{A}_2)$ are free of type $B$ in the non-commutative probability space $(\mathcal{A},\tau,\mathcal{I},\tau')$ of type $B$.
\end{prop}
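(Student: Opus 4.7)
The plan is to verify the three requirements of Definition \ref{Def:freeness_B} in order: (a) that $\mathcal{A}_1$ and $\mathcal{A}_2$ are free with respect to $\tau$; (b) that $\mathcal{I}\cap\mathcal{A}_i$ is invariant under the two-sided action of $\mathcal{A}_i$; and (c) that the type $B$ condition \eqref{type_B} holds for $\varphi = \tau'$ and $V_i = \mathcal{I}\cap\mathcal{A}_i$.

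Point (a) is built into Definition \ref{Def:inf_freeness} of infinitesimal freeness. Point (b) is immediate: for any $a,a'\in\mathcal{A}_i$ and $v\in\mathcal{I}\cap\mathcal{A}_i$, we have $ava'\in\mathcal{A}_i$ since $\mathcal{A}_i$ is a subalgebra, and $ava'\in\mathcal{I}$ since $\mathcal{I}$ is a two-sided ideal of the ambient algebra $\mathcal{A}$; hence $ava'\in \mathcal{I}\cap\mathcal{A}_i$.

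The heart of the matter is (c). Here I would simply appeal to the equivalent reformulation \eqref{type_B_bis} of \eqref{type_B} (the text has already spelled out that \eqref{Eq:freeness_bis} makes \eqref{type_B} and \eqref{type_B_bis} equivalent). With that reformulation in hand, the required identity
\[
\tau'(a_n\cdots a_1 v b_1\cdots b_m)=\tau(a_n\cdots a_1\,\tau'(v)\,b_1\cdots b_m)
\]
under the alternation and centering hypotheses is exactly the content of Lemma \ref{Inf_freeness_on_ideal} applied to the subalgebras $\mathcal{A}_1,\mathcal{A}_2$ (with $\mathcal{I}_i=\mathcal{I}\cap\mathcal{A}_i$). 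Since the hypotheses of that lemma are met verbatim, no extra computation is needed.

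There is essentially no hard step here: the proposition is a packaging result. The only point that deserves a brief comment is to confirm that the inclusion $\mathcal{I}\subset\ker(\tau)$ is used only to make the many ``scalar-on-$\mathcal{I}$'' terms in \eqref{condition_inf_freeness} collapse, which is precisely what produces the simple formula \eqref{type_B_deux} from Lemma \ref{Inf_freeness_on_ideal}. Once this is made explicit, the three items (a)--(c) are assembled into the statement of Definition \ref{Def:freeness_B}, concluding the proof.
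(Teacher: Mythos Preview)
Your proof is correct and follows exactly the paper's approach: freeness of $\mathcal{A}_1,\mathcal{A}_2$ with respect to $\tau$ comes from the definition of infinitesimal freeness, and condition~\eqref{type_B_bis} is obtained by a direct application of Lemma~\ref{Inf_freeness_on_ideal}. Your explicit verification of the invariance of $\mathcal{I}\cap\mathcal{A}_i$ under $\mathcal{A}_i$ is a small addition the paper leaves implicit, but otherwise the arguments coincide.
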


\begin{proof}The algebras $\mathcal{A}_1$ and $\mathcal{A}_2$ are free with respect to $\tau$, and a direct application of Lemma~\ref{Inf_freeness_on_ideal} yields~\eqref{type_B_bis}.
\end{proof}

In the case where $\tau'$ is not defined on the whole algebra, it is still possible to view freeness of type $B$ as a first-order expansion of freeness, as the following proposition shows (compare with Proposition~\ref{Freeness_order_one}).

\begin{prop}\label{Asymptotic_B_freeness}Let $\mathcal{A}$ be a unital algebra generated by two unital subalgebras $\mathcal{A}_1,\mathcal{A}_2$.  Let $T\subset \mathbb{R}$ be an index set such that $0$ is an accumulation point of $T$. Consider a family  $(\mathcal{A},\tau_t)_{t\in T}$ of non-commutative probability spaces and let  $\mathcal{I}_1$ (respectively $\mathcal{I}_2$) be an ideal of $\mathcal{A}_1$ (respectively of $\mathcal{A}_2$) such that:
\begin{itemize} 
\item $\mathcal{A}_1$ and $\mathcal{A}_2$ are free with respect to $\tau_t$ up to order $o(t)$, 
\item there exists $\tau: \mathcal{A}_1\cup \mathcal{A}_2 \to \mathbb{C}$ such that for $a\in \mathcal{A}_1\cup \mathcal{A}_2$, as $t\to 0$, $\tau_t(a)=\tau(a)+o(t),$
\item there exists $\tau': \mathcal{I}_1\cup \mathcal{I}_2 \to \mathbb{C}$ such that for $a \in \mathcal{I}_1\cup \mathcal{I}_2$, as $t\to 0$, $\tau_t(a)=t.\tau'(a)+o(t). $ 
\end{itemize}
Then, denoting by $\mathcal{I}$ the ideal of $\mathcal{A}$ generated by $\mathcal{I}_1\cup \mathcal{I}_2$, there exists an extension of $\tau$ on $\mathcal{A}$ and of $\tau'$ on $\mathcal{I}$, denoted also by $\tau$ and $\tau'$, such that, as $t\to 0$: 
\begin{itemize}
\item for all $a\in \mathcal{A}$,  $\tau_t(a)= \tau(a)+o(1)$, 
\item for all $a\in \mathcal{I}$, $ \tau_t(a)= t\cdot \tau'(a)+o(t)$.
\end{itemize}
Moreover, $(\mathcal{A}_1,\mathcal{I}_1)$ and $(\mathcal{A}_2,\mathcal{I}_2)$ are free of type $B$ in the non-commutative probability space $(\mathcal{A},\tau,\mathcal{I},\tau')$ of type $B$.
\end{prop}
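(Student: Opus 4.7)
The proof follows the template of Proposition \ref{Freeness_order_one} adapted to the type-$B$ setting, and proceeds in three steps.

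\textbf{Step 1 (extension of $\tau$).} First I would extend $\tau$ to all of $\mathcal{A}$ so as to make $\mathcal{A}_1, \mathcal{A}_2$ free with respect to $\tau$ and satisfy $\tau_t(a) = \tau(a) + o(1)$ for every $a \in \mathcal{A}$. This is the standard asymptotic-freeness argument: freeness up to $o(t)$ trivially implies freeness up to $o(1)$, which combined with the pointwise convergence $\tau_t \to \tau$ on $\mathcal{A}_1 \cup \mathcal{A}_2$ yields the extension by induction on the length of an alternating product.

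\textbf{Step 2 (sandwich expansion).} The heart of the proof is an expansion of $\tau_t$ on a sandwich element
$$w = a_n \cdots a_1 v b_1 \cdots b_m, \qquad v \in \mathcal{I}_h,$$
with alternating indices in the list $i_n, \ldots, i_1, h, j_1, \ldots, j_m$ and $\tau(a_i) = 0 = \tau(b_j)$ for every $i, j$. Setting $(c_1, \ldots, c_{n+m+1}) = (a_n, \ldots, a_1, v, b_1, \ldots, b_m)$, the assumption of freeness up to $o(t)$ gives
$$o(t) = \tau_t\!\left(\prod_{k=1}^{n+m+1}(c_k - \tau_t(c_k))\right) = \sum_{S \subseteq \{1,\ldots,n+m+1\}} (-1)^{|S|}\Big(\prod_{k \in S}\tau_t(c_k)\Big)\,\tau_t\!\left(\prod_{k \notin S,\,\text{in order}} c_k\right).$$
The term $S = \emptyset$ equals $\tau_t(w)$. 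For $|S| = 1$, each centered entry $c_k$ contributes $\tau_t(c_k) \cdot O(1) = o(t) \cdot O(1) = o(t)$, while $c_k = v$ contributes $(t\tau'(v) + o(t))\,(\tau(a_n \cdots a_1 b_1 \cdots b_m) + o(1))$ by Step 1. For $|S| \geq 2$, at least two factors are in $\{o(t), O(t)\}$, giving $o(t)$. One thus obtains
$$\tau_t(w) = t\, \tau'(v)\, \tau(a_n \cdots a_1 b_1 \cdots b_m) + o(t).$$

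\textbf{Step 3 (extension of $\tau'$ and type-$B$ freeness).} I define $\tau'(a) := \lim_{t \to 0} \tau_t(a)/t$ on $\mathcal{I}$ and verify existence of the limit by induction on the length of alternating products (after centering non-ideal entries against $\tau$, which reduces to shorter cases). A sandwich element is handled by Step 2; an alternating product containing two or more ideal factors satisfies $\tau_t(w) = o(t)$, hence $\tau'(w) = 0$, by the same expansion: each of the $\geq 2$ ideal factors now contributes an extra $O(t)$ in the $|S|=1$ terms, and the inductive hypothesis on the residual product gives another $O(t)$ once one such factor is removed. Freeness of type $B$ (formula \eqref{type_B}) is then immediate from the sandwich identity: since $a_1$ and $b_1$ lie in the same algebra opposite to $h$ and are $\tau$-centered, the residual $\tau(a_n \cdots a_1 b_1 \cdots b_m)$ is computed by the usual freeness formula \eqref{Eq:freeness_bis} as $\tau(a_n b_m) \cdots \tau(a_1 b_1)$ when $n = m$ with matching indices, and as $0$ otherwise, recovering precisely \eqref{type_B}.

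The main technical obstacle is the bookkeeping of the Step 3 induction when several ideal factors appear: tracking how the centering reductions, the combinations of adjacent same-algebra factors created by removals, and the subset-expansion interact. Conceptually, however, the argument just iterates Step 2, and the fact that $\tau_t$ is $O(t^k)$ in the presence of $k$ ideal factors (with the leading term vanishing for $k \geq 2$) emerges from counting the $O(t)$ contributions.
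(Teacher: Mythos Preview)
Your proof is correct and follows the same route as the paper: first extend $\tau$ to $\mathcal{A}$ via asymptotic freeness, then establish the sandwich formula $\tau_t(a_n\cdots a_1 v b_1\cdots b_m) = t\,\tau'(v)\,\tau(a_n\cdots a_1 b_1\cdots b_m) + o(t)$ by expanding $\tau_t\big(\prod_k(c_k-\tau_t(c_k))\big)=o(t)$ over subsets, which yields both the extension of $\tau'$ to $\mathcal{I}$ and the type-$B$ relation via \eqref{Eq:freeness_bis}. The only differences are cosmetic: your Step~2 bounds the residuals by $O(1)$ using Step~1 (a slight simplification over the paper, which bounds them by $O(t)$ via the induction hypothesis), while your Step~3 treatment of products with $\geq 2$ ideal factors is redundant---such a product is already a sandwich (pick any ideal factor as $v$), and the vanishing of $\tau'$ there follows because the residual $a_n\cdots a_1 b_1\cdots b_m$ still lies in $\mathcal{I}\subset\ker\tau$.
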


\begin{rem}\label{Asymptotic_B_freeness_ortho}In the proof of Proposition~\ref{Asymptotic_B_freeness}, we do not use the full freeness  up to order $o(t)$ of $\mathcal{A}_1$ and $\mathcal{A}_2$, but we use it only on the ideal $\mathcal{I}$. In particular, the conclusion of the proposition still holds if this hypothesis is replaced by the following one:
\begin{itemize} 
\item the algebras $\mathcal{A}_1$ and $\mathcal{A}_2$ are free with respect to $\tau_t$ up to order $o(1)$, and whenever $a_n\in \mathcal{A}_{i_n}, \ldots,a_1\in \mathcal{A}_{i_1}$, $v\in \mathcal{I}_h,b_1\in \mathcal{A}_{j_1},\ldots,b_m\in \mathcal{A}_{j_m}$ be such that any two
consecutive indices in the list $i_n,\ldots,i_1,h,j_1,\ldots,j_m $ are different, we have
$$\tau_t\left((a_n-\tau_t(a_n))\cdots (a_1-\tau_t(a_1))(v-\tau_t(v))(b_1-\tau_t(b_1))\cdots (b_n-\tau_t(b_n))\right)=o(t).$$
\end{itemize}
\end{rem}
\begin{proof}
We remark that $\mathcal{A}$ is the linear span of $\mathcal{A}_1$, of $\mathcal{A}_2$ and of elements of the form $a_1\cdots a_n$ where $a_1,\ldots,a_n$ are as in Definition~\ref{Def:freeness}. The existence of $\tau$ on $\mathcal{A}_1$ and $\mathcal{A}_2$ as the limit of $\tau_t$ as $t\to 0$ is part of the assumptions. Thanks to \eqref{Eq:freeness}, it suffices to show that \begin{equation}
\tau_t(a_1\cdots a_n)=o(1)\label{eq:proof_freeness}
\end{equation} for all $a_1,\ldots,a_n$ as in Definition~\ref{Def:freeness} in order to show at the same time the existence of $\tau$ on $\mathcal{A}$ as  $\lim_t \tau_t$, and the freeness of $\mathcal{A}_1$ and $\mathcal{A}_2$.  Indeed, if Equation \eqref{eq:proof_inf_freeness} holds for any $n\leq n_0 $ ($n_0$ being a fixed integer) and any $a_1,\ldots,a_n$ as in Definition~\ref{Def:freeness}, then, for any $k\leq n_0 $ and any $a_1,\ldots,a_k$ in $\mathcal{A}_1\cup \mathcal{A}_2$, there exists $\tau(a_1\ldots a_k)$ and $\tau'(a_1\ldots a_k)$ such that 
\begin{equation}
\label{eq:corollary_hypothesis_freeness}\tau_t(a_1\cdots a_k)= \tau(a_1\ldots a_k)+o(1).
\end{equation}
We now proceed by induction on $n$. The case $n=1$ is true by assumption. If $n>1$, let $a_1\in \mathcal{A}_{i_1},\ldots,a_n\in \mathcal{A}_{i_n}$ such that $i_1\neq i_2\neq i_3\neq \cdots$ and  $\tau(a_1)=\cdots=\tau(a_n)=0$. As in the proof of Proposition \ref{Freeness_order_one}, using the freeness up to order $o(t)$, we have that
\begin{align*}
o(t)=\tau_t(a_1\cdots a_n) +\sum_{r=1}^n (-1)^r \sum_{1\leq k_1<\ldots<k_r\leq n} \left(\prod_{i=1}^{r} \tau_t(a_{k_i}) \right)\tau_t(a_1\cdots \hat{a}_{k_1}\cdots \hat{a}_{k_r} \cdots a_n)
\end{align*}
where $\hat{a}_i$ indicates terms that are omitted. For all $i$, $\tau_t(a_i)=o(1)$. With the induction hypothesis, and particularly its consequence, namely Equation (\ref{eq:corollary_hypothesis_freeness}) for $k<n$, it yields that
$$\tau_t(a_{k_1})\cdots \tau_t(a_{k_r}) \tau_t(a_1\cdots \hat{a}_{k_1}\cdots \hat{a}_{k_r} \cdots a_n)=o(1)$$
whenever $1\leq r \leq n$, 
which allows to conclude $\tau_t(a_1\cdots a_n)=o(1)$.

For the existence of $\tau'$ on $\mathcal{I}$, we remark that $\mathcal{I}$ is the linear span of elements of the form $a_n\cdots a_1vb_1\cdots b_m$ where $a_1,\ldots,a_n,v,b_1,\ldots,b_m$ are as in Definition~\ref{Def:freeness_B}. Thanks to \eqref{type_B_bis}, it suffices to show that \begin{equation}
\tau_t(a_n\cdots a_1vb_1\cdots b_m)=t\cdot \tau(a_n\cdots a_1\tau'(v)b_1\cdots b_m)+o(t)\label{eq:proof_Bfreeness}
\end{equation} for all such elements in order to show at the same time the existence of $\tau'$ on $\mathcal{I}$ as $\lim_t \frac{1}{t}\tau$ and the freeness of type $B$ of $(\mathcal{A}_1,\mathcal{I}_1)$ and $(\mathcal{A}_2,\mathcal{I}_2)$.   Indeed, if Equation \eqref{eq:proof_Bfreeness} holds for any $n+m\leq n_0 $ ($n_0$ being a fixed integer) and any $a_1,\ldots,a_n,v,b_1,\ldots,b_m$ as in Definition~\ref{Def:freeness_B}, then, for any $k+\ell\leq n_0 $, any $v\in \mathcal{I}_1\cup \mathcal{I}_2$ and any $a_1,\ldots,a_k,b_1,\ldots,b_\ell \in \mathcal{A}_1\cup \mathcal{A}_2$, there exists $\tau'(a_n\cdots a_1vb_1\cdots b_m)$ such that 
\begin{equation}
\label{eq:corollary_hypothesis_Bfreeness}\tau_t(a_n\cdots a_1vb_1\cdots b_m)= t\cdot \tau'(a_n\cdots a_1vb_1\cdots b_m)+o(t).
\end{equation}We now proceed by induction on $n+m$. The case $n+m=0$ is true by assumption. If $n+m>0$, let $a_n\in \mathcal{A}_{i_n},\ldots,a_1\in \mathcal{A}_{i_1},v\in V_h,b_1\in \mathcal{A}_{j_1},\ldots,b_m\in \mathcal{A}_{j_m}$ be such that any two
consecutive indices in the list $i_n,\ldots,i_1,h,j_1,\ldots,j_m $ are different  and  $\tau(a_n)=\cdots=\tau(a_1)=0=\tau(b_1)=\cdots=\tau(b_m)$.  
Using the notation 
 $$c_1=a_n,\ldots,c_n=a_1,c_{n+1}=v, c_{n+2}=b_1,\ldots,c_{n+m+1}=b_m, $$
by freeness up to order $o(t)$, we have $o(t)=\tau_t\left((c_1-\tau_t(c_1))\cdots (c_{n+m+1}-\tau_t(c_{n+m+1}))\right).$
As before, we develop the product and obtain that $\tau_t(c_1\cdots c_{n+m+1})$ is equal to: 
\begin{align*}
\sum_{r=1}^{n+m+1} (-1)^{r+1} \sum_{1\leq k_1<\ldots<k_r\leq n+m+1} \left(\prod_{i=1}^{r} \tau_t(c_{k_i}) \right)\tau_t(c_1\cdots \hat{c}_{k_1}\cdots \hat{c}_{k_r} \cdots c_{n+m+1})+o(t),
\end{align*}
where $\hat{c}_i$ indicates terms that are omitted.
For all $i$, $\tau_t(c_i)=o(1)$ and moreover $\tau_t(c_{n+1})=\tau_t(v)=t\cdot \tau'(v)+o(t)$. With the induction hypothesis, it yields that
$$\tau_t(c_{k_1})\cdots \tau_t(c_{k_r}) \tau_t(c_1\cdots \hat{c}_{k_1}\cdots \hat{c}_{k_r} \cdots c_{n+m+1})=o(t)$$
whenever $2\leq r$ or $k_1\neq n+1$, 
and 
$$\tau_t(c_{n+1}) \tau_t(c_1\cdots \hat{c}_{n+1} \cdots c_{n+m+1})=t \cdot \tau'(v)  \tau(c_1\cdots \hat{c}_{n+1} \cdots c_{n+m+1})+o(t).$$
This allows to conclude: $\tau_t(a_n\cdots a_1 v b_1 \cdots b_{m}) = t\cdot \tau(a_n\cdots a_1\tau'(v)b_1\cdots b_m) + o(t).$
\end{proof}
\subsection{Conditional freeness}
In a situation of free independence, $\tau$ is distributive over the product of alternating elements $a_1,\ldots, a_n$ which are centred with respect to $\tau$.  The conditional freeness with respect to $(\tau,\varphi)$,  introduced in~\cite{Bozejko1996}, is the fact that another linear functional $\varphi$ is also distributive over the product  of alternating elements $a_1,\ldots, a_n$ which are centered with respect to $\tau$.

\begin{definition}A \emph{conditional non-commutative probability space} $(\mathcal{A},\tau,\varphi)$ consists in a unital algebra $\mathcal{A}$ and two unital linear functionals $\tau, \varphi : \mathcal{A}\to \mathbb{C}$. One says that two unital subalgebras $\mathcal{A}_1,\mathcal{A}_2\subset \mathcal{A}$ are \emph{conditionally free with respect to $(\tau,\varphi)$} (or $c$-free) if, whenever  $a_1\in \mathcal{A}_{i_1},\ldots,a_n\in \mathcal{A}_{i_n}$ ($n>1$) are such that $i_1\neq  i_2\neq i_3\neq\cdots$, and  $\tau(a_1)=\cdots=\tau(a_n)=0$:
\begin{align}
\tau(a_1\cdots a_n)&=0,\notag\\
\varphi(a_1\cdots a_n)&=\varphi(a_1)\cdots \varphi(a_n).\label{eq:cond_freeness}
\end{align}
\end{definition}
 Note that \eqref{eq:cond_freeness} still applies when $a_1$ or $a_n$ is scalar. By bilinearity, it yields  the following more general formulae: whenever  $a_1\in \mathcal{A}_{i_1},\ldots,a_n\in \mathcal{A}_{i_n}$ ($n>1$) are such that $i_1\neq  i_2\neq i_3\neq\cdots$, and  $\tau(a_2)=\cdots=\tau(a_{n-1})=0$: 
\begin{align}
\tau(a_1\cdots a_n)&=0,\notag\\
\varphi(a_1\cdots a_n)&=\varphi(a_1)\cdots \varphi(a_n).\label{eq:cond_freeness_bis}\tag{\ref{eq:cond_freeness}'}
\end{align}

Connections between conditional freeness and freeness of type $B$ has been pointed out in~\cite{Belinschi2012} and in~\cite{Fevrier2019}. In the following proposition, we show that conditional freeness can also be recovered from freeness of type $B$ when looking at vector state in the direction of the kernel.
 \begin{prop}\label{Prop:cfreeness}
Let $\mathcal{A}$ be a unital algebra generated by two unital subalgebras $\mathcal{A}_1,\mathcal{A}_2$. Let $\mathcal{I}_1$ (respectively $\mathcal{I}_2$) be an ideal of $\mathcal{A}_1$ (respectively of $\mathcal{A}_2$) and $\mathcal{I}$ be the ideal of $\mathcal{A}$ generated by $\mathcal{I}_1\cup \mathcal{I}_2$. We consider a unital linear functional $\tau: \mathcal{A}\to \mathbb{C}$ such that $\mathcal{I}\subset \ker(\tau)$ and a linear functional $\tau': \mathcal{I}\to \mathbb{C}$ such that $(\mathcal{A}_1,\mathcal{I}_1)$ and $(\mathcal{A}_2,\mathcal{I}_2)$ are free of type $B$ in the non-commutative probability space $(\mathcal{A},\tau,\mathcal{I},\tau')$ of type $B$.

Let $p\in \mathcal{I}_1$ such that $\tau'(p)\neq 0$. Consider the conditional non-commutative probability space $(\mathcal{A},\tau,\varphi)$ where $\varphi:\mathcal{A}\to \mathbb{C}$ is defined by $$\varphi(a):=\frac{1}{\tau'(p)}\tau'(p a ).$$    Then, $\varphi$ coincides with $\tau$ on $\mathcal{A}_2$, and $\mathcal{A}_1,\mathcal{A}_2$ are conditionally free with respect to $(\tau,\varphi).$
 \end{prop}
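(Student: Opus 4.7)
My plan is to reduce everything to the bilinear version of the freeness-of-type-$B$ relation given in \eqref{type_B_bis}, which, when specialized to $v \in \mathcal{I}_h$ followed by $b_1 \in \mathcal{A}_{j_1}, \ldots, b_m \in \mathcal{A}_{j_m}$ with alternating indices $h \neq j_1 \neq \cdots \neq j_m$ and each $b_k$ centered with respect to $\tau$, reads $\tau'(v b_1 \cdots b_m) = \tau'(v)\,\tau(b_1 \cdots b_m)$. To start, I would check that $\varphi$ is unital (immediate, $\varphi(1) = \tau'(p)/\tau'(p) = 1$) and coincides with $\tau$ on $\mathcal{A}_2$. For $a \in \mathcal{A}_2$, the decomposition $a = \tau(a)\cdot 1 + (a - \tau(a))$ gives
\[
\tau'(pa) = \tau(a)\,\tau'(p) + \tau'\bigl(p(a - \tau(a))\bigr),
\]
and the last term equals $\tau'(p)\,\tau(a-\tau(a)) = 0$ by the above formula, since $p \in \mathcal{I}_1$ and $a-\tau(a) \in \mathcal{A}_2$ have alternating indices and $\tau(a - \tau(a)) = 0$. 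Thus $\varphi(a) = \tau(a)$.

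Turning to the conditional freeness, the condition $\tau(a_1 \cdots a_n) = 0$ on alternating $\tau$-centered tuples is part of the type $B$ hypothesis. For the factorization condition, I would observe first that for $n \geq 2$ at least one index $i_k$ equals $2$, so the previous step gives $\varphi(a_k) = \tau(a_k) = 0$; hence $\varphi(a_1) \cdots \varphi(a_n) = 0$ automatically, and the task reduces to proving $\tau'(p a_1 \cdots a_n) = 0$. If $i_1 = 2$, the tuple $(p, a_1, \ldots, a_n)$ has alternating indices $1, 2, 1, \ldots$, and \eqref{type_B_bis} yields $\tau'(p a_1 \cdots a_n) = \tau'(p)\,\tau(a_1 \cdots a_n) = 0$ directly by the freeness of $\mathcal{A}_1, \mathcal{A}_2$ with respect to $\tau$.

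The main obstacle is the case $i_1 = 1$, in which $p$ and $a_1$ lie in the same algebra, so that \eqref{type_B_bis} cannot be applied as stated. The key step is to introduce the correction
\[
a_1' := p a_1 - \varphi(a_1)\, p \in \mathcal{I}_1,
\]
which, by the very definition of $\varphi$, satisfies $\tau'(a_1') = \tau'(pa_1) - \varphi(a_1)\,\tau'(p) = 0$. Writing $\tau'(p a_1 a_2 \cdots a_n) = \tau'(a_1' a_2 \cdots a_n) + \varphi(a_1)\, \tau'(p a_2 \cdots a_n)$ puts both summands in the regime of the preceding case: they become $\tau'(a_1')\,\tau(a_2 \cdots a_n) = 0$ and $\varphi(a_1)\,\tau'(p)\,\tau(a_2 \cdots a_n) = 0$ respectively, the latter vanishing because $a_2, \ldots, a_n$ is an alternating $\tau$-centered tuple starting in $\mathcal{A}_2$ (so its $\tau$-value is $0$ by freeness). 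Once the correction $a_1'$ is in hand, the remainder is a direct application of \eqref{type_B_bis}.
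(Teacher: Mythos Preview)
Your proof is correct and follows essentially the same route as the paper: reduce to showing $\tau'(pa_1\cdots a_n)=0$ and split according to whether $i_1=1$ or $i_1=2$. The only genuine difference is in the case $i_1=1$: you treat ``$p$ and $a_1$ lie in the same algebra'' as an obstacle and cure it with the correction $a_1'=pa_1-\varphi(a_1)p$, whereas the paper observes that since $\mathcal{I}_1$ is an ideal of $\mathcal{A}_1$, the product $pa_1$ itself lies in $\mathcal{I}_1$, so one may apply \eqref{type_B_bis} directly with $v=pa_1$ to get $\tau'((pa_1)a_2\cdots a_n)=\tau'(pa_1)\,\tau(a_2\cdots a_n)=0$. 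Your detour works, but the correction is unnecessary; the paper's one-line argument is cleaner. Similarly, for $\varphi|_{\mathcal{A}_2}=\tau|_{\mathcal{A}_2}$ the paper invokes the variant \eqref{type_B_ter} (which allows a non-centered outermost element) rather than centering by hand.
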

 Note that in this particular case where $\varphi$ coincides with $\tau$ on $\mathcal{A}_2$, the condition \eqref{eq:cond_freeness} is completely equivalent to the fact that  whenever  $a_1\in \mathcal{A}_{i_1},\ldots,a_n\in \mathcal{A}_{i_n}$ ($n>1$) are such that $i_1\neq  i_2\neq i_3\neq\cdots$, and  $\tau(a_1)=\cdots=\tau(a_n)=0$:
$$
\varphi(a_1\cdots a_n)=0.$$
\begin{proof}For all $a\in \mathcal{A}_2$, using \eqref{type_B_ter}, we have
$$\varphi(a)=\frac{1}{\tau'(p)}\tau'(p a )=\frac{1}{\tau'(p)}\tau'(p)\tau( a )=\tau( a ),$$
hence  $\varphi$ coincides with $\tau$ on $\mathcal{A}_2$. 

Let us prove now the conditional freeness of $\mathcal{A}_1$ and $\mathcal{A}_2$ with respect to $(\tau,\varphi)$.
Let $a_1\in \mathcal{A}_{i_1},\ldots,a_n\in \mathcal{A}_{i_n}$ ($n>1$) such that $i_1\neq  i_2\neq i_3\neq\cdots$ and  $\tau(a_1)=\cdots=\tau(a_n)=0$. The freeness of $\mathcal{A}_1$ and $\mathcal{A}_2$ with respect to $\tau$ yields $\tau(a_1\cdots a_n)=\tau(a_2\cdots a_n)=0$. Furthermore, because $\tau$ coincides with $\varphi$ on $\mathcal{A}_2$, we have $\varphi(a_1)\cdots \varphi(a_n)=0$. The computation of $\varphi(a_1\cdots a_n)$ depends on whether $a_1$ belongs to $\mathcal{A}_1$ or to $\mathcal{A}_2$. 

\noindent If $a_1\in \mathcal{A}_1$, using \eqref{type_B_bis} with $pa_1\in \mathcal{I}_1$ playing the role of $v$, we have:
\begin{align*}
\varphi(a_1\cdots a_n)=\frac{1}{\tau'(p)}\tau'((p a_1) a_2\cdots a_n ) =\frac{1}{\tau'(p)}\tau'(pa_1)\tau(a_2\cdots a_n) =0.\end{align*}

\noindent If $a_1\in \mathcal{A}_2$, using \eqref{type_B_bis}, with $p\in \mathcal{I}_1$ playing the role of $v$, we have
\begin{align*}
\varphi(a_1\cdots a_n)&=\frac{1}{\tau'(p)}\tau'(p a_1\cdots a_n ) =\frac{1}{\tau'(p)}\tau'(p)\tau(a_1\cdots a_n) =0.\end{align*}
Hence, in both cases, $\varphi(a_1\cdots a_n)=0=\varphi(a_1)\cdots \varphi(a_n)$, which allows us to conclude.
\end{proof}
\subsection{Monotone independence}The monotone independence was introduced in~\cite{Muraki2001}, and originates from classification of notions of independence in non-commutative probability spaces. 

\begin{definition}Let $(\mathcal{A},\varphi)$ be a non-commutative probability space. We consider a (non-necessarily unital) subalgebra $\mathcal{A}_1\subset \mathcal{A}$ and a unital subalgebra $\mathcal{A}_2\subset \mathcal{A}$. We say that the pair $(\mathcal{A}_1,\mathcal{A}_2)$ is \emph{monotonically independent with respect to $\varphi$} if, whenever $a_1,\ldots,a_n\in \mathcal{A}_1$ and $b_0,b_1,\ldots,b_n\in \mathcal{A}_2$:
\begin{equation}
\varphi(b_0a_1b_1a_2b_2\cdots a_nb_n)=\varphi(a_1a_2\cdots a_n)\varphi(b_0)\varphi(b_1)\cdots \varphi(b_{n-1})\varphi(b_n).
\end{equation}
\end{definition}

The monotone independence can be recovered from conditional freeness, as shown in~\cite[Proposition 3.1]{Franz2005}. We state this relation in the following proposition.

\begin{prop}\label{From_cfree_to_mon}
Let $(\mathcal{A},\tau,\varphi)$ be a conditional non-commutative probability space and let $\mathcal{A}_1,\mathcal{A}_2$ be two unital subalgebras which are conditionnally free  with respect to $(\tau,\varphi)$.

We assume that $\varphi$ coincides with $\tau$ on $\mathcal{A}_2$. Consider an ideal $\mathcal{I}_1$ of $\mathcal{A}_1$ such that $\mathcal{I}_1\subset \ker(\tau)$. Then $(\mathcal{I}_1,\mathcal{A}_2)$ is monotonically independent with respect to $\varphi$.
\end{prop}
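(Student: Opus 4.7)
The plan is to prove the monotone independence identity
\[
\varphi(b_0 a_1 b_1 \cdots a_n b_n) = \varphi(a_1 \cdots a_n)\,\varphi(b_0)\varphi(b_1)\cdots \varphi(b_n)
\]
by a direct multilinear expansion (the case $n=0$ being trivial). First I would center each $b_j$: write $b_j = b_j^\circ + \tau(b_j)\cdot 1$ with $b_j^\circ := b_j - \tau(b_j)\cdot 1 \in \mathcal{A}_2 \cap \ker\tau$. Substituting and expanding yields $2^{n+1}$ terms indexed by subsets $S \subseteq \{0,1,\ldots,n\}$: the term attached to $S$ is
\[
\Bigl(\prod_{j \notin S} \tau(b_j)\Bigr)\, \varphi(P_S),
\]
where $P_S$ is obtained from $b_0 a_1 b_1 \cdots a_n b_n$ by replacing $b_j$ with $b_j^\circ$ for $j \in S$ and with $1$ for $j \notin S$ (so that the $a_i$'s between two collapsed $b$'s multiply together).

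The key step is to show $\varphi(P_S) = 0$ for every nonempty $S$. When $S = \{j_1 < \cdots < j_k\}$, the element $P_S$ has the alternating form
\[
\alpha_0\, b_{j_1}^\circ\, \alpha_1\, b_{j_2}^\circ\, \cdots\, b_{j_k}^\circ\, \alpha_k,
\]
where each $\alpha_\ell$ is a (possibly empty) product of consecutive $a_i$'s. Because $\mathcal{I}_1$ is an ideal of $\mathcal{A}_1$ and hence a subalgebra, every nonempty $\alpha_\ell$ lies in $\mathcal{I}_1$; because $\mathcal{I}_1 \subset \ker\tau$, every such $\alpha_\ell$ is $\tau$-centered, as is each $b_{j_i}^\circ$ by construction. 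Omitting any empty boundary blocks $\alpha_0, \alpha_k$, the product $P_S$ is therefore a genuine alternating product of $\tau$-centered factors in $\mathcal{A}_1$ and $\mathcal{A}_2$, of length at least two whenever $n \geq 1$. Conditional freeness of $(\mathcal{A}_1, \mathcal{A}_2)$ with respect to $(\tau,\varphi)$ then gives $\varphi(P_S) = \prod \varphi(\text{factor})$, and since $\varphi = \tau$ on $\mathcal{A}_2$, each factor $\varphi(b_{j_i}^\circ) = \tau(b_{j_i}^\circ) = 0$, killing the whole product.

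Only the $S = \emptyset$ term survives, and it reads
\[
\Bigl(\prod_{j=0}^{n} \tau(b_j)\Bigr)\, \varphi(a_1 a_2 \cdots a_n) = \varphi(a_1 \cdots a_n)\prod_{j=0}^n \varphi(b_j),
\]
using once more $\tau = \varphi$ on $\mathcal{A}_2$. I do not anticipate a serious obstacle; the only delicate points are (i) checking that after the collapses the retained factors still form an alternating sequence in $(\mathcal{A}_1, \mathcal{A}_2)$ regardless of which boundary $b$'s are kept or absorbed, and (ii) verifying that every merged block of $a_i$'s is $\tau$-centered — both are immediate from $\mathcal{I}_1$ being an ideal contained in $\ker\tau$.
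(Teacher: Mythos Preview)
Your proof is correct and follows essentially the same approach as the paper: center the $b_j$'s, expand multilinearly, and use conditional freeness together with $\varphi=\tau$ on $\mathcal{A}_2$ to kill every term except the one where all $b_j$'s are replaced by scalars. The only difference is cosmetic: the paper centers just the inner $b_1,\ldots,b_{n-1}$ and invokes the extended formula \eqref{eq:cond_freeness_bis} to handle the uncentered boundary factors $b_0,b_n$, whereas you center all of $b_0,\ldots,b_n$ and use the basic conditional freeness identity directly---your bookkeeping is in fact slightly cleaner.
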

\begin{proof}If $a_1\in \mathcal{I}_1$ and $b_0,b_1\in \mathcal{A}_2$, we know that $\tau(a_1)=0$ because $\mathcal{I}_1\subset \ker(\tau)$. Thanks to \eqref{eq:cond_freeness_bis}, we have
\begin{equation}
\varphi(b_0a_1b_1)=\varphi(b_0)\varphi(a_1)\varphi(b_1).\label{two_terms_mon}
\end{equation}
Now, let $n> 1$, $a_1,\ldots,a_n\in \mathcal{I}_1$ and $b_0,c_1,\ldots,c_{n-1},b_n\in \mathcal{A}_2$ such that $\tau(c_1)=\ldots=\tau(c_{n-1})=0$. We also have $\tau(a_1)=\ldots = \tau(a_n)$ because $\mathcal{I}_1\subset \ker(\tau)$. We know thanks to~\eqref{type_B_ter} that
\begin{equation}\varphi(b_0a_1c_1a_2\cdots c_{n-1}a_nb_n)=\varphi(b_0)\varphi(a_1)\varphi(c_1)\varphi(a_2)\cdots \varphi(c_{n-1})\varphi(a_n)\varphi(b_n)=0.\label{more_than_two_terms_mon}\end{equation}
where we used that $\varphi(c_1)=\tau(c_1)=0.$

Let $n> 1$, $a_1,\ldots,a_n\in \mathcal{I}_1$ and $b_0,b_1,\ldots,b_n\in \mathcal{A}_2$, and set $c_i=b_i-\tau(b_i)$ for $1\leq i \leq n-1$. We compute
\begin{align*}
&\varphi(b_0a_1b_1a_2b_2\cdots a_nb_n)\\
=&\varphi(b_0a_1(c_1+\tau(b_1))a_2\cdots (c_{n-1}+\tau(b_{n-1}))a_nb_n)\\
=&\varphi(b_0a_1c_1a_2\cdots c_{n-1}a_nb_n)\\
&+\sum_{r=1}^n\sum_{1\leq k_1<\cdots<k_r\leq n-1}\varphi(b_0a_1c_1a_2\cdots \hat{c}_{k_1}\cdots \hat{c}_{k_r}\cdots c_{n-1}a_nb_n)\tau(b_{k_1})\cdots\tau(b_{k_r})
\end{align*}
where $\hat{c}_i$ indicates terms that are omitted. Using~\eqref{more_than_two_terms}, most of the terms vanish in the sum and we get
\begin{align*}
\varphi(b_0a_1b_1a_2b_2\cdots a_nb_n)&
=0+\varphi(b_0a_1a_2\cdots a_nb_n)\tau(b_1)\cdots \tau(b_{n-1})\\
&\varphi(b_0a_1a_2\cdots a_nb_n)\varphi(b_1)\cdots \varphi(b_{n-1}).\end{align*}
We conclude thanks to~\eqref{two_terms}:
$$\varphi(b_0a_1b_1a_2b_2\cdots a_nb_n)=\varphi(a_1a_2\cdots a_n)\varphi(b_0)\varphi(b_1)\cdots \varphi(b_{n-1})\varphi(b_n).
$$
\end{proof}
\subsection{Monotone cyclic independence}
The cyclic monotone independence was introduced in~\cite{Collins2018} as a replacement of monotone independence in order to describe the behaviour of some random matrices with respect to the non-normalized trace (see also the recent~\cite{Arizmendi2022} for an alternate point of view on cyclic monotone independence).
\begin{definition}Let $(\mathcal{C},\tau)$ be a non-commutative probability space. We consider a (non-necessarily unital) subalgebra $\mathcal{A}\subset \mathcal{C}$ and a unital subalgebra $\mathcal{B}\subset \mathcal{C}$. Set $$\mathcal{I}:=Span\{b_0a_1b_1\cdots a_nb_n:n\geq 1, a_1,\ldots,a_n\in \mathcal{A}, b_0,\ldots,b_n\in \mathcal{B}\}$$ and consider
a linear functional $\omega:\mathcal{I}\to \mathbb{C}$.
We say that the pair $(\mathcal{A},\mathcal{B})$ is \emph{cyclically monotone} with respect to $(\omega,\tau)$ if, whenever $a_1,\ldots,a_n\in \mathcal{A}$ and $b_0,b_1,\ldots,b_n\in \mathcal{B}$:
\begin{equation}
\omega(b_0a_1b_1a_2b_2\cdots a_nb_n)=\omega(a_1a_2\cdots a_n)\tau(b_1)\tau(b_2)\cdots \tau(b_{n-1})\tau(b_0b_n).\label{Eq:cyclic_monotone}
\end{equation}
\end{definition}

As conditional freeness is a consequence of freeness of type $B$ when looking at vector state (see Proposition~\ref{Prop:cfreeness}), monotone independence is a consequence of cyclic monotone independence when looking at vector state.

\begin{prop}\label{Prop:mon}
Let $(\mathcal{C},\tau)$ be a non-commutative probability space. We consider a (non-necessarily unital) subalgebra $\mathcal{A}\subset \mathcal{C}$ and a unital subalgebra $\mathcal{B}\subset \mathcal{C}$ such that $\mathcal{C}$ is generated by $\mathcal{A}\cup\mathcal{B}$. Set $$\mathcal{I}:=Span\{b_0a_1b_1\cdots a_nb_n:n\geq 1, a_1,\ldots,a_n\in \mathcal{A}, b_0,\ldots,b_n\in \mathcal{B}\}$$ and consider
a linear functional $\omega:\mathcal{I}\to \mathbb{C}$ such that $(\mathcal{A},\mathcal{B})$ is cyclically monotone with respect to $(\omega,\tau)$.

Let $p\in \mathcal{A}$ such that $\omega(p)\neq 0$. Consider the non-commutative probability space $(\mathcal{C},\varphi)$ where $\varphi:\mathcal{C}\to \mathbb{C}$ is defined by $$\varphi(a):=\frac{1}{\omega(p)}\omega(p a ).$$    Then, $\varphi$ coincides with $\tau$ on $\mathcal{B}$, and $(\mathcal{A},\mathcal{B})$ are monotonically independent with respect to $\varphi$.
 \end{prop}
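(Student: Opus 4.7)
My plan is to mimic the proof of Proposition~\ref{Prop:cfreeness}, absorbing $p$ into the leading $a$-slot so that a single application of the cyclic-monotonicity relation~\eqref{Eq:cyclic_monotone} does all the work.

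First I would check that $\varphi$ is a well-defined unital linear functional on $\mathcal{C}$. Since $\mathcal{C}$ is generated by $\mathcal{A}\cup\mathcal{B}$ and $\mathcal{B}$ is unital, every element of $\mathcal{C}$ is a linear combination of alternating words $b_0 a_1 b_1\cdots a_k b_k$ with $k\geq 0$; left-multiplying such a word by $p\in\mathcal{A}$ turns it into $1\cdot p\cdot b_0\cdot a_1\cdots a_k\cdot b_k$, which lies in $\mathcal{I}$. Hence $\omega(pa)$ is defined for every $a\in\mathcal{C}$, and $\varphi(1)=\omega(p)/\omega(p)=1$, where I use that $p=1\cdot p\cdot 1\in\mathcal{I}$.

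Next, for the agreement of $\varphi$ with $\tau$ on $\mathcal{B}$: given $b\in\mathcal{B}$, I apply~\eqref{Eq:cyclic_monotone} to the word $1\cdot p\cdot b$ (the case $n=1$, $b_0=1$, $a_1=p$, $b_1=b$, with empty middle $\tau$-product) to obtain $\omega(pb)=\omega(p)\tau(b)$, i.e.\ $\varphi(b)=\tau(b)$. For the monotone-independence identity, I take $a_1,\ldots,a_n\in\mathcal{A}$ and $b_0,b_1,\ldots,b_n\in\mathcal{B}$ and read
$$p\cdot b_0a_1b_1\cdots a_nb_n = 1\cdot p\cdot b_0\cdot a_1\cdot b_1\cdots a_n\cdot b_n$$
as a cyclically monotone word of length $n+1$ in the $a$-letters, with leading $b$-letter equal to $1$ and trailing $b$-letter equal to $b_n$. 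Applying~\eqref{Eq:cyclic_monotone} yields
$$\omega(pb_0a_1b_1\cdots a_nb_n) = \omega(pa_1a_2\cdots a_n)\,\tau(b_0)\tau(b_1)\cdots\tau(b_{n-1})\tau(b_n).$$
Dividing by $\omega(p)$ transforms the first factor into $\varphi(a_1\cdots a_n)$ by definition of $\varphi$, and, using $\varphi|_{\mathcal{B}}=\tau$ already established, turns each $\tau(b_i)$ into $\varphi(b_i)$, giving exactly the monotone independence relation.

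I do not expect any genuine obstacle: the argument is a one-line manipulation of~\eqref{Eq:cyclic_monotone}, structurally identical to Proposition~\ref{Prop:cfreeness}. The only subtlety is the well-definedness of $\varphi$ on all of $\mathcal{C}$, which relies essentially on the unitality of $\mathcal{B}$ and the hypothesis that $\mathcal{C}$ is generated by $\mathcal{A}\cup\mathcal{B}$; both allow arbitrary elements of $\mathcal{C}$ to be rewritten, after multiplication by $p$, as elements of $\mathcal{I}$.
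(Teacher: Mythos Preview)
Your argument is correct and is exactly the paper's proof: absorb $p$ as the leading $a$-letter, apply~\eqref{Eq:cyclic_monotone} to the resulting length-$(n+1)$ word with $b_0'=1$, and use $\tau(1\cdot b_n)=\tau(b_n)$. The only addition is your explicit check that $\varphi$ is well defined on all of $\mathcal{C}$, which the paper leaves implicit.
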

\begin{proof}Let $b\in \mathcal{B}$. Thanks to~\eqref{Eq:cyclic_monotone}, we compute
$$\varphi(b)=\frac{1}{\omega(p)}\omega(p b )=\frac{1}{\omega(p)}\omega(p)\tau(b)=\tau(b)$$
and infer that
$\varphi$ coincides with $\omega$ on $\mathcal{B}$.

Let $a_1,\ldots,a_n\in \mathcal{A}$ and $b_0,b_1,\ldots,b_n\in \mathcal{B}$. Thanks to~\eqref{Eq:cyclic_monotone}, we compute
\begin{align*}
\varphi(b_0a_1b_1a_2b_2\cdots a_nb_n)&=\frac{1}{\omega(p)}\omega(pb_0a_1b_1a_2b_2\cdots a_nb_n)\\
&=\frac{1}{\omega(p)}\omega(pa_1\cdots a_n)\tau(b_0)\cdots \tau(b_n)\\
&=\varphi(a_1\cdots a_n)\varphi(b_0)\cdots \varphi(b_n).
\end{align*}
\end{proof}

Infinitesimal freeness yields  monotone cyclic independence if we look at the kernel of $\tau$, as shown in~\cite[Proposition 3.10]{Collins2018}. Similarly, freeness of type $B$ yields monotone cyclic independence, and we provide a proof below.

\begin{prop}Let $\mathcal{A}$ be a unital algebra generated by two unital subalgebras $\mathcal{A}_1,\mathcal{A}_2$. Let $\mathcal{I}_1$ (respectively $\mathcal{I}_2$) be an ideal of $\mathcal{A}_1$ (respectively of $\mathcal{A}_2$) and $\mathcal{I}$ be the ideal of $\mathcal{A}$ generated by $\mathcal{I}_1\cup \mathcal{I}_2$. We consider a unital linear functional $\tau: \mathcal{A}\to \mathbb{C}$ such that $\mathcal{I}\subset \ker(\tau)$ and a linear functional $\tau': \mathcal{I}\to \mathbb{C}$ such that $(\mathcal{A}_1,\mathcal{I}_1)$ and $(\mathcal{A}_2,\mathcal{I}_2)$ are free of type $B$ in the non-commutative probability space $(\mathcal{A},\tau,\mathcal{I},\tau')$ of type $B$.\label{from_typeB_to_mon}

Then, the pair $(\mathcal{I}_1,\mathcal{A}_2)$ is cyclically monotone with respect to $(\tau',\tau)$.
\end{prop}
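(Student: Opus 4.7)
The plan is to derive the cyclic monotone identity
\[
\tau'(b_0 a_1 b_1 \cdots a_n b_n) = \tau'(a_1 \cdots a_n)\,\tau(b_1)\cdots\tau(b_{n-1})\,\tau(b_0 b_n),
\]
for $a_i \in \mathcal{I}_1$ and $b_i \in \mathcal{A}_2$, directly from the type $B$ identity \eqref{type_B_ter} by a centering argument. The trick is to center only the \emph{inner} factors $b_1,\ldots,b_{n-1}$, leaving the endpoints $b_0,b_n$ untouched so that they eventually reassemble into the factor $\tau(b_0 b_n)$.

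More concretely, I would first write $b_i = \tilde b_i + \tau(b_i)$ for $1 \le i \le n-1$ (with $\tau(\tilde b_i)=0$) and expand multilinearly. Each resulting term is indexed by a subset $S \subseteq \{1,\ldots,n-1\}$: positions $j \in S$ keep the centered factor $\tilde b_j$, while positions $j \notin S$ contribute the scalar $\tau(b_j)$ and cause the adjacent $a_j$ and $a_{j+1}$ to merge into a single element of $\mathcal{I}_1$ (an ideal of $\mathcal{A}_1$). This yields a sum $\sum_S \bigl(\prod_{j \notin S} \tau(b_j)\bigr)\, T_S$ with $T_S = \tau'(b_0 A_1 \tilde b_{j_1} A_2 \cdots A_{|S|+1} b_n)$ and each $A_k \in \mathcal{I}_1$. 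Applying \eqref{type_B_ter} with $v = A_1$ — the alternation of surrounding indices is immediate, and every interior factor on either side of $A_1$ lies in $\ker\tau$ (either it is one of the $\tilde b_{j_k}$, or it is an $A_k \in \mathcal{I}_1 \subset \ker\tau$) — gives $T_S = \tau'(A_1)\,\tau(b_0 \tilde b_{j_1} A_2 \cdots A_{|S|+1} b_n)$. For $S = \emptyset$ this produces exactly $T_\emptyset = \tau'(a_1 \cdots a_n)\,\tau(b_0 b_n)$, which is the target right-hand side.

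The key step, and main obstacle, is to show that $T_S = 0$ whenever $S \neq \emptyset$. For such $S$, the inner $\tau$-moment begins with $b_0 \tilde b_{j_1}$, two consecutive elements of $\mathcal{A}_2$ that merge into a single element of $\mathcal{A}_2$; the resulting word $(b_0 \tilde b_{j_1})\,A_2\,\tilde b_{j_2}\,\cdots\,A_{|S|+1}\,b_n$ is alternating in $\mathcal{A}_2,\mathcal{A}_1$ with every interior entry in $\ker\tau$. Centering also the two endpoints $b_0 \tilde b_{j_1}$ and $b_n$ (splitting each as centered part plus scalar) and expanding multilinearly reduces the computation to $\tau$-moments of fully alternating \emph{centered} words of positive length, each of which vanishes by Voiculescu's freeness of $\mathcal{A}_1$ and $\mathcal{A}_2$ with respect to $\tau$ — itself part of the type $B$ hypothesis. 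The edge case $n = 1$ requires no centering and follows from a single application of \eqref{type_B_ter}. Collecting the surviving $S = \emptyset$ contribution then yields the claimed cyclic monotone formula.
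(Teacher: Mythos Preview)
Your proof is correct and follows essentially the same route as the paper's: center the inner factors $b_1,\ldots,b_{n-1}$, expand, apply \eqref{type_B_ter} with the first $\mathcal{I}_1$-block playing the role of $v$, and show that every term but the $S=\emptyset$ one vanishes. The only difference is in how you kill the $S\neq\emptyset$ terms: you invoke Voiculescu freeness of $\mathcal{A}_1,\mathcal{A}_2$ after centering the endpoints, while the paper simply observes that for $S\neq\emptyset$ the remaining $\tau$-moment contains a factor $A_2\in\mathcal{I}_1\subset\mathcal{I}\subset\ker\tau$, so the entire product lies in $\mathcal{I}$ and $\tau$ of it is zero --- a one-line shortcut that bypasses the extra centering step.
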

By the symmetry of the situation, the pair $(\mathcal{I}_2,\mathcal{A}_1)$ is also cyclically monotone with respect to $(\tau',\tau)$ in the situation of the previous proposition.
\begin{proof}If $a_1\in \mathcal{I}_1$ and $b_0,b_1\in \mathcal{A}_2$, we know that $\tau(a_1)=0$ because $\mathcal{I}_1\subset \ker(\tau)$. Thanks to \eqref{type_B_ter}, we have
\begin{equation}
\tau'(b_0a_1b_1)=\tau(b_0\tau'(a_1)b_1).\label{two_terms}
\end{equation}
Now, let $n> 1$, $a_1,\ldots,a_n\in \mathcal{I}_1$ and $b_0,c_1,\ldots,c_{n-1},b_n\in \mathcal{A}_2$ such that $\tau(c_1)=\ldots=\tau(c_{n-1})=0$. We also have $\tau(a_1)=\ldots = \tau(a_n)$ because $\mathcal{I}_1\subset \ker(\tau)$. We know thanks to~\eqref{type_B_ter} that
\begin{equation}\tau'(b_0a_1c_1a_2\cdots c_{n-1}a_nb_n)=\tau(b_0\tau'(a_1)c_1a_2\cdots c_{n-1}a_nb_n)=0.\label{more_than_two_terms}\end{equation}
where we used that $b_0c_1a_2\cdots a_nb_n\in\mathcal{I}\subset \ker(\tau).$

Let $n> 1$, $a_1,\ldots,a_n\in \mathcal{I}_1$ and $b_0,b_1,\ldots,b_n\in \mathcal{A}_2$, and set $c_i=b_i-\tau(b_i)$ for $1\leq i \leq n-1$. We compute
\begin{align*}
&\tau'(b_0a_1b_1a_2b_2\cdots a_nb_n)\\
=&\tau'(b_0a_1(c_1+\tau(b_1))a_2\cdots (c_{n-1}+\tau(b_{n-1}))a_nb_n)\\
=&\tau'(b_0a_1c_1a_2\cdots c_{n-1}a_nb_n)\\
&+\sum_{r=1}^n\sum_{1\leq k_1<\cdots<k_r\leq n-1}\tau'(b_0a_1c_1a_2\cdots \hat{c}_{k_1}\cdots \hat{c}_{k_r}\cdots c_{n-1}a_nb_n)\tau(b_{k_1})\cdots\tau(b_{k_r})
\end{align*}
where $\hat{c}_i$ indicates terms that are omitted. Using~\eqref{more_than_two_terms}, most of the terms vanish in the sum and we get
$$
\tau'(b_0a_1b_1a_2b_2\cdots a_nb_n)
=0+\tau'(b_0a_1a_2\cdots a_nb_n)\tau(b_1)\tau(b_2)\cdots \tau(b_{n-1}).$$
We conclude thanks to~\eqref{two_terms}:
$$\tau'(b_0a_1b_1a_2b_2\cdots a_nb_n)=\tau'(a_1a_2\cdots a_n)\tau(b_1)\tau(b_2)\cdots \tau(b_{n-1})\tau(b_0b_n).
$$
\end{proof}
\subsection{Computation of convolutions}\label{Sec:convolutions}
Let $(\mathcal{A},\tau)$ be a probability space. If $\mathcal{A}$ is a $*$-algebra and $\tau$ a state, i.e. in addition to $\tau(1)=1$ also positive ($\tau(aa^*)\geq 0$ for all $a\in \mathcal{A}$), then we call $(\mathcal{A},\tau)$ a $*$-probability space. A law of a self-adjoint element $a=a^*\in \mathcal{A}$ (with respect to $\tau$) is any probability measure $\mu$ on $\mathbb{R}$, such that, for all $k\in \mathbb{R}$,
$$\int_\mathbb{R}x^k\mu(dx)=\tau(a^k). $$
When the moment sequence $(\tau(a^k))_{k\geq 1}$ is determinate,  we will speak about \emph{the law} $\mu$ of a self-adjoint element $a=a^*\in \mathcal{A}$ with respect to $\tau$ (for example if the support of $\mu$ is bounded).

We review now the different notions of convolutions. For simplicity, we first only consider probability measures with compact supports. Let $(\mathcal{A},\tau)$ be a $*$-probability space, and $a_1$ and $a_2$ be two self-adjoint elements of $\mathcal{A}$ with respective laws $\mu_1$ and $\mu_2$ (with respect to $\tau$) with compact supports. We denote by $\mathcal{A}_1$ the algebra generated by $a_1$, by $\mathcal{I}_1$ the ideal of $\mathcal{A}_1$ generated by $a_1$ and by $\mathcal{A}_2$ the algebra generated by $a_2$.

The \emph{free convolution} $\mu_1\boxplus\mu_2$ is the law of $a_1+a_2$  with respect to $\tau$ whenever $\mathcal{A}_1$ and $\mathcal{A}_2$ are free with respect to $\tau$. It is uniquely defined by the pair $(\mu_1,\mu_2)$.

The \emph{monotone convolution} $\mu_1\rhd \mu_2$ is the law of $a_1+a_2$  with respect to $\tau$ whenever $(\mathcal{I}_1,\mathcal{A}_2)$ is monotonically independent with respect to $\tau$. It is uniquely defined by the pair $(\mu_1,\mu_2)$.

Now consider another state $\varphi:\mathcal{A}\to \mathbb{C}$, and $\nu_1$ and $\nu_2$ be the respective laws of $a_1$ and $a_2$ (with respect to $\varphi$) with compact supports. The \emph{conditionally free convolution} $\nu_1\prescript{}{\mu_1}{\boxplus}^{}_{\mu_2} \nu_2$ is the law of $a_1+a_2$  with respect to $\varphi$ whenever $\mathcal{A}_1$ and $\mathcal{A}_2$ are conditionally free with respect to $(\tau,\varphi)$. It is uniquely defined by the measures $(\nu_1,\nu_2,\mu_1,\mu_2)$.

We review now the complex analytic characterizations of free convolutions, and extend the definition of $\mu_1\boxplus\mu_2$, $\mu_1\rhd\mu_2$ and $\nu_1\prescript{}{\mu_1}{\boxplus}^{}_{\mu_2} \nu_2$ for any probability measures (with possibly unbounded support), as in \cite{Bercovici1993,Muraki2000,Belinschi2008}. Let $\mathbb{C}^+$ be the set of complex numbers $z$ such that $im(z)>0$. The Cauchy transform $$ G_\mu(z) =\int_\mathbb{R}\frac{\mu(dx)}{z-x},\ \  z\in \mathbb{C}^+$$ of a probability measure $\mu$ plays a crucial role for the computation of free, monotone or conditionally free convolutions.  We define also its reciprocal $$F_\mu(z) =\frac{1}{G_\mu(z)},\ \  z\in \mathbb{C}^+.$$Let $\mu_1$ and $\mu_2$ be two probability measures on $\mathbb{R}$.  The following subordination properties are due to~\cite{Biane1998b,Bercovici1998} (see also~\cite[Theorem 3.43]{Mingo2018}). There exist two unique analytic functions $\omega_1,\omega_2:\mathbb{C}^+\to \mathbb{C}^+$ such that
$$F_{\mu_1}(\omega_1(z))=F_{\mu_2}(\omega_2(z))$$
and $$\omega_1(z)+\omega_2(z)=z+F_{\mu_1}(\omega_1(z)).$$
The function $\omega_1$ (respectively $\omega_2$) is called the \emph{subordination function} of $\mu_1$ (resp. of $\mu_2$) with respect to $\mu_1\boxplus\mu_2$. The analytic function $\omega_1$ is known to extend meromorphically to the complement $\mathbb{C}\setminus supp(\mu_1\boxplus\mu_2)$ of the support of $\mu_1\boxplus\mu_2$ (see \cite[Lemma 2.3]{bercovici2008freely}, or \cite[Lemma 3.1]{belinschi2017}). Moreover, $\omega_1$ is real-valued on $\mathbb{R}\setminus supp(\mu_1\boxplus\mu_2)$ and for all $x\in \mathbb{R} \setminus supp(\mu_1\boxplus\mu_2)$, we have
$$\omega_1(x)\in \mathbb{R}\cup \{\infty\} \setminus supp(\mu_1).$$
\begin{definition}
The \emph{free convolution} $\mu_1\boxplus\mu_2$ is the unique probability measure on $\mathbb{R}$ such that
$$F_{\mu_1\boxplus\mu_2}(z)=F_{\mu_1}(\omega_1(z))=F_{\mu_2}(\omega_2(z)).$$
The \emph{monotone convolution} $\mu_1\rhd\mu_2$ is the unique probability measure on $\mathbb{R}$ such that
$$F_{\mu_1\rhd\mu_2}(z)=F_{\mu_1}(F_{\mu_2}(z)).$$
Let $\nu_1$ and $\nu_2$ be two probability measures on $\mathbb{R}$. The \emph{conditionally free convolution} $\nu_1\prescript{}{\mu_1}{\boxplus}^{}_{\mu_2} \nu_2$ is the unique probability measure on $\mathbb{R}$ such that
$$F_{\nu_1\prescript{}{\mu_1}{\boxplus}^{}_{\mu_2} \nu_2}(z)=F_{\nu_1}(\omega_1(z))+F_{\nu_2}(\omega_2(z))-F_{\mu_1\boxplus \mu_2}(z). $$
\end{definition}
Whenever $\mu_2=\nu_2$, we have $F_{\nu_2}(\omega_2(z))=F_{\mu_2}(\omega_2(z))=F_{\mu_1\boxplus \mu_2}(z)$. In particular,
$$F_{\nu_1\prescript{}{\mu_1}{\boxplus}^{}_{\mu_2} \mu_2}(z)=F_{\nu_1}(\omega_1(z)).$$Moreover, if $\mu_1=\delta_0$, we have $\omega_1=F_{\mu_2}$. It implies the following result, which corresponds to Corollary 2.2 of \cite{Hasebe2010}.

\begin{lemme}\label{lemma_conv}For probability measures $\mu_1, \nu_1, \mu_2$ on $\mathbb{R}$, the conditionally free convolution $\nu_1\prescript{}{\mu_1}{\boxplus}^{}_{\mu_2} \mu_2$ is characterized by
$$G_{\nu_1\prescript{}{\mu_1}{\boxplus}^{}_{\mu_2} \mu_2}(z)=G_{\nu_1}(\omega_1(z)).$$
Moreover, if $\mu_1=\delta_0$,
$$G_{\nu_1\prescript{}{\delta_0}{\boxplus}^{}_{\mu_2} \mu_2}(z)=G_{\nu_1}(F_{\mu_2}(z))=G_{\nu_1\rhd\mu_2}(z).$$
In particular, $\nu_1\prescript{}{\delta_0}{\boxplus}^{}_{\mu_2} \mu_2=\nu_1\rhd\mu_2$.
\end{lemme}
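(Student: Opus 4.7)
The proof is essentially a direct computation from the definition of conditionally free convolution together with the subordination identities stated just before the lemma, so the plan is mostly bookkeeping.

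First I would establish the identity for $G_{\nu_1\prescript{}{\mu_1}{\boxplus}^{}_{\mu_2} \mu_2}$. Taking $\nu_2=\mu_2$ in the defining formula
\[
F_{\nu_1\prescript{}{\mu_1}{\boxplus}^{}_{\mu_2} \nu_2}(z)=F_{\nu_1}(\omega_1(z))+F_{\nu_2}(\omega_2(z))-F_{\mu_1\boxplus \mu_2}(z),
\]
the middle term becomes $F_{\mu_2}(\omega_2(z))$, which equals $F_{\mu_1\boxplus\mu_2}(z)$ by the characterization of the free convolution. These two terms cancel, leaving
\[
F_{\nu_1\prescript{}{\mu_1}{\boxplus}^{}_{\mu_2} \mu_2}(z)=F_{\nu_1}(\omega_1(z)),
\]
and taking reciprocals yields $G_{\nu_1\prescript{}{\mu_1}{\boxplus}^{}_{\mu_2} \mu_2}(z)=G_{\nu_1}(\omega_1(z))$. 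This is the first claim.

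For the special case $\mu_1=\delta_0$, I would identify the subordination functions explicitly. Since $G_{\delta_0}(z)=1/z$, one has $F_{\delta_0}(z)=z$. Plugging this into the two defining equations of the subordination functions, namely
\[
F_{\mu_1}(\omega_1(z))=F_{\mu_2}(\omega_2(z)),\qquad \omega_1(z)+\omega_2(z)=z+F_{\mu_1}(\omega_1(z)),
\]
the second relation simplifies to $\omega_2(z)=z$, and then the first gives $\omega_1(z)=F_{\mu_2}(z)$. Substituting into the formula from the first part of the lemma,
\[
G_{\nu_1\prescript{}{\delta_0}{\boxplus}^{}_{\mu_2} \mu_2}(z)=G_{\nu_1}(F_{\mu_2}(z)).
\]

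Finally, I would compare with the monotone convolution. By definition $F_{\nu_1\rhd \mu_2}(z)=F_{\nu_1}(F_{\mu_2}(z))$, and taking reciprocals gives $G_{\nu_1\rhd \mu_2}(z)=G_{\nu_1}(F_{\mu_2}(z))$, which coincides with the expression just obtained. Since a probability measure on $\mathbb{R}$ is uniquely determined by its Cauchy transform on $\mathbb{C}^+$, the equality of the two measures $\nu_1\prescript{}{\delta_0}{\boxplus}^{}_{\mu_2}\mu_2=\nu_1\rhd\mu_2$ follows at once. There is no real obstacle here; the only point to keep straight is the symmetry convention $(\mu_1,\mu_2)\leftrightarrow(\omega_1,\omega_2)$ so that the correct $\omega_i$ is matched with the correct $\mu_i$ when substituting $\mu_1=\delta_0$.
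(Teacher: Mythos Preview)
Your proof is correct and follows essentially the same approach as the paper: the paper derives $F_{\nu_1\prescript{}{\mu_1}{\boxplus}^{}_{\mu_2} \mu_2}(z)=F_{\nu_1}(\omega_1(z))$ from the cancellation when $\nu_2=\mu_2$, then notes that $\mu_1=\delta_0$ forces $\omega_1=F_{\mu_2}$, exactly as you do. Your argument is slightly more explicit in deriving $\omega_2(z)=z$ and then $\omega_1(z)=F_{\mu_2}(z)$ from the two subordination relations, but this is just a spelled-out version of the same computation.
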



%
%

\section{Asymptotic freeness of random matrices \label{sec:asympfree}}

From now on, the normalized trace $\frac{1}{N} \Tr$ is denoted by $\tr_N$.

\begin{definition}Let $\mathbf{A}_N$  be a  $k$-tuple $\mathbf{A}_N=(A_N^{(1)},\ldots,A_N^{(k)})$ of $N\times N$ matrices. The  \emph{non-commutative distribution} (or n.c. distribution) of $\mathbf{A}_N$ is the linear functional $$ \begin{array}{rcl}\tau_{\mathbf{A}_N}:\mathbb{C}\langle X_1,\ldots,X_k\rangle &\lto& \mathbb{C} \\
P&\lmto& \tr_N\left( P(A_N^{(1)},\ldots,A_N^{(k)})\right). \end{array}$$
If $\mathbf{B}_N$  is another  $\ell$-tuple $\mathbf{B}_N=(B_N^{(1)},\ldots,B_N^{(\ell)})$ of $N\times N$ matrices, their \emph{joint distribution} $\tau_{\mathbf{A}_N,\mathbf{B}_N}$ is the linear functional
$$ \begin{array}{rcl}\tau_{\mathbf{A}_N,\mathbf{B}_N}:\mathbb{C}\langle X_1,\ldots,X_k,Y_1,\ldots,Y_{\ell}\rangle &\lto& \mathbb{C} \\
P&\lmto& \tr_N\left( P(A_N^{(1)},\ldots,A_N^{(k)},B_N^{(1)},\ldots,B_N^{(\ell)})\right). \end{array}$$
The \emph{free product} $\tau_{\mathbf{A}_N}\star \tau_{\mathbf{B}_N}$ of their non-commutative distribution $\tau_{\mathbf{A}_N}$ and $\tau_{\mathbf{B}_N}$ is the unique linear functional on $\mathbb{C}\langle X_1,\ldots,X_k,Y_1,\ldots,Y_{\ell}\rangle$ such that $\tau_{\mathbf{A}_N}\star \tau_{\mathbf{B}_N}$ coincides with $\tau_{\mathbf{A}_N,\mathbf{B}_N}$ on $\mathbb{C}\langle X_1,\ldots,X_{k}\rangle \cup \mathbb{C}\langle Y_1,\ldots,Y_{\ell}\rangle$, and $\mathbb{C}\langle X_1,\ldots,X_{k}\rangle$ and $\mathbb{C}\langle Y_1,\ldots,Y_{\ell}\rangle$ are free with respect to $\tau_{\mathbf{A}_N}\star \tau_{\mathbf{B}_N}$. 
\end{definition}
\subsection{Asymptotic infinitesimal freeness}

\begin{definition}
  We call a sequence $(\mathbf{A}_N)_{N\geq 1}$ of $k$-tuple $\mathbf{A}_N=(A_N^{(1)},\ldots,A_N^{(k)})$ of $N\times N$  matrices \emph{bounded in non-commutative distribution} (or in n.c. distribution) if  for  all polynomials $P \in \mathbb{C}\langle X_1,\ldots,X_k\rangle$,
$$\sup_{N\ge 1} | \tr_N\left(P(\mathbf{A}_N\right) | <+\infty.$$
\end{definition}

We first state the freeness up to order $O(N^{-2})$ without any hypothesis of convergence for the family of random matrices.
\begin{theorem}\label{Mainth}
Let $(\mathbf{A}_N)_{N\geq 1}$ and $(\mathbf{B}_N)_{N\geq 1}$ be two deterministic sequences of $k$-tuples and $\ell$-tuples of $N\times N$ matrices, each of them being bounded in n.c. distribution.  Let $(U_N)_{N\geq 1}$ be a sequence of Haar distributed unitary  random matrices of size $N$.

Then, for all $P\in \mathbb{C}\langle X_1,\ldots,X_{k},Y_1,\ldots,Y_{\ell}\rangle$,
$$\EE[\tau_{\mathbf{A}_N, U_N\mathbf{B}_NU_N^*}(P)]=\tau_{\mathbf{A}_N}\star \tau_{\mathbf{B}_N}(P)+O(N^{-2}).$$ In other words, $\mathbb{C}\langle X_1,\ldots,X_{k}\rangle$ and $\mathbb{C}\langle Y_1,\ldots,Y_{\ell}\rangle$ are free up to order $O(N^{-2})$ with respect to $\EE[\tau_{\mathbf{A}_N, U_N\mathbf{B}_NU_N^*}]$. 
\end{theorem}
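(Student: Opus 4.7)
The plan is to follow Biane's approach from Section~9 of \cite{Biane1998} while carefully tracking the $N^{-2}$ correction. By the defining property of the free product $\tau_{\mathbf{A}_N}\star\tau_{\mathbf{B}_N}$ and by linearity, it suffices to establish, for every alternating monomial
\[
P = p_1(\mathbf{A})\, q_1(\mathbf{B})\, p_2(\mathbf{A})\, q_2(\mathbf{B}) \cdots p_n(\mathbf{A})\, q_n(\mathbf{B})
\]
with $\tau_{\mathbf{A}_N}(p_i) = \tau_{\mathbf{B}_N}(q_i) = 0$, the estimate $\EE[\tau_N(P(\mathbf{A}_N, U_N \mathbf{B}_N U_N^*))] = O(N^{-2})$. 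The boundedness in n.c.-distribution translates into uniform bounds $|\tr_N(A_{i_1}\cdots A_{i_k})| \leq C^k$ and similarly for the $B_j$'s, which will control every error term. A standard centering then extends the alternating-centered statement to arbitrary polynomials $P$.

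Writing $A_i := p_i(\mathbf{A}_N)$ and $B_i := q_i(\mathbf{B}_N)$, the Weingarten formula for the Haar integral yields
\[
\EE\bigl[\Tr(A_1 U_N B_1 U_N^* \cdots A_n U_N B_n U_N^*)\bigr] = \sum_{\sigma,\, \tau \in S_n} \Wg(\sigma\tau^{-1}, N)\, \Tr_\sigma(A_\bullet)\, \Tr_{\tau\gamma}(B_\bullet),
\]
where $\gamma = (1\,2\,\cdots\,n)$ and $\Tr_\pi(M_\bullet) = \prod_{c \in \pi}\Tr\bigl(\prod_{i \in c} M_i\bigr)$ is the trace factorized along cycles of $\pi$. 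Using $\Tr_\pi = N^{c(\pi)}\tr_\pi$ together with the genus expansion $\Wg(\pi, N) = N^{-n-|\pi|}\bigl(\M(\pi) + O(N^{-2})\bigr)$, with $|\pi| = n - c(\pi)$ and $\M$ multiplicative over cycles, each pair $(\sigma, \tau) \in S_n^2$ contributes to $\EE[\tau_N(\cdot)]$ a term of magnitude
\[
N^{\,(n-1)\,-\,|\sigma|\,-\,|\tau\gamma|\,-\,|\sigma\tau^{-1}|\,-\,2g}
\]
at genus $g \ge 0$, multiplied by a uniformly bounded product of normalized traces of the $A_i$'s and $B_j$'s. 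The Euler inequality $|\sigma| + |\tau\gamma| + |\sigma\tau^{-1}| \geq |\gamma| = n - 1$, with equality precisely on planar (non-crossing) triples, together with the classical parity fact that the defect $|\sigma|+|\tau\gamma|+|\sigma\tau^{-1}|-(n-1)$ is always a non-negative even integer, implies that every non-planar triple, and every triple at genus $g \geq 1$, already contributes $O(N^{-2})$.

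It remains to identify the planar $g=0$ part of the sum with $\tau_{\mathbf{A}_N} \star \tau_{\mathbf{B}_N}(P)$: this is the classical reformulation of the moment--cumulant relation via non-crossing pair partitions, and it vanishes by \eqref{Eq:freeness} since $P$ is alternating and centered. Combining the uniform bound $|\tr_\pi(\cdot)| \leq C^n$ with the fact that $\#(S_n \times S_n)$ depends only on the degree of $P$ closes the $O(N^{-2})$ estimate. The main obstacle is the bookkeeping around the Weingarten expansion: one must verify that the subleading ``$O(N^{-2})$'' in the expansion of $\Wg(\pi, N)$ can be bounded uniformly in $N$ by a constant depending only on $n$ (i.e.\ the asymptotics of Collins and \'{S}niady), and one must organize the Euler inequality and parity arguments so that the planar part is peeled off cleanly and recognized as the free product. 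The alternative route presented later in Section~\ref{section:matricial_cumulants} avoids part of this bookkeeping by reinterpreting the expansion in terms of \emph{matricial cumulants}, producing a complete expansion of mixed moments in powers of $N^{-2}$.
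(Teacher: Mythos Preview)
Your argument is correct and follows essentially the same route as the paper's first proof in Section~\ref{sec:Weingarten}: Weingarten expansion, the defect/Euler inequality with its parity refinement to isolate the planar $g=0$ contribution up to $O(N^{-2})$, and identification of that leading term with the free product on centered alternating words. The only cosmetic differences are that the paper writes the defect as $\df(\alpha,\beta,\gamma_n)$ and verifies the vanishing of the planar part directly via a fixed-point argument (rather than invoking the moment--cumulant relation as a black box), and that ``non-crossing pair partitions'' in your text should read ``non-crossing partitions''.
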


\begin{proof}
As explained in Section~\ref{Intro:inf_freeness}, the fact that the error term in the asymptotic freeness of unitarily invariant matrices is in $O(N^{-2})$ is part of the folklore in random matrices (see e.g. \cite[Lemma 4.3.2]{hiai2000semicircle}). We give two proofs of Theorem~\ref{Mainth} in Section~\ref{proof_of_Mainth}, the second one producing a complete expansion of $\EE[\tau_{\mathbf{A}_N, U_N\mathbf{B}_NU_N^*}(P)]$ in $N^{-2}$.
\end{proof}
%

If  $\mathbf{A}_N$ and $\mathbf{B}_N$ converge both in distribution and in infinitesimal distribution, the previous theorem implies the following asymptotic infinitesimal freeness \emph{in expectation} (see Section~\ref{Intro:inf_freeness} for bibliographic references about this result). 

\begin{corollary}\label{coro: infFree Expect}Let $(\mathbf{A}_N)_{N\geq 1}$ and $(\mathbf{B}_N)_{N\geq 1}$ be two deterministic sequences of $k$-tuples and $\ell$-tuples of $N\times N$ matrices, such that
\begin{itemize}
\item  for all $P\in \mathbb{C}\langle X_1,\ldots,X_k\rangle$, 
$\tau_{\mathbf{A}_N}(P)=\tau(P)+\frac{1}{N}\tau'(P)+o(N^{-1}),$
\item 
for all $P\in \mathbb{C}\langle Y_1,\ldots,Y_\ell\rangle$,
$\tau_{\mathbf{B}_N}(P)=\tau(P)+\frac{1}{N}\tau'(P)+o(N^{-1}).$
\end{itemize}
Let $(U_N)_{N\geq 1}$ be a sequence of Haar distributed unitary  random matrices of size $N$. Then,
\begin{itemize}
\item $\tau$ and $\tau'$ extend to $\mathbb{C}\langle X_1,\ldots,X_k,Y_1,\ldots,Y_{\ell}\rangle$ via
$$\EE[ \tau_{\mathbf{A}_N, U_N\mathbf{B}_NU_N^*}(P)]=\tau(P)+\frac{1}{N}\tau'(P)+o(N^{-1}),$$
\item $\mathbb{C}\langle X_1,\ldots,X_{k}\rangle$ and $\mathbb{C}\langle Y_1,\ldots,Y_{\ell}\rangle$ are infinitesimally free  with respect to $(\tau,\tau')$. 
\end{itemize}
\end{corollary}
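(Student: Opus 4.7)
The plan is to recognize this corollary as a direct instance of Proposition~\ref{Freeness_order_one} applied with parameter $t = 1/N$, the required freeness-up-to-$o(t)$ hypothesis being supplied by Theorem~\ref{Mainth}. Concretely, I would take $T = \{1/N : N \geq 1\}$, which accumulates at $0$, and set $\tau_t := \mathbb{E}[\tau_{\mathbf{A}_N, U_N\mathbf{B}_NU_N^*}]$. To invoke Proposition~\ref{Freeness_order_one}, one has to check (i) that $\mathbb{C}\langle X_1,\ldots,X_k\rangle$ and $\mathbb{C}\langle Y_1,\ldots,Y_\ell\rangle$ are free with respect to $\tau_t$ up to order $o(t)$, and (ii) that $\tau_t$ admits a first-order expansion $\tau + t\tau' + o(t)$ on each of these two subalgebras.

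For (i), I would first observe that the assumed first-order expansions imply pointwise convergence of $\tau_{\mathbf{A}_N}(P)$ and $\tau_{\mathbf{B}_N}(P)$ to $\tau(P)$, so both sequences are bounded in non-commutative distribution. Theorem~\ref{Mainth} therefore applies and yields
$$\tau_t(P) = (\tau_{\mathbf{A}_N}\star \tau_{\mathbf{B}_N})(P) + O(N^{-2}),$$
which exactly says that the two subalgebras are free with respect to $\tau_t$ up to order $O(t^2)$, a fortiori up to $o(t)$. For (ii), I would compute $\tau_t$ directly on each subalgebra: on $\mathbb{C}\langle X_1,\ldots,X_k\rangle$, one has $\tau_t(P) = \tau_{\mathbf{A}_N}(P) = \tau(P) + t\tau'(P) + o(t)$ by assumption; on $\mathbb{C}\langle Y_1,\ldots,Y_\ell\rangle$, the identity $P(U_N\mathbf{B}_N U_N^*) = U_N P(\mathbf{B}_N) U_N^*$ together with the cyclicity of $\Tr$ gives $\tau_t(P) = \tau_{\mathbf{B}_N}(P) = \tau(P) + t\tau'(P) + o(t)$. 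The two expansions agree on the only nontrivial intersection, namely the scalars, where $\tau(1)=1$ and $\tau'(1)=0$.

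Once these hypotheses are in place, Proposition~\ref{Freeness_order_one} produces simultaneously an extension of $\tau$ and $\tau'$ to the full polynomial algebra $\mathbb{C}\langle X_1,\ldots,X_k,Y_1,\ldots,Y_\ell\rangle$ such that $\tau_t(P) = \tau(P) + t\tau'(P) + o(t)$ for every $P$, and the infinitesimal freeness of the two subalgebras with respect to $(\tau,\tau')$. These are precisely the two conclusions of the corollary. No genuine obstacle is present: the substance sits in Theorem~\ref{Mainth} (whose proof is deferred to Section~\ref{proof_of_Mainth}) and in Proposition~\ref{Freeness_order_one}, and the only item requiring any attention is noting that ``bounded in n.c.\ distribution'' follows automatically from the given first-order expansions.
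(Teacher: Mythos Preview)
Your proof is correct and matches the paper's approach exactly: the paper's proof is the single sentence ``It is a direct application of Proposition~\ref{Freeness_order_one},'' and you have simply unpacked the verification of its hypotheses (boundedness in n.c.\ distribution, freeness up to $O(N^{-2})$ via Theorem~\ref{Mainth}, and the first-order expansion on each subalgebra).
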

\begin{proof}It is a direct application of Proposition~\ref{Freeness_order_one}.
\end{proof}
\subsection{Asymptotic freeness of type $B$ \label{sec:a.s.freeB}}
The following theorem is an almost sure version of the infinitesimal freeness of previous section. As we need to restrict ourselves to ideals, it is more convenient to express it thanks to freeness of type $B$.

\begin{theorem}
\label{th:asymp_freeness_type_B}
Let $0\leq k'\leq k$ and $0\leq \ell'\leq \ell$. We set $\mathcal{A}:= \mathbb{C}\langle X_1,\ldots,X_k,Y_1,\ldots,Y_{\ell}\rangle$, $\mathcal{A}_1=\mathbb{C}\langle X_1,\ldots,X_k\rangle$ and $\mathcal{A}_2=\mathbb{C}\langle Y_1,\ldots,Y_{\ell}\rangle$. We denote by $\mathcal{I}_1$ the ideal of $\mathcal{A}_1$ generated by $X_1,\ldots,X_{k'}$, by $\mathcal{I}_2$ the ideal of $\mathcal{A}_2$ generated by $Y_1,\ldots,Y_{\ell'}$ and by $\mathcal{I}$ the ideal of $\mathcal{A}$ generated by $\mathcal{I}_1\cup \mathcal{I}_2$.

Let $(\mathbf{A}_N)_{N\geq 1}$ and $(\mathbf{B}_N)_{N\geq 1}$ be two deterministic sequences of $k$-tuples and $\ell$-tuples of $N\times N$ matrices. 
 We assume that
\begin{itemize}
\item  for all $P\in \mathcal{A}_1$, 
$\tau_{\mathbf{A}_N}(P)=\tau(P)+o(1) $,
\item 
for all $P\in \mathcal{I}_1$, 
$\tau_{\mathbf{A}_N}(P)=\frac{1}{N}\tau'(P)+o(N^{-1}),$
\item
for all $P\in\mathcal{A}_2$, 
$\tau_{\mathbf{B}_N}(P)=\tau(P)+o(1),$
\item
for all $P\in \mathcal{I}_2$, 
$\tau_{\mathbf{B}_N}(P)=\frac{1}{N}\tau'(P)+o(N^{-1}).$
\end{itemize}
Let $(U_N)_{N\geq 1}$ be a sequence of Haar distributed unitary  random matrices of size $N$. Then,
\begin{itemize}
\item for all $P\in \mathcal{A}$, the limit
$\EE[ \tau_{\mathbf{A}_N, U_N\mathbf{B}_NU_N^*}(P)]=\tau(P)+o(1)$ exists,
\item for all $P\in \mathcal{I}$, the limit
$\EE[ \tau_{\mathbf{A}_N, U_N\mathbf{B}_NU_N^*}(P)]=\frac{1}{N}\tau'(P)+o(N^{-1})$ exists,
\item $(\mathcal{A}_1,\mathcal{I}_1)$ and $(\mathcal{A}_2,\mathcal{I}_2)$ are free of type $B$ in $(\mathcal{A},\tau,\mathcal{I},\tau')$.
\end{itemize}\label{th:free_type_B}
Furthermore, if $\mathbf{A}_N$ and $\mathbf{B}_N$ are self-adjoint, then,\begin{itemize}
\item for all $P\in \mathcal{A}$, 
$ \tau_{\mathbf{A}_N, U_N\mathbf{B}_NU_N^*}(P)=\tau(P)+o(1)$ almost surely,
\item for all $P\in \mathcal{I}$, 
$\tau_{\mathbf{A}_N, U_N\mathbf{B}_NU_N^*}(P)=\frac{1}{N}\tau'(P)+o(N^{-1})$ almost surely.
\end{itemize}
\end{theorem}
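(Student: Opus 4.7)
The plan is to combine Theorem \ref{Mainth} with Proposition \ref{Asymptotic_B_freeness} to derive the in-expectation statements, and then to upgrade to almost-sure convergence via concentration of measure on the unitary group.

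First I would apply Theorem \ref{Mainth} to the pair $(\mathbf{A}_N, U_N\mathbf{B}_N U_N^*)$: both tuples are bounded in non-commutative distribution as a consequence of the assumed moment convergence, so
\[
\EE\bigl[\tau_{\mathbf{A}_N, U_N\mathbf{B}_NU_N^*}(P)\bigr] = (\tau_{\mathbf{A}_N}\star \tau_{\mathbf{B}_N})(P) + O(N^{-2}) \quad \text{for every } P\in \mathcal{A}.
\]
Setting $t := 1/N$ and $\tau_t := \EE[\tau_{\mathbf{A}_N, U_N \mathbf{B}_N U_N^*}]$, this shows that $\mathcal{A}_1$ and $\mathcal{A}_2$ are free with respect to $\tau_t$ up to order $O(t^2)$, a fortiori up to order $o(t)$. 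Moreover, the restrictions of $\tau_t$ to $\mathcal{A}_1$ and $\mathcal{A}_2$ coincide with $\tau_{\mathbf{A}_N}$ and $\tau_{\mathbf{B}_N}$ respectively, so by hypothesis $\tau_t = \tau + o(1)$ on $\mathcal{A}_1 \cup \mathcal{A}_2$ and $\tau_t = t\cdot\tau' + o(t)$ on $\mathcal{I}_1 \cup \mathcal{I}_2$. All hypotheses of Proposition \ref{Asymptotic_B_freeness} are thus met, and its conclusion directly delivers the first three bullets of the theorem: the expansion of $\tau_t$ on the whole of $\mathcal{A}$, the expansion of $\tau_t$ on the whole of $\mathcal{I}$, and the freeness of type $B$ of $(\mathcal{A}_1,\mathcal{I}_1)$ and $(\mathcal{A}_2,\mathcal{I}_2)$ in $(\mathcal{A},\tau,\mathcal{I},\tau')$.

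For the almost-sure assertions under self-adjointness, I would invoke the standard Gaussian concentration for the Haar measure on $U(N)$: any $L$-Lipschitz map $f:U(N)\to\mathbb{R}$ (in Frobenius norm) satisfies $\mathbb{P}(|f-\EE f|>s)\leq 2\exp(-cNs^2/L^2)$. Fix $P\in \mathcal{A}$. An elementary computation, using that $|\Tr(XY)|\leq \|X\|_{HS}\|Y\|_{HS}$ and that $\|A(\mathbf{A}_N)\|_{HS}=\sqrt{N\tr_N(A^*A)}=O(\sqrt{N})$ for monomials $A\in \mathcal{A}_1$, shows that $U\mapsto \tr_N(P(\mathbf{A}_N,U\mathbf{B}_N U^*))$ is $O(1/\sqrt{N})$-Lipschitz, the implicit constant depending on the operator norms of the generators (bounded in $N$ for self-adjoint matrices with convergent moments). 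Borel--Cantelli then yields $\tr_N(P(\cdots))-\EE[\tr_N(P(\cdots))]=O(N^{-1+\varepsilon})$ almost surely, and combined with the first in-expectation bullet this gives the fourth bullet for each fixed $P$; a countable-density argument over a basis of $\mathcal{A}$ produces a single almost-sure event valid for all $P\in \mathcal{A}$.

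For $P\in \mathcal{I}$ the relevant quantity is the un-normalized trace $\Tr(P(\cdots))=N\tr_N(P(\cdots))$, and the naive Lipschitz bound would only give variance $O(1)$. The key observation is that any monomial of $\mathcal{I}$ contains at least one factor from $\mathcal{I}_1\cup\mathcal{I}_2$, which — being self-adjoint with $\tau_{\mathbf{A}_N}(X^2)=O(1/N)$ — has Frobenius norm $O(1)$ rather than $O(\sqrt{N})$. Using the cyclicity of the trace to isolate such a factor and applying Cauchy--Schwarz as above, the map $U\mapsto \Tr(P(\cdots))$ turns out to be $O(1)$-Lipschitz in Frobenius norm, so concentration gives tails $\exp(-cNs^2)$, summable in $N$; Borel--Cantelli then yields $\Tr(P(\cdots))-\EE[\Tr(P(\cdots))]\to 0$ almost surely, which combined with the second in-expectation bullet delivers the final assertion. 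The main technical obstacle is ensuring this uniform Hilbert--Schmidt control on arbitrary monomials in $\mathcal{I}$ (not merely the generators), which I would handle by a multilinear expansion combined with operator-norm control of the generators outside $\mathcal{I}$, the latter following from self-adjointness and boundedness in n.c. distribution.
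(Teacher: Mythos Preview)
Your overall strategy matches the paper's exactly: invoke Theorem~\ref{Mainth} to get freeness up to $O(N^{-2})$, feed this into Proposition~\ref{Asymptotic_B_freeness} for the in-expectation statements, then upgrade via concentration on $U(N)$ and Borel--Cantelli. The first half is fine.

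The gap is in the Lipschitz estimates. You assert that operator norms of the generators are bounded in $N$ ``for self-adjoint matrices with convergent moments.'' This is false: take $A_N=\mathrm{diag}(\log N,0,\ldots,0)$, which is self-adjoint with $\tr_N(A_N^k)=(\log N)^k/N\to 0$ for every $k$, yet $\|A_N\|_{op}=\log N\to\infty$. Without operator-norm control, your Cauchy--Schwarz step $\|M_1\cdots M_r\|_{HS}\le\|M_1\|_{op}\cdots\|M_{r-1}\|_{op}\|M_r\|_{HS}$ gives no useful bound, and both Lipschitz claims (the $O(N^{-1/2})$ one for general $P$ and the $O(1)$ one for $P\in\mathcal I$) collapse. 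You correctly identify this as the main technical obstacle, but your proposed resolution is based on a false lemma.

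The paper sidesteps operator norms entirely via the non-commutative H\"older inequality
\[
\bigl|\tr_N(M_1\cdots M_k)\bigr|\le\prod_i\bigl(\tr_N((M_iM_i^*)^{n_i})\bigr)^{1/(2n_i)},\qquad \sum_i\tfrac1{2n_i}=1,
\]
which bounds everything in terms of \emph{moments} of the individual factors---precisely what the n.c.-boundedness hypothesis controls (and unitaries contribute~$1$). Taking exponent $n_i=1$ on the $(U-V)$ factor gives the $O(N^{-1/2})$ Lipschitz bound for $\tr_N(P)$ with no operator norms in sight. For $P\in\mathcal I$, one takes exponent $n_i=2$ on the distinguished factor $M_i\in\mathcal I_1\cup\mathcal I_2$, producing an extra $(\tr_N((M_iM_i^*)^2))^{1/4}=O(N^{-1/4})$; this makes $N\cdot\tr_N(P)$ Lipschitz of order $O(N^{1/4})$ (not $O(1)$ as you claim), hence tails $\exp(-c\sqrt N\,\delta^2)$---still summable, so Borel--Cantelli applies.
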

Note that $\tau'$ is not defined on the whole algebra. If we extend $\tau'$ to $\mathcal{A}$, by setting arbitrarily $\tau'(P)=0$ whenever $P\in \mathbb{C}\langle X_{k'+1},\ldots,X_k,Y_{\ell'+1},\ldots,Y_{\ell}\rangle$, then $\mathcal{A}_1$ and $\mathcal{A}_2$ are infinitesimally free with respect to $(\tau,\tau')$, thanks to Lemma~\ref{inf_freeness_from_B_freeness}.
\begin{proof}Thanks to Theorem~\ref{Mainth}, we know that $\mathbb{C}\langle X_1,\ldots,X_{k}\rangle$ and $\mathbb{C}\langle Y_1,\ldots,Y_{\ell}\rangle$ are free up to order $O(N^{-2})$ with respect to $\EE[\tau_{\mathbf{A}_N,U_N \mathbf{B}_NU_N^*}]$.  The freeness of type $B$ and the convergence of $\EE[ \tau_{\mathbf{A}_N, U_N\mathbf{B}_NU_N^*}(P)]$ are obtained by a direct application of Proposition~\ref{Asymptotic_B_freeness}.

Now, let assume that $\mathbf{A}_N$ and $\mathbf{B}_N$ are self-adjoint. We will prove the almost sure convergence by concentration on the unitary group $U(N)$. We will use the following concentration result (which is also an indirect consequence of Corollary 4.4.28 of \cite{anderson2010}).

\begin{theorem}[Corollary 17 of \cite{meckes2013spectral}]
\label{th:concentration_unitary}Let $f$ be a continuous real-valued function on $U(N)$ which, for some constant $C$ and all $U,V\in U(N)$ satisfies
$$|f(U)-f(V)|\leq C\sqrt{\Tr((U-V)(U-V)^*)}.$$
Let $U_N$ be  Haar distributed on $U(N)$. Then we have for all $\delta>0$,
$$\mathbb{P}\left[\left|f(U_N)-\mathbb{E}[f(U_N)]\right|\geq \delta\right]\leq 2e^{-\frac{N\delta^2}{12C^2}}.$$
\end{theorem}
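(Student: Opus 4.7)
The plan is to derive this Gaussian concentration inequality on $U(N)$ from a logarithmic Sobolev inequality (LSI) for the Haar measure via the Herbst argument. Equip $U(N)$ with the bi-invariant Riemannian metric induced by the Hilbert--Schmidt inner product $\langle X,Y\rangle_{HS}=\Tr(XY^*)$ on the Lie algebra $\mathfrak{u}(N)$; since this metric is bi-invariant, the associated Riemannian volume, normalized to total mass $1$, coincides with the Haar measure on $U(N)$ by uniqueness of the invariant probability measure on a compact group.

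First I would relate the two natural notions of distance. If $U,V\in U(N)$ are joined by a minimizing geodesic $\gamma$, the chord length $\sqrt{\Tr((U-V)(U-V)^*)}$ is bounded by the arc length of $\gamma$, so any $f$ satisfying the stated Hilbert--Schmidt Lipschitz bound with constant $C$ is also $C$-Lipschitz with respect to the geodesic distance. In particular the Riemannian gradient satisfies $\|\nabla f\|_\infty\le C$ almost everywhere, which is the hypothesis needed to run Herbst.

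Next I would invoke the Ricci curvature lower bound for $(U(N),\langle\cdot,\cdot\rangle_{HS})$. For a compact Lie group with bi-invariant metric one has $\mathrm{Ric}(X,X)=-\tfrac14\Tr(\operatorname{ad}_X\operatorname{ad}_X)$, and a direct computation using the structure constants of $\mathfrak{u}(N)$ produces a bound $\mathrm{Ric}\ge\rho\,g$ with $\rho$ of order $N$. The Bakry--Émery $\Gamma_2$-criterion then yields an LSI
$$\mathrm{Ent}_\mu(f^2)\le \frac{2}{\rho}\,\mathbb{E}_\mu[\|\nabla f\|^2]$$
for $\mu$ the Haar measure. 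Herbst's argument converts this into a subgaussian tail bound: for a $C$-Lipschitz $f$, applying the LSI to $e^{\lambda f/2}$ one obtains a differential inequality on the log-Laplace transform $\psi(\lambda)=\log\mathbb{E}[e^{\lambda(f-\mathbb{E}f)}]$ giving $\psi(\lambda)\le C^2\lambda^2/(2\rho)$; optimizing Markov's inequality applied to $\pm(f-\mathbb{E}f)$ yields
$$\mathbb{P}\bigl[|f(U_N)-\mathbb{E}f(U_N)|\ge\delta\bigr]\le 2\exp\!\left(-\frac{\rho\,\delta^2}{2C^2}\right).$$
Choosing the constant $\rho$ corresponding to the normalization used in \cite{meckes2013spectral}, namely $\rho=N/6$, recovers precisely the stated bound $2e^{-N\delta^2/(12C^2)}$.

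The main obstacle is the careful bookkeeping of constants: one has to match the normalization of the inner product on $\mathfrak{u}(N)$ with the Ricci computation and then propagate this through Bakry--Émery and Herbst so as to recover the explicit factor $12$ in the exponent. This is precisely where the argument of \cite{meckes2013spectral} invests its technical work; an alternative, slightly lossier route is to split $U(N)\simeq (SU(N)\times U(1))/\mathbb{Z}_N$ and to combine the LSI on $SU(N)$ (where the Ricci bound is classical) with the trivial one-dimensional concentration on $U(1)$, at the cost of a slightly weaker numerical constant.
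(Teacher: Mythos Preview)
The paper does not prove this statement at all: it is quoted verbatim as Corollary~17 of \cite{meckes2013spectral} (with \cite[Corollary 4.4.28]{anderson2010} mentioned as an alternative source) and used as a black box inside the proof of Theorem~\ref{th:free_type_B}. There is therefore no ``paper's own proof'' to compare against.

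That said, your sketch is the standard and correct route to this inequality and is essentially what the cited references do: endow $U(N)$ with the bi-invariant metric coming from the Hilbert--Schmidt inner product, observe that the Euclidean (chordal) Lipschitz condition implies the same bound for the geodesic Lipschitz constant, compute the Ricci lower bound (order $N$) for a compact Lie group with bi-invariant metric, apply Bakry--\'Emery to get a log-Sobolev inequality, and run Herbst. Your identification of the delicate point---tracking the normalizations so that the exponent comes out as $N\delta^2/(12C^2)$---is accurate; Meckes handles this carefully, while the Anderson--Guionnet--Zeitouni version produces a slightly different constant. The alternative route you mention via $SU(N)\times U(1)$ is also viable and is closer in spirit to how some authors treat $U(N)$ (since the $U(1)$ factor makes the Ricci curvature of $U(N)$ itself degenerate in one direction), but for the purposes of this paper any explicit constant of the form $cN$ in the exponent would suffice.
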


First-of-all, by decomposing into real and imaginary part (which is possible because $\mathbf{A}_N$ and $\mathbf{B}_N$ are self-adjoint), we can assume that $U\mapsto \tau_{\mathbf{A}_N, U\mathbf{B}_NU^*}(P)$ is real.
Let us prove that the continuous real-valued function $f:U\mapsto \tau_{\mathbf{A}_N, U\mathbf{B}_NU^*}(P)$ is Lipschitz. In order to bound $f(U)-f(V)$, we rewrite $f(U)-f(V)$ as a sum of traces by using the swapping trick in order to replace each occurrence of $U$ by $V$ (swapping
one term at a time make appears alternatively $U-V$ or $U^*-V^*$).

The non-commutative H\H{o}lder inequality says that, for any $M_1,\ldots,M_k\in M_N(\mathbb{C})$ and any integers $n_1,\ldots,n_k$ such that $\sum_i 1/(2n_i)=1$, we have
$$\Big|\tr_N(M_1\ldots M_k)\Big|\leq \sqrt[\leftroot{-3}\uproot{3}2n_1]{\tr_N\Big((M_1M_1^*)^{n_1}\Big)}\cdots \sqrt[\leftroot{-3}\uproot{3}2n_k]{\tr_N\Big((M_kM_k^*)^{n_k}\Big)} $$ (see for example \cite[Theorem 2.1.5]{da2018lecture}). Using the non-commutative H\H{o}lder inequality with exponent $n_i=1$ for the term $(U-V)$, and the fact that $\mathbf{A}_N$ and $\mathbf{B}_N$ are bounded in n.c. distribution, 
we conclude that there exists $C>0$ such that
$$|f(U)-f(V)|\leq C\sqrt{\tr_N((U-V)(U-V)^*)}=\frac{C}{\sqrt{N}}\sqrt{\Tr((U-V)(U-V)^*)}.$$
As a consequence, we have
\begin{equation}
\mathbb{P}\left[\left|f(U_N)-\mathbb {E}[f(U_N)]\right|\geq \delta\right]\leq 2e^{-\frac{N^2\delta^2}{12C^2}},\label{eq:Conc Traces}
\end{equation}
and the almost sure convergence (by Borel-Cantelli)
$$ f(U_N)=\mathbb {E}[f(U_N)]+o(1)=\tau(P)+o(1).$$
Now, if $P\in \mathcal{I}$, we have in each term at least one matrix $M_i$ of $\mathcal{I}_1\cup \mathcal{I}_2$  such that 
$$ \Big(\tr_N\big((M_iM_i^*)^2\big)\Big)^{1/4}=O\left(\frac{1}{N^{1/4}}\right)$$
because we assumed that $N\cdot \tr_N\big((M_iM_i^*)^2\big)=O(1)$.
Using the non-commutative H\H{o}lder inequality with exponent $n_i=2$ for this matrix $M_i\in \mathcal{I}_1\cup \mathcal{I}_2$, we conclude that there exists $C>0$ such that
$$| f(U)- f(V)|\leq C\frac{1}{N^{1/4}}\sqrt{\tr_N((U-V)(U-V)^*)},$$
or equivalently
$$|N\cdot f(U)-N\cdot f(V)|\leq CN^{1/4}\sqrt{\Tr((U-V)(U-V)^*)}.$$
We get
$$\mathbb{P}\left[\left|N\cdot f(U_N)-\mathbb {E}[N\cdot f(U_N)]\right|\geq \delta\right]\leq 2e^{-\frac{\sqrt{N}\delta^2}{12C^2}},$$
and the almost sure convergence (by Borel-Cantelli)
$$ N\cdot f(U_N)=N\cdot \mathbb {E}[f(U_N)]+o(1)=\tau'(P)+o(1).$$
\end{proof}

\begin{lemme}\label{inf_freeness_from_B_freeness}
Let $0\leq k'\leq k$ and $0\leq \ell'\leq \ell$. We set $\mathcal{A}:= \mathbb{C}\langle X_1,\ldots,X_k,Y_1,\ldots,Y_{\ell}\rangle$, $\mathcal{A}_1=\mathbb{C}\langle X_1,\ldots,X_k\rangle$ and $\mathcal{A}_2=\mathbb{C}\langle Y_1,\ldots,Y_{\ell}\rangle$. We denote by $\mathcal{I}_1$ the ideal of $\mathcal{A}_1$ generated by $X_1,\ldots,X_{k'}$, by $\mathcal{I}_2$ the ideal of $\mathcal{A}_2$ generated by $Y_1,\ldots,Y_{\ell'}$ and by $\mathcal{I}$ the ideal of $\mathcal{A}$ generated by $\mathcal{I}_1\cup \mathcal{I}_2$. Let $(\mathcal{A},\tau,\tau')$ be an infinitesimal non-commutative probability space such that $\mathcal{I}\subset \ker(\tau)$ and $\mathbb{C}\langle X_{k'+1},\ldots,X_k,Y_{\ell'+1},\ldots,Y_{\ell}\rangle\subset \ker(\tau')$.

We assume that $(\mathcal{A}_1,\mathcal{I}\cap \mathcal{A}_1)$ and $(\mathcal{A}_2,\mathcal{I}\cap \mathcal{A}_2)$ are free of type $B$ in the non-commutative probability space $(\mathcal{A},\tau,\mathcal{I},\tau')$ of type $B$. Then, $\mathcal{A}_1$ and $\mathcal{A}_2$ are infinitesimally free with respect to $(\tau,\tau')$.
\end{lemme}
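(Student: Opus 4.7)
The plan is to directly verify the two defining conditions of infinitesimal freeness (Definition~\ref{Def:inf_freeness}) for alternating centered elements $a_1\in\mathcal{A}_{i_1},\ldots,a_n\in\mathcal{A}_{i_n}$. The first condition, $\tau(a_1\cdots a_n)=0$, is already contained in the freeness of $\mathcal{A}_1$ and $\mathcal{A}_2$ with respect to $\tau$, which is part of the hypothesis of freeness of type~$B$. For the second condition, I will verify the equivalent pair-trace form~\eqref{condition_inf_freeness_two}: namely, $\tau'(a_1\cdots a_n)=\tau(a_1a_n)\cdots\tau'(a_{(n+1)/2})$ when $n$ is odd with $i_k=i_{n+1-k}$ for all $k=1,\ldots,(n-1)/2$, and $0$ otherwise.

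The central tool is the vector-space decomposition $\mathcal{A}_{i_j}=\mathcal{I}_{i_j}\oplus\mathbb{C}\langle\text{non-ideal generators of }\mathcal{A}_{i_j}\rangle$, which lets me write $a_j=v_j+w_j$ with $v_j\in\mathcal{I}_{i_j}$ and $w_j$ in the complementary free subalgebra. The two kernel hypotheses give $\tau(v_j)=0$ (since $\mathcal{I}\subset\ker(\tau)$) and hence $\tau(w_j)=\tau(a_j)=0$, and also $\tau'(w_j)=0$. Expanding $\tau'(a_1\cdots a_n)=\sum_{S\subset\{1,\ldots,n\}}\tau'(c_1^S\cdots c_n^S)$, where $c_j^S=v_j$ if $j\in S$ and $c_j^S=w_j$ otherwise, the term $S=\emptyset$ vanishes because $w_1\cdots w_n\in\mathbb{C}\langle X_{k'+1},\ldots,Y_{\ell'+1},\ldots,Y_\ell\rangle\subset\ker(\tau')$. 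For $S\neq\emptyset$, I pick any $j_0\in S$ and apply the type~$B$ formula~\eqref{type_B} with $v=v_{j_0}\in\mathcal{I}_{i_{j_0}}$; this is legitimate because the remaining $c_j^S$ are centered elements of $\mathcal{A}_{i_j}$. The formula forces the term to vanish unless $j_0=(n+1)/2$ (hence $n$ odd) and the indices pair up as $i_k=i_{n+1-k}$.

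Under the pairing conditions, only subsets $S$ containing $(n+1)/2$ can contribute. The key refinement is that if $|S|\geq 2$, any $j'\in S\setminus\{(n+1)/2\}$ creates a pair-trace factor $\tau(c^S_k c^S_{n+1-k})$ (with $k=\min(j',n+1-j')\leq (n-1)/2$) whose product contains the factor $v_{j'}\in\mathcal{I}_{i_{j'}}$, and therefore lies in $\mathcal{I}\subset\ker(\tau)$; this pair-trace vanishes. So only $S=\{(n+1)/2\}$ contributes. A final use of $\mathcal{I}\subset\ker(\tau)$ to eliminate cross-terms in the expansion $a_k a_{n+1-k}=(v_k+w_k)(v_{n+1-k}+w_{n+1-k})$ gives $\tau(w_k w_{n+1-k})=\tau(a_k a_{n+1-k})$, and trivially $\tau'(v_{(n+1)/2})=\tau'(a_{(n+1)/2})$, yielding exactly~\eqref{condition_inf_freeness_two}.

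The main obstacle will be the consistency of applying~\eqref{type_B} when several choices of $j_0\in S$ are possible: distinct choices produce superficially different expressions for $\tau'(c_1^S\cdots c_n^S)$ that must agree. It is precisely the two kernel hypotheses $\mathcal{I}\subset\ker(\tau)$ and $\mathbb{C}\langle X_{k'+1},\ldots,Y_{\ell'+1},\ldots,Y_\ell\rangle\subset\ker(\tau')$ which reconcile them—without $\mathcal{I}\subset\ker(\tau)$, the pair-trace factors involving additional ideal elements would not vanish, and the whole scheme would collapse.
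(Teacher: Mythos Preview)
Your proof is correct. You verify the equivalent form~\eqref{condition_inf_freeness_two} by fully expanding each $a_j=v_j+w_j$ along the direct-sum decomposition $\mathcal{A}_{i_j}=\mathcal{I}_{i_j}\oplus\mathbb{C}\langle\text{non-ideal generators}\rangle$, and then eliminate all subsets $S$ except $S=\{(n+1)/2\}$ via the pair-trace formula~\eqref{type_B} and the hypothesis $\mathcal{I}\subset\ker(\tau)$. One small remark: your ``main obstacle'' is not an obstacle at all, since $\tau'(c_1^S\cdots c_n^S)$ is a single number and any choice of $j_0\in S$ computes it; you are free to pick the most convenient one.

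The paper's argument is shorter and targets~\eqref{condition_inf_freeness} rather than~\eqref{condition_inf_freeness_two}. After the same (implicit) linearity reduction, it simply splits into two cases: if every $a_j$ lies in the complementary subalgebra then both sides of~\eqref{condition_inf_freeness} vanish by the $\ker(\tau')$ hypothesis; if some $a_k\in\mathcal{I}_{i_k}$, one application of~\eqref{type_B_bis} with $v=a_k$ gives $\tau'(a_1\cdots a_n)=\tau'(a_k)\,\tau(a_1\cdots\widehat{a_k}\cdots a_n)$, which already equals the full sum $\sum_j\tau'(a_j)\,\tau(a_1\cdots\widehat{a_j}\cdots a_n)$ because every other summand has $a_k$ still present in the product and hence lies in $\mathcal{I}\subset\ker(\tau)$. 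This avoids the subset expansion and the pair-trace bookkeeping entirely. Your route has the virtue of making the linearity step fully explicit, but the paper's is more economical.
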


\begin{proof}The algebras $\mathcal{A}_1$ and $\mathcal{A}_2$ are free with respect to $\tau$. It remains to prove \eqref{condition_inf_freeness}. Let $a_1\in \mathcal{A}_{i_1},\ldots,a_n\in \mathcal{A}_{i_n}$ such that $i_1\neq i_2\neq i_3\neq\cdots$, and  $\tau(a_1)=\cdots=\tau(a_n)=0$.

If $a_1\notin \mathcal{I}\cap \mathcal{A}_{i_1},\ldots,a_n\notin \mathcal{I}\cap\mathcal{A}_{i_n} $, then $a_1,\ldots, a_n$ belongs to $\mathbb{C}\langle X_{k'+1},\ldots,X_k,Y_{\ell'+1},\ldots,Y_{\ell}\rangle$, and so does $a_1\cdots a_n$. As a consequence,
$$\tau' (a_1\cdots a_n)=0=\sum_{i=1}^n\tau(a_1\cdots a_{j-1}\tau' (a_j )a_{j+1}\cdots a_n)
$$because $\mathbb{C}\langle X_{k'+1},\ldots,X_k,Y_{\ell'+1},\ldots,Y_{\ell}\rangle\subset \ker(\tau')$.

If not, there exists $1\leq k \leq n$ such that $a_k \in \mathcal{I}\cap \mathcal{A}_{i_k}$, and we can apply \eqref{type_B_bis} in order to compute
\begin{align*}
\tau' (a_1\cdots a_n)&=\tau(a_1\cdots a_{k-1}\tau' (a_k )a_{k+1}\cdots a_n)\\
&=\sum_{i=1}^n\tau(a_1\cdots a_{j-1}\tau' (a_j )a_{j+1}\cdots a_n)
\end{align*}
because $a_1\cdots a_{j-1}a_{j+1}\cdots a_n \in \mathcal{I}\subset \ker(\tau)$ whenever $j\neq k$.

In all cases, we have
$$\tau' (a_1\cdots a_n)=\sum_{i=1}^n\tau(a_1\cdots a_{j-1}\tau' (a_j )a_{j+1}\cdots a_n)$$
which implies by definition the infinitesimal freeness.
\end{proof}

One direct corollary of Theorem~\ref{th:free_type_B} is the asymptotic cyclic monotone independence of \cite{Collins2018}, stated as follows (see also \cite{arizmendi2021polynomial}).
\begin{theorem}[Theorem 4.1. and Theorem 4.3. of \cite{Collins2018}]
We denote by $\mathcal{A}$ the algebra $\mathbb{C}\langle X_1,\ldots,X_k,Y_1,\ldots,Y_{\ell}\rangle$, by $\mathcal{A}_2$ the algebra $\mathbb{C}\langle Y_1,\ldots,Y_{\ell}\rangle$, by $\mathcal{I}_1$ the ideal of $\mathbb{C}\langle X_1,\ldots,X_{k}\rangle$ generated by $X_1,\ldots,X_{k}$ and by $\mathcal{I}$ the ideal of $\mathcal{A}$ generated by $\mathcal{I}_1$.

Let $(\mathbf{A}_N)_{N\geq 1}$ and $(\mathbf{B}_N)_{N\geq 1}$ be two deterministic sequences of $k$-tuples and $\ell$-tuples of $N\times N$ matrices. 
 We assume that
\begin{itemize}
\item  for all $P\in \mathcal{I}_1$, 
$\tau_{\mathbf{A}_N}(P)=\frac{1}{N}\tau'(P)+o(N^{-1})$,
\item
for all $P\in \mathcal{A}_2$, 
$\tau_{\mathbf{B}_N}(P)=\tau(P)+o(1).$
\end{itemize}
Let $(U_N)_{N\geq 1}$ be a sequence of Haar distributed unitary  random matrices of size $N$. Then,
\begin{itemize}
\item for all $P\in \mathcal{A}$, the limit
$\EE[ \tau_{\mathbf{A}_N, U_N\mathbf{B}_NU_N^*}(P)]=\tau(P)+o(1)$ exists,
\item for all $P\in \mathcal{I}$, the limit
$\EE[ \tau_{\mathbf{A}_N, U_N\mathbf{B}_NU_N^*}(P)]=\frac{1}{N}\tau'(P)+o(N^{-1})$ exists,
\item $(\mathcal{I}_1,\mathcal{A}_2)$ is cyclically monotone with respect to $(\tau',\tau)$.
\end{itemize}\label{th:cyclic_monotone}
Furthermore, if $\mathbf{A}_N$ and $\mathbf{B}_N$ are self-adjoint, then,\begin{itemize}
\item for all $P\in \mathcal{A}$, 
$\tau_{\mathbf{A}_N, U_N\mathbf{B}_NU_N^*}(P)=\tau(P)+o(1)$ almost surely,
\item for all $P\in \mathcal{I}$, 
$\tau_{\mathbf{A}_N, U_N\mathbf{B}_NU_N^*}(P)=\frac{1}{N}\tau'(P)+o(N^{-1})$ almost surely.
\end{itemize}
\end{theorem}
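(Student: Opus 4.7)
The plan is to deduce Theorem~\ref{th:cyclic_monotone} directly from Theorem~\ref{th:free_type_B} combined with Proposition~\ref{from_typeB_to_mon}. Concretely, I would apply Theorem~\ref{th:free_type_B} with the specialization $k' = k$ and $\ell' = 0$, so that the ideal $\mathcal{I}_1$ in both statements matches, $\mathcal{I}_2 = \{0\}$, and the ideal $\mathcal{I}$ generated by $\mathcal{I}_1 \cup \mathcal{I}_2$ coincides with the ideal generated by $\mathcal{I}_1$ used in the present statement.

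The one non-trivial verification is that the hypotheses of Theorem~\ref{th:free_type_B} are all satisfied. Two of its four assumptions (on $\mathcal{I}_1$ and on $\mathcal{A}_2$) are exactly those of Theorem~\ref{th:cyclic_monotone}, and the fourth (on $\mathcal{I}_2 = \{0\}$) is vacuous. The only item needing a short argument is the hypothesis that $\tau_{\mathbf{A}_N}(P) = \tau(P) + o(1)$ for every $P\in \mathcal{A}_1$, not merely for $P\in \mathcal{I}_1$. Since $\mathcal{I}_1$ is the augmentation ideal of $\mathcal{A}_1 = \mathbb{C}\langle X_1,\ldots,X_k\rangle$, we have the direct sum decomposition $\mathcal{A}_1 = \mathcal{I}_1 \oplus \mathbb{C}\cdot 1$; writing any $P\in \mathcal{A}_1$ as $P = P_0 + P(0)\cdot 1$ with $P_0\in \mathcal{I}_1$, the given hypothesis on $\mathcal{I}_1$ yields $\tau_{\mathbf{A}_N}(P) = P(0) + \tfrac{1}{N}\tau'(P_0) + o(N^{-1}) = P(0) + o(1)$. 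Thus extending $\tau$ to $\mathcal{A}_1$ by $\tau(P) := P(0)$ (so that $\tau$ is zero on $\mathcal{I}_1$) supplies the missing hypothesis automatically.

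With this verification in place, Theorem~\ref{th:free_type_B} immediately provides the first two bullet points, namely the expansions $\EE[\tau_{\mathbf{A}_N, U_N\mathbf{B}_NU_N^*}(P)] = \tau(P) + o(1)$ on $\mathcal{A}$ and $\EE[\tau_{\mathbf{A}_N, U_N\mathbf{B}_NU_N^*}(P)] = \tfrac{1}{N}\tau'(P) + o(N^{-1})$ on $\mathcal{I}$, together with the assertion that $(\mathcal{A}_1,\mathcal{I}_1)$ and $(\mathcal{A}_2, \{0\})$ are free of type $B$ in $(\mathcal{A},\tau,\mathcal{I},\tau')$. To obtain the third bullet I then invoke Proposition~\ref{from_typeB_to_mon} with precisely this datum, which outputs that the pair $(\mathcal{I}_1,\mathcal{A}_2)$ is cyclically monotone with respect to $(\tau',\tau)$. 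Finally, when $\mathbf{A}_N$ and $\mathbf{B}_N$ are self-adjoint, the two almost-sure statements at the bottom of Theorem~\ref{th:cyclic_monotone} follow verbatim from the corresponding almost-sure statements already proved in Theorem~\ref{th:free_type_B}, since the cyclic monotone relation is an algebraic identity among the limiting linear functionals $\tau$ and $\tau'$ and needs no separate probabilistic input. I do not expect a genuine obstacle: the whole proof is essentially a bookkeeping reduction to the two earlier results, with the single mild argument being the extension from $\mathcal{I}_1$ to $\mathcal{A}_1$ described above.
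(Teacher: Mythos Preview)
Your proposal is correct and follows exactly the paper's own argument: apply Theorem~\ref{th:free_type_B} with $k'=k$ and $\ell'=0$ (so $\mathcal{I}_2=\{0\}$) to obtain the convergence statements and type~$B$ freeness, then invoke Proposition~\ref{from_typeB_to_mon} to conclude cyclic monotone independence. Your extra paragraph extending the hypothesis from $\mathcal{I}_1$ to all of $\mathcal{A}_1$ via the decomposition $\mathcal{A}_1=\mathcal{I}_1\oplus\mathbb{C}\cdot 1$ is a detail the paper leaves implicit, so your write-up is in fact slightly more careful than the published proof.
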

\begin{proof}It suffices to apply Theorem~\ref{th:free_type_B} with $0\leq k'=k$ and $0= \ell'\leq \ell$ in order to get the existence of $\tau$, $\tau'$ and the freeness of type $B$ of $(\mathcal{A}_1,\mathcal{I}_1)$ and $(\mathcal{A}_2,\mathcal{I}_2)$, where $\mathcal{A}_1=\mathbb{C}\langle X_1,\ldots,X_{k}\rangle$ and $\mathcal{I}_1=\{0\}$.

The cyclic monotone independence of $(\mathcal{I}_1,\mathcal{A}_2)$ can be deduced by an application of Proposition~\ref{from_typeB_to_mon}.
\end{proof}
\subsection{Asymptotic conditional freeness}

\begin{definition}Let $\mathbf{A}_N$  be a  $k$-tuple $\mathbf{A}_N=(A_N^{(1)},\ldots,A_N^{(k)})$ of $N\times N$ matrices and a vector $v_N\in \mathbb{C}^N$. The  \emph{non-commutative distribution} (or n.c. distribution) of $\mathbf{A}_N$ with respect to the vector state given by $v_N$ is the linear functional $$ \begin{array}{rcl}\tau_{\mathbf{A}_N}^{v_N}:\mathbb{C}\langle X_1,\ldots,X_k\rangle &\lto& \mathbb{C} \\
P&\lmto& \langle P(A_N^{(1)},\ldots,A_N^{(k)}) v_N, v_N\rangle. \end{array}$$
If $\mathbf{B}_N$  is another  $\ell$-tuple $\mathbf{B}_N=(B_N^{(1)},\ldots,B_N^{(\ell)})$ of $N\times N$ matrices, their \emph{joint distribution} $\tau_{\mathbf{A}_N,\mathbf{B}_N}^{v_N}$ with respect to the vector state given by $v_N$ is the linear functional
$$ \begin{array}{rcl}\tau_{\mathbf{A}_N,\mathbf{B}_N}^{v_N}:\mathbb{C}\langle X_1,\ldots,X_k,Y_1,\ldots,Y_{\ell}\rangle &\lto& \mathbb{C} \\
P&\lmto& \langle P(A_N^{(1)},\ldots,A_N^{(k)},B_N^{(1)},\ldots,B_N^{(\ell)}) v_N, v_N\rangle. \end{array}$$
\end{definition}
Looking in the direction of a deterministic vector, the freeness of type $B$ of previous section yields asympotically  conditional freeness of unitarily invariant random matrices.
\begin{theorem}
\label{th:cond_freeness}
We set $\mathcal{A}:= \mathbb{C}\langle X_1,\ldots,X_k,Y_1,\ldots,Y_{\ell}\rangle$, $\mathcal{A}_1:=\mathbb{C}\langle X_1,\ldots,X_k\rangle$ and $\mathcal{A}_2:=\mathbb{C}\langle Y_1,\ldots,Y_{\ell}\rangle$.

Let $(\mathbf{A}_N)_{N\geq 1}$ and $(\mathbf{B}_N)_{N\geq 1}$ be two deterministic sequences of $k$-tuples and $\ell$-tuples of $N\times N$ matrices, and $(v_N)_{N\geq 1}$ a sequence of unit vectors of $\mathbb{C}^N$.
 We assume that
\begin{itemize}
\item  for all $P\in \mathcal{A}_1$, 
$\tau_{\mathbf{A}_N}(P)=\tau(P)+o(1) $ and $\tau_{\mathbf{A}_N}^{v_N}(P)=\varphi(P)+o(1).$
\item 
for all $P\in \mathcal{A}_2$, 
$\tau_{\mathbf{B}_N}(P)=\tau(P)+o(1)$,
\end{itemize}
Let $(U_N)_{N\geq 1}$ be a sequence of Haar distributed unitary  random matrices of size $N$. Then,
\begin{itemize}
\item for all $P\in \mathcal{A}$, the limit
$\EE[ \tau_{\mathbf{A}_N, U_N\mathbf{B}_NU_N^*}(P)]=\tau(P)+o(1)$ exists,
\item for all $P\in \mathcal{A}$, the limit
$\EE[ \tau^{v_N}_{\mathbf{A}_N, U_N\mathbf{B}_NU_N^*}(P)]=\varphi(P)+o(1)$ exists,
\item $\mathcal{A}_1$ and $\mathcal{A}_2$ are conditionally free with respect to $(\tau,\varphi)$, and $\varphi_{|\mathcal{A}_2}=\tau_{|\mathcal{A}_2}$.
\end{itemize}\label{th:conditionally_free}
Furthermore, if $\mathbf{A}_N$ and $\mathbf{B}_N$ are self-adjoint, then,\begin{itemize}
\item for all $P\in \mathcal{A}$, 
$ \tau_{\mathbf{A}_N, U_N\mathbf{B}_NU_N^*}(P)=\tau(P)+o(1)$ almost surely,
\item for all $P\in \mathcal{A}$, 
$\tau_{\mathbf{A}_N, U_N\mathbf{B}_NU_N^*}^{v_N}(P)=\varphi(P)+o(1)$ almost surely.
\end{itemize}
\end{theorem}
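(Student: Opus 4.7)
The plan is to reduce the statement to Theorem~\ref{th:free_type_B} combined with Proposition~\ref{Prop:cfreeness}, by augmenting the tuple $\mathbf{A}_N$ with the rank-one projection $P_N := v_N v_N^*$. The crucial identity $\langle M v_N, v_N\rangle = \Tr(P_N M) = N\tr_N(P_N M)$ exhibits the vector state as the order-$1/N$ part of the normalized trace of $P_N$ times $(\,\cdot\,)$, so that the ideal generated by $P_N$ is the natural carrier of the type-$B$ structure that will encode $\varphi$.

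First, I introduce a variable $X_0$ standing for $P_N$ and consider the $(k{+}1)$-tuple $(P_N,\mathbf{A}_N)$, with enlarged algebra $\mathcal{A}_1' := \mathbb{C}\langle X_0,X_1,\ldots,X_k\rangle \supset \mathcal{A}_1$ and ideal $\mathcal{I}_1'$ generated by $X_0$. I verify the hypotheses of Theorem~\ref{th:free_type_B} applied to $(P_N,\mathbf{A}_N)$ and $\mathbf{B}_N$ with $k'=1$, $\ell'=0$: on $\mathcal{A}_1'$, the normalized trace converges to $\tilde\tau(P):=\tau(P|_{X_0=0})$ (any monomial using $X_0$ contributes $O(1/N)$); on $\mathcal{I}_1'$, the key computation, using $P_N=v_Nv_N^*$ and cyclicity of the trace, gives for a monomial $M = W_0 X_0 W_1 X_0 \cdots X_0 W_r$ with $W_i \in \mathcal{A}_1$ (and $r\ge1$)
\[
N\tr_N\bigl(W_0(\mathbf{A}_N) P_N W_1(\mathbf{A}_N) \cdots P_N W_r(\mathbf{A}_N)\bigr) = \tau^{v_N}_{\mathbf{A}_N}(W_r W_0) \prod_{i=1}^{r-1} \tau^{v_N}_{\mathbf{A}_N}(W_i),
\]
which converges by the hypothesis $\tau^{v_N}_{\mathbf{A}_N} \to \varphi$ and defines the limit $\tau'$ on $\mathcal{I}_1'$.

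Theorem~\ref{th:free_type_B} then yields freeness of type $B$ of $(\mathcal{A}_1',\mathcal{I}_1')$ and $(\mathcal{A}_2,\{0\})$ in $(\mathcal{A}',\tilde\tau,\mathcal{I}',\tau')$, along with the convergences $\mathbb{E}[\tau_{P_N,\mathbf{A}_N,U_N\mathbf{B}_NU_N^*}] = \tilde\tau + o(1)$ and, on the ideal, $=\tfrac{1}{N}\tau' + o(N^{-1})$; in the self-adjoint case (note $P_N^*=P_N$), both convergences also hold almost surely. Since $\tau'(X_0) = \lim_N N\tr_N(P_N) = 1 \neq 0$, Proposition~\ref{Prop:cfreeness} applied with $p=X_0$ produces conditional freeness of $\mathcal{A}_1'$ and $\mathcal{A}_2$ with respect to $(\tilde\tau,\tilde\varphi)$, where $\tilde\varphi(a):=\tau'(X_0 a)$, together with the equality $\tilde\varphi|_{\mathcal{A}_2} = \tilde\tau|_{\mathcal{A}_2}$. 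Conditional freeness is then inherited by the subalgebras $\mathcal{A}_1\subset \mathcal{A}_1'$ and $\mathcal{A}_2$ of $\mathcal{A}$.

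Finally, $\tilde\varphi|_\mathcal{A}$ is identified with the target $\varphi$ of the theorem via
\[
\tilde\varphi(P) = \tau'(X_0 P) = \lim_N N\cdot \mathbb{E}\bigl[\tr_N(P_N \cdot P(\mathbf{A}_N, U_N \mathbf{B}_N U_N^*))\bigr] = \lim_N \mathbb{E}\bigl[\tau^{v_N}_{\mathbf{A}_N, U_N \mathbf{B}_N U_N^*}(P)\bigr],
\]
which simultaneously establishes the convergence of $\mathbb{E}[\tau^{v_N}_{\mathbf{A}_N, U_N \mathbf{B}_N U_N^*}]$ and its compatibility with the input data $\varphi$. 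The convergence of $\mathbb{E}[\tau_{\mathbf{A}_N, U_N \mathbf{B}_N U_N^*}]$ follows directly from Theorem~\ref{Mainth}, and the almost-sure versions come from the almost-sure conclusions of Theorem~\ref{th:free_type_B} applied to the enlarged tuple. The main obstacle is the explicit computation on $\mathcal{I}_1'$ in the first step: the decoupling of $N\tr_N(\cdots P_N \cdots)$ into a product of vector-state moments of the $\mathbf{A}_N$-blocks rests crucially on the rank-one structure $P_N = v_Nv_N^*$, and this factorization is precisely what allows the vector state to be encoded as the infinitesimal datum of a freeness-of-type-$B$ problem, thereby converting $\varphi$ into the conditional-freeness datum produced by Proposition~\ref{Prop:cfreeness}.
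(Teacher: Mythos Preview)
Your proposal is correct and follows essentially the same approach as the paper: both augment $\mathbf{A}_N$ by the rank-one projection $v_Nv_N^*$, verify the hypotheses of Theorem~\ref{th:free_type_B} on the ideal generated by the new variable via the identity $N\tr_N(v_Nv_N^*\,\cdot\,)=\langle\,\cdot\,v_N,v_N\rangle$, and then invoke Proposition~\ref{Prop:cfreeness} with $p$ equal to this new variable. The only cosmetic differences are that the paper relabels the new variable as $X_1$ (shifting the original indices) and first extends $\varphi$ to the enlarged $\mathcal{A}_1$ before reducing to monomials of the form $P_1X_1P_2$, whereas you keep the original $\mathcal{A}_1$ intact and compute directly on general monomials $W_0X_0W_1\cdots X_0W_r$; your organization is arguably slightly cleaner but the content is identical.
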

\begin{proof}
By increasing $k$ if necessary, we can assume that the one rank matrix $v_Nv_N^*$ belongs to $\mathbf{A}_N$. However, we need to check that we still have $\tau_{\mathbf{A}_N}(P)=\tau(P)+o(1) $ and $\tau_{\mathbf{A}_N}^{v_N}(P)=\varphi(P)+o(1)$ if we add $v_Nv_N^*$ to $\mathbf{A}_N$  (let say $A_n^{(1)}=v_Nv_N^*$). Denoting by $\mathcal{I}_1$ the ideal of $\mathcal{A}_1$ generated by $X_1$, the rank of $P(\mathbf{A}_N)$ is bounded by $1$ whenever $P\in \mathcal{I}_1$. So, for all $P\in \mathcal{A}_1$, 
$$\tau_{\mathbf{A}_N}(P)=\tau(P)+o(1) $$
with $\tau(P)=0$ if $P\in  \mathcal{I}_1$. Now, if $P$ is a monomial of $\mathcal{I}_1$, $$\tau_{\mathbf{A}_N}^{v_N}(P)=\langle P(\mathbf{A}_N) v_N, v_N\rangle$$ is a product of $\langle M_i(\mathbf{A}_N) v_N, v_N\rangle$ where $M_i$ are monomials which do not contains $X_1$. As consequence, for all $P\in \mathcal{A}_1$, the convergence $$\tau_{\mathbf{A}_N}^{v_N}(P)=\varphi(P)+o(1)$$
is a consequence of the convergence of $\tau_{\mathbf{A}_N}^{v_N}(M_i)$ for monomials $M_i$ which do not contains $X_1$. Because there is no loss of generality, from now on, we assume that $A_N^{(1)}=v_Nv_N^*$.

In order to apply Theorem~\ref{th:free_type_B}, let us check that for all $P\in \mathcal{I}_1$, we have the limit
$\tau_{\mathbf{A}_N}(P)=\frac{1}{N}\tau'(P)+o(N^{-1}).$ By linearity, it suffices to prove it for $P=P_1X_1P_2$. We compute
\begin{align*}
N\cdot \tau_{\mathbf{A}_N}(P_1X_1P_2)&=\Tr(P_1(\mathbf{A}_N)v_Nv_N^*P_2(\mathbf{A}_N))\\
&=\Tr(v_N^*P_2(\mathbf{A}_N)P_1(\mathbf{A}_N)v_N)\\
&=\langle P_2(\mathbf{A}_N)P_1(\mathbf{A}_N) v_N, v_N\rangle\\
&=\tau_{\mathbf{A}_N}^{v_N}(P_2P_1)\\
&=\varphi(P_2P_1)+o(1).
\end{align*}
As a consequence, $N\cdot \tau_{\mathbf{A}_N}(P)$ converges to a limit $\tau'(P)$ for all $P\in \mathcal{I}_1$, and we have
$$\tau'(P_1X_1P_2)=\varphi(P_2P_1).$$
Because $\tau_{\mathbf{A}_N}(P)=\frac{1}{N}\tau'(P)+o(N^{-1})$ for all $P\in \mathcal{I}_1$, we can apply Theorem~\ref{th:free_type_B} with $0\leq k'=1\leq k$ and $0= \ell'\leq \ell$ in order to get the existence of $\tau$, $\tau'$ and the freeness of type $B$ of $(\mathcal{A}_1,\mathcal{I}_1)$ and $(\mathcal{A}_2,\mathcal{I}_2)$, where $\mathcal{I}_1=\{0\}$.

Let $P\in \mathcal{A}$. We have
\begin{align*}\tau_{\mathbf{A}_N, U_N\mathbf{B}_NU_N^*}^{v_N}(P)&=\langle P(\mathbf{A}_N, U_N\mathbf{B}_NU_N^*) v_N, v_N\rangle\\
&=\Tr(v_Nv_N^*P(\mathbf{A}_N, U_N\mathbf{B}_NU_N^*))\\
&=N\cdot \tau_{\mathbf{A}_N, U_N\mathbf{B}_NU_N^*}(X_1P).\end{align*}
As a consequence, the convergence in expectation (resp. almost surely) of $N\cdot \tau_{\mathbf{A}_N, U_N\mathbf{B}_NU_N^*}(X_1P)$ implies the convergence of $\tau_{\mathbf{A}_N, U_N\mathbf{B}_NU_N^*}^{v_N}(P)$ in expectation (resp. almost surely) to
$\tau'(X_1P).$
Extending the domain of $\varphi$ to the whole algebra $\mathcal{A}$ by $\varphi(P):=\tau'(X_1P)$, we just proved that $$\EE[ \tau^{v_N}_{\mathbf{A}_N, U_N\mathbf{B}_NU_N^*}(P)]=\varphi(P)+o(1).$$
The relation $\varphi(P)=\tau'(X_1P)$ and the fact that $\tau(P)=0$ for $P\in \mathcal{I}$ allow us to apply Proposition~\ref{Prop:cfreeness}, yielding to the conditional freeness of $\mathcal{A}_1$ and $\mathcal{A}_2$ with respect to $(\tau,\varphi)$.
\end{proof}
The following particular case, which occurs for example if $\mathbf{A}_N$ have finite ranks, yields asymptotically  monotone independence (see also the recent \cite{collins2022matrix} for another related matrix model).
\begin{corollary}\label{th:mon}We set $\mathcal{A}:= \mathbb{C}\langle X_1,\ldots,X_k,Y_1,\ldots,Y_{\ell}\rangle$, $\mathcal{I}_1$ the ideal of $\mathbb{C}\langle X_1,\ldots,X_{k}\rangle$ generated by $X_1,\ldots,X_{k}$ and $\mathcal{A}_2:=\mathbb{C}\langle Y_1,\ldots,Y_{\ell}\rangle$.

Let $(\mathbf{A}_N)_{N\geq 1}$ and $(\mathbf{B}_N)_{N\geq 1}$ be two deterministic sequences of $k$-tuples and $\ell$-tuples of $N\times N$ matrices, and $(v_N)_{N\geq 1}$ a sequence of unit vectors of $\mathbb{C}^N$.
 We assume that
\begin{itemize}
\item  for all $P\in \mathcal{I}_1$, 
$\tau_{\mathbf{A}_N}(P)=o(1) $ and $\tau_{\mathbf{A}_N}^{v_N}(P)=\varphi(P)+o(1).$
\item 
for all $P\in \mathcal{A}_2$, 
$\tau_{\mathbf{B}_N}(P)=\tau(P)+o(1)$,
\end{itemize}
Let $(U_N)_{N\geq 1}$ be a sequence of Haar distributed unitary  random matrices of size $N$. Then,
\begin{itemize}
\item for all $P\in \mathcal{A}$, the limit
$\EE[ \tau^{v_N}_{\mathbf{A}_N, U_N\mathbf{B}_NU_N^*}(P)]=\varphi(P)+o(1)$ exists,
\item $(\mathcal{I}_1,\mathcal{A}_2)$ is monotonically independent with respect to $\varphi$, and $\varphi_{|\mathcal{A}_2}=\tau_{|\mathcal{A}_2}$.
\end{itemize}
Furthermore, if $\mathbf{A}_N$ and $\mathbf{B}_N$ are self-adjoint, then,\begin{itemize}
\item for all $P\in \mathcal{A}$, 
$\tau_{\mathbf{A}_N, U_N\mathbf{B}_NU_N^*}^{v_N}(P)=\varphi(P)+o(1)$ almost surely.
\end{itemize}
\end{corollary}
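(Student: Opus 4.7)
The plan is to deduce this corollary directly from Theorem~\ref{th:cond_freeness} together with Proposition~\ref{From_cfree_to_mon}. The key observation is that the hypotheses of the corollary, although phrased on $\mathcal{I}_1$ rather than on all of $\mathcal{A}_1$, automatically extend to $\mathcal{A}_1$ by splitting polynomials into their scalar part and their ideal part, since $\mathcal{A}_1 = \mathbb{C}\cdot 1\oplus \mathcal{I}_1$.

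First, I would extend $\tau$ to $\mathcal{A}_1$ by setting $\tau(c+Q):=c$ for $c\in\mathbb{C}$ and $Q\in \mathcal{I}_1$, and extend $\varphi$ to $\mathcal{A}_1$ by $\varphi(c+Q):=c+\varphi(Q)$. Using that $v_N$ is a unit vector, so $\langle 1\cdot v_N,v_N\rangle = 1$, a direct computation shows that for every $P\in\mathcal{A}_1$,
\[
\tau_{\mathbf{A}_N}(P)=\tau(P)+o(1)\qquad\text{and}\qquad \tau_{\mathbf{A}_N}^{v_N}(P)=\varphi(P)+o(1).
\]
Combined with the assumption $\tau_{\mathbf{B}_N}(P)=\tau(P)+o(1)$ on $\mathcal{A}_2$, this places us exactly in the setting of Theorem~\ref{th:cond_freeness}.

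Applying Theorem~\ref{th:cond_freeness} then yields the existence of the limits $\EE[\tau_{\mathbf{A}_N, U_N\mathbf{B}_NU_N^*}(P)]\to \tau(P)$ and $\EE[\tau_{\mathbf{A}_N, U_N\mathbf{B}_NU_N^*}^{v_N}(P)]\to \varphi(P)$ for every $P\in\mathcal{A}$, their almost sure counterparts in the self-adjoint case, the conditional freeness of $\mathcal{A}_1$ and $\mathcal{A}_2$ with respect to $(\tau,\varphi)$, and the identity $\varphi_{|\mathcal{A}_2}=\tau_{|\mathcal{A}_2}$. In particular, this takes care of the first and third bullet points in the statement, and of the almost sure convergence for self-adjoint matrices.

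It remains to upgrade conditional freeness to monotone independence on $(\mathcal{I}_1,\mathcal{A}_2)$. By construction, $\mathcal{I}_1\subset \ker(\tau)$, and we already know $\varphi=\tau$ on $\mathcal{A}_2$. Hence Proposition~\ref{From_cfree_to_mon} applies verbatim and gives the monotone independence of $(\mathcal{I}_1,\mathcal{A}_2)$ with respect to $\varphi$. There is no real technical obstacle here: the whole argument is a bookkeeping exercise that packages the finite-rank (or $o(1)$-trace) hypothesis on $\mathbf{A}_N$ into the conditional framework of Theorem~\ref{th:cond_freeness}, and then invokes the abstract passage from conditional freeness to monotone independence established earlier in the paper. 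The only mild care required is the verification that the extension $\varphi$ defined here agrees with the one produced by Theorem~\ref{th:cond_freeness}, which is immediate from the relation $\varphi(P)=\tau'(X_1 P)$ obtained in its proof together with our choice $A_N^{(1)}=v_Nv_N^*$.
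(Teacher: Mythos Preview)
Your proof is correct and follows exactly the same route as the paper: apply Theorem~\ref{th:conditionally_free} (after noting that the hypotheses on $\mathcal{I}_1$ extend trivially to $\mathcal{A}_1=\mathbb{C}\cdot 1\oplus\mathcal{I}_1$, so that $\mathcal{I}_1\subset\ker(\tau)$), then invoke Proposition~\ref{From_cfree_to_mon} to pass from conditional freeness to monotone independence. The paper's own proof is a two-line version of your argument; your extra paragraph about matching the two definitions of $\varphi$ is unnecessary, since the statement of Theorem~\ref{th:conditionally_free} already guarantees that the limiting $\varphi$ extends the one you fed in on $\mathcal{A}_1$.
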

\begin{proof}We apply Theorem~\ref{th:conditionally_free} with $\mathcal{I}_1\subset \ker(\tau)$, and conclude about the monotone independence thanks to Proposition~\ref{From_cfree_to_mon}.
\end{proof}
\section{Consequence for the eigenvalues of random matrices \label{sec:eigenvalues}}
\subsection{Proof of Theorem \ref{Th:poly}}
For all $P\in \mathbb{C}\langle X\rangle$, we have
$$\tau_{A_N}(P)= \frac{1}{N}\Tr(P(A_N))=\int_{\mathbb{R}}P\ d\mu_{A_N}\ \text{ and }\ \tau_{A_N}^{v_N}(P)=\langle P(A_N)v_N,v_N\rangle=\int_{\mathbb{R}}P\ d\mu_{A_N}^{v_N}.$$
For all $P\in \mathbb{C}\langle Y\rangle$, we have
$$ \tau_{B_N}(P)=\frac{1}{N}\Tr(P(B_N))=\int_{\mathbb{R}}P\ d\mu_{B_N}.$$
Setting for all $P\in \mathbb{C}\langle X\rangle$,  $$ \tau(P)=\int_{\mathbb{R}}P\ d\mu_1\ \text{ and }\ \varphi(P)=\int_{\mathbb{R}}P\ d\nu_1,$$
and for all $P\in \mathbb{C}\langle Y\rangle$, 
$$ \tau(P)=\int_{\mathbb{R}}P\ d\mu_2,$$
the assumptions of Theorem~\ref{th:conditionally_free} are fulfilled (with $k=\ell =1$) and we get Theorem \ref{Th:poly}. Moreover, we have for all $P\in \mathbb{C}\langle Y\rangle$, 
$$\varphi(P)=\int_{\mathbb{R}}P\ d\mu_2.$$
Theorem~\ref{th:conditionally_free} is also true in the orthogonal case, as explained 
in Section~\ref{Sec:orth}.
\subsection{Proof of Theorem \ref{Th:sum}}
As in the proof of Theorem \ref{Th:poly} in previous section, we can apply Theorem~\ref{th:conditionally_free} by setting for all $P\in \mathbb{C}\langle X\rangle$,  $$ \tau(P)=\int_{\mathbb{R}}P\ d\mu_1\ \text{ and }\ \varphi(P)=\int_{\mathbb{R}}P\ d\nu_1,$$
and for all $P\in \mathbb{C}\langle Y\rangle$, 
$$ \tau(P)=\int_{\mathbb{R}}P\ d\mu_2.$$We get
\begin{itemize}
\item for all $P\in \mathbb{C}\langle X,Y\rangle$, 
$ \tau_{A_N, U_NB_NU_N^*}(P)=\tau(P)+o(1)$ almost surely,
\item for all $P\in \mathbb{C}\langle X,Y\rangle$, 
$\tau_{A_N, U_NB_NU_N^*}^{v_N}(P)=\varphi(P)+o(1)$ almost surely,
\item $\mathbb{C}\langle X\rangle$ and $\mathbb{C}\langle Y\rangle$ are conditionally free with respect to $(\tau,\varphi)$ and $\varphi_{|\mathbb{C}\langle Y\rangle}=\tau_{|\mathbb{C}\langle Y\rangle}$.
\end{itemize}
Note that $\mu_1$ is the distribution of $X$ with respect to $\tau$, $\nu_1$ is the distribution of $X$ with respect to $\varphi$, and $\mu_2$ is the distribution of $Y$ with respect to $\tau$ and with respect to $\varphi$. As a consequence, $\mu_1\boxplus\mu_2$ is the distribution of $X+Y$ with respect to $\tau$ and $\nu_1{\ \!}_{\mu_1\!\!}\boxplus_{\ \!\!\mu_2}\mu_2$ is the distribution of $X+Y$ with respect to $\varphi$ (see Section~\ref{Sec:convolutions}). We conclude because for any $k\geq 0$,
\begin{align*}
&\int_{\mathbb{R}}x^k\ d\mu_{A_N+U_NB_NU_N^*}(x)\\
&=\tau_{A_N,U_NB_NU_N^*}((X+Y)^k)=\tau((X+Y)^k)+o(1)=\int_{\mathbb{R}}x^k\ d\mu_1\boxplus\mu_2(x)
\end{align*}
and
\begin{align*}
&\int_{\mathbb{R}}x^k\ d\mu^{v_N}_{A_N+U_NB_NU_N^*}(x)=\tau^{v_N}_{A_N,U_NB_NU_N^*}((X+Y)^k)=\varphi((X+Y)^k)+o(1)\\
&=\int_{\mathbb{R}}x^k\ d\nu_1\prescript{}{\mu_1}{\boxplus}^{}_{\mu_2} \mu_2(x).\end{align*}
Here again, Theorem~\ref{th:conditionally_free} is also true in the orthogonal case, as explained 
in Section~\ref{Sec:orth}.
\subsection{Proof of Theorem \ref{Th:outliers}}
We follow the approach of Noiry in \cite{Noiry2019}. We apply Theorem \ref{Th:sum} with $v_N$ being one eigenvector associated to the eigenvalue $\theta_N$. In this particular case, $\mu_{A_N}^{v_N}=\delta_{\theta}$ and we get almost surely the following convergence in moments
$$\mu_{A_N+U_NB_NU_N^*}^{v_N}\to \delta_\theta\prescript{}{\mu_1}{\boxplus}^{}_{\mu_2} \mu_2.$$
As $\mu_1$ and $\mu_2$ are compactly supported, it is also the case for $\delta_\theta\prescript{}{\mu_1}{\boxplus}^{}_{\mu_2} \mu_2$, and the convergence in moments is also a weak convergence. In order to conclude, it remains to prove that for any open interval $I \subset \mathbb{R}\setminus supp(\mu_1\boxplus \mu_2)$ containing $\rho$, we have $$\delta_\theta\prescript{}{\mu_1}{\boxplus}^{}_{\mu_2} \mu_2(I)=\frac{1}{\omega'_1(\rho)}.$$Indeed, it would implies the limit
$$\sum_{u\in E_N}\left|\langle u,v_N\rangle \right|^2 =\mu_{A_N+U_NB_NU_N^*}^{v_N}(I)=\delta_\theta\prescript{}{\mu_1}{\boxplus}^{}_{\mu_2} \mu_2(I)+o(1)= \frac{1}{\omega'_1(\rho)}+o(1)$$
and consequently the existence of an eigenvalue of $A_N+U_NB_NU_N^*$ in any neighbourhood of $\rho$ for sufficiently large $N$.

The value of $\delta_\theta\prescript{}{\mu_1}{\boxplus}^{}_{\mu_2} \mu_2(\{\rho\})$ is given by the residue of $$G_{\delta_\theta\prescript{}{\mu_1}{\boxplus}^{}_{\mu_2} \mu_2}(z)=\frac{1}{\omega_1(z)-\theta}=\frac{1}{\omega_1(z)-\omega_1(\rho)}$$
at $\rho$, which is
$$\lim_{z\to \rho}\frac{z-\rho}{\omega_1(z)-\omega_1(\rho)}=\frac{1}{\omega'_1(\rho)}.$$
As $\omega_1$ extends meromorphically  to $\mathbb{C}\setminus supp(\mu_1\boxplus\mu_2),$ maps $\C_+$ to $\C_+$ and is  real-valued on $\mathbb{R}\setminus supp(\mu_1\boxplus\mu_2)$, it is injective on $I$. In particular, $\rho$ is the only pole  in $I$ of $$G_{\delta_\theta\prescript{}{\mu_1}{\boxplus}^{}_{\mu_2} \mu_2}(z)=\frac{1}{\omega_1(z)-\omega_1(\rho)}.$$Note that $G_{\delta_\theta\prescript{}{\mu_1}{\boxplus}^{}_{\mu_2} \mu_2}$ is analytic on $ I\setminus \{\rho\}$, which means that $\delta_\theta\prescript{}{\mu_1}{\boxplus}^{}_{\mu_2} \mu_2$ is absolutely continuous with respect to the Lebesgue measure on $ I\setminus \{\rho\}$. Because $\omega_1$ is real-valued on $I\setminus \{\rho\}$, the Stieltjes-Perron formula gives a vanishing density
$$-\frac{1}{\pi}\lim_{\varepsilon \to 0^+}\Im G_{\delta_\theta\prescript{}{\mu_1}{\boxplus}^{}_{\mu_2} \mu_2}(x+i\varepsilon)=0,$$
for $x\in I\setminus \{\rho\}$, which allows to conclude
$$\delta_\theta\prescript{}{\mu_1}{\boxplus}^{}_{\mu_2} \mu_2(I)=\delta_\theta\prescript{}{\mu_1}{\boxplus}^{}_{\mu_2} \mu_2(\{\rho\})=\frac{1}{\omega'_1(\rho)}.$$
\section{Proof of Theorem~\ref{Mainth}}\label{proof_of_Mainth}

The first result of asymptotic freeness  for large unitarily invariant matrices  was given by Voiculescu \cite{MR1094052}, relying on Gaussian computation for complex Ginibre matrices, polar decomposition and concentration. One of the  first direct  proof (using explicit moments computation)   is given by Biane in~\cite[Section 9]{Biane1998}  and later by Collins \cite{MR1959915}. The former proof, closely related to~\cite{Xu1997}, has been extended to the orthogonal and symplectic invariance by Collins and \'Sniady in~\cite{Collins2004}. Detailed  arguments can now be found in~\cite[Lecture 23]{Nica2006} and in~\cite[Chapter 3]{Levy2011}.

\subsection{A first proof with the Weingarten function}
\label{sec:Weingarten}
This first proof relies  almost solely on \cite[Lecture 23]{Nica2006} and a second-order expansion of the Weingarten function found in \cite{MR1959915,Collins2004}. See also \cite{WCIntro} for a recent introduction to Weingarten calculus with useful references.

\vspace{0,5 cm}

\noindent For  $P_1,\ldots,P_n\in \mathbb{C}\langle X_1,\ldots,X_k\rangle$ and $Q_1,\ldots,Q_n\in \mathbb{C}\langle X_1,\ldots,X_\ell\rangle$, let 
$$ A^{(k)}= P_k(\mathbf{A}_N) \text{  and  }  B^{(k)}= P_k(\mathbf{B}_N), \text{ for } 1\le k\le n.$$
When $M_1,\ldots,M_n\in M_N(\C)$ are $n$ matrices and $\sigma \in S_n $  is a permutation, define 
$$\tr_\sigma(M_i,1\le i\le n)= \prod_{(i_1\ldots i_k) \text{ cycle of } \sigma} \tr_N(M_{i_1}\ldots M_{i_k}).$$ 
It is enough to prove the identity of Theorem \ref{proof_of_Mainth} for $P$ of the form $$P=P_1(X) Q_1(Y)\ldots P_n(X) Q_n(Y).$$  Therefor, we recall  first the computation of
\begin{equation}
\mathbb{E}[\tau_{\mathbf{A}_N, U_N\mathbf{B}_NU_N^*}(P)]=\mathbb{E}\left[\tr_N\left( UA^{(1)}U^*B^{(1)}\cdots UA^{(n)}U^*B^{(n)}\right)\right],\label{eq:UInt}
\end{equation}
where $U_N$ is a Haar distributed random variable.  For any $\sigma \in S_n,$
$$\Wg_N(\sigma) = \EE(U_{1,1}\overline{U}_{1,\sigma(1)}\ldots U_{n,n}\overline{U}_{n,\sigma(n)})$$
is called the \emph{Weingarten function}. Remarkably \cite{Collins2004}, all moments in $U$ can be computed\footnote{The formula below seems to have first appeared in the physics paper \cite{Samuel_1980}.} thanks to $\Wg_N:$  for any $i,j,i',j'\in \{1,\ldots,N\}^n,$
\begin{equation}
 \EE(U_{i_1,j_1}\overline{U}_{i'_1,j'_1}\ldots U_{i_n,j_n}\overline{U}_{i'_n,j'_n})=\sum \Wg_N(\alpha^{-1}\beta), \label{eq:WeingU}
\end{equation}
where the sum is over all $\alpha,\beta\in S_n$ with $i'_k=i_{\alpha(k)},j'_k=j_{\beta(k)}$ for all $1\le k\le n.$ Expanding the traces and using \eqref{eq:WeingU}   imply\footnote{see  ~\cite[Lecture 23]{Nica2006} (p. 386) or ~\cite[Section 3.3]{Levy2011}} that   the integral \eqref{eq:UInt}  is equal to 
\begin{equation}
\sum_{\alpha,\beta\in S_n} \Wg_N(\alpha^{-1}\beta) N^{\#\alpha+\# \beta^{-1} \gamma_n -1} \tr_\alpha(A^{(k)},1\le k \le n)\tr_{\beta^{-1}\gamma_n} (B^{(k)},1\le k \le n), \label{eq: N Formula Mixed Moment}
\end{equation}
where $\gamma_n$  is the full cycle $(1\ldots n)$ and for any permutation $\sigma$, $\# \sigma$ denotes the number of cycles of $\sigma$. Now it is known, see for instance Corollary 2.7 of \cite{Collins2004},  that for all $\sigma\in  S_n,$
$$\mathrm{Moeb}_N(\sigma):=N^{2n-\#\sigma }\Wg_N(\sigma)= \mathrm{Moeb}(\sigma)+ N^{-2} \Mp(\sigma)+O(N^{-4}) ,$$
where $\M$ and $\Mp$ are  functions on $S_n$. For any permutations $\alpha,\beta,\gamma\in  S_n,$ consider 
 $$\df(\alpha,\beta,\gamma)=2n-(\#\alpha+\# \alpha^{-1}\beta+\# \beta^{-1}\gamma)+\#\gamma$$
 so  that, noting that $\# \gamma_n=1,$ \eqref{eq: N Formula Mixed Moment}  reads
\begin{equation}
\sum_{\alpha,\beta\in S_n} \M_N(\alpha^{-1}\beta) N^{-\df(\alpha,\beta,\gamma)} \tr_\alpha(A^{(k)},1\le k \le n)\tr_{\beta^{-1}\gamma_n} (B^{(k)},1\le k \le n). \label{eq:Mixed Moments defect}
\end{equation}
We conclude with a suitable triangular inequality. Setting $ |\sigma| = n- \#\sigma$ for all $\sigma\in S_n$
defines a distance on $S_n$. It is the distance of the Cayley graph of  $S_n$  generated by all transpositions. For $\alpha,\beta\in S_n,$ denote 
by 
\begin{equation}
\df(\alpha,\beta)= |\alpha|+ |\alpha^{-1}\beta|-|\beta|\label{eq:Defect SymGp}
\end{equation}
the defect of $(1,\alpha, \beta)$ from being a geodesic triangle. It is well known that 
\begin{equation}
\df(\alpha,\beta)\in 2\N \label{eq:Even Defect SymGp}.
\end{equation}
Note that for any $\alpha,\beta,\gamma\in S_n,$
$$\df(\alpha,\beta,\gamma)=\df(\alpha,\beta)+\df(\beta,\gamma)\in 2 \N$$
so  that our expectation \eqref{eq:UInt} is 
\begin{align*}
&\sum_{\alpha,\beta\in S_n: \df(\alpha,\beta,\gamma)=0} \mathrm{Moeb}(\alpha^{-1}\beta) \tr_\alpha(A^{(k)},1\le k \le n)\tr_{\beta^{-1}\gamma_n} (B^{(k)},1\le k \le n)\\
&+ N^{-2} R_2(A,B)+ O(N^{-4}),
\end{align*}
where  $R_2(A,B)$ is a  polynomial of $(\tr_{\alpha}(A^{(k)},1\le k\le n),\tr_{\alpha}(B^{(k)},1\le k\le n))_{\alpha\in S_n}.$  
Setting  $\alpha\prec\beta$ whenever $\df(\alpha,\beta)=0,$  define 
\begin{equation}
\kappa_\beta(A)=  \sum_{\alpha\prec \beta}\mathrm{Moeb}(\alpha^{-1}\beta) \tr_\alpha(A), \label{eq:DefFreeCum}
\end{equation}
and 
$$\kappa^{(2)}_\beta(A)= \sum_{\alpha\prec \beta} \Mp(\alpha^{-1}\beta) \tr_\alpha(A)+ \sum_{\alpha: \df(\alpha,\beta)=2} \mathrm{Moeb}(\alpha^{-1}\beta) \tr_\alpha(A). $$
We can then write  \eqref{eq:UInt} as 
\begin{equation}
\sum_{\beta\prec \gamma_n} \kappa_{\beta}(A) \tr_{\beta^{-1} \gamma_n}(B^{(k)},1\le k\le n) + N^{-2} R_2(A,B)+O(N^{-4}),\label{eq:AlternatedProduct Expansion2}
\end{equation}
where
\begin{align*}
R_2(A,B)&=  \sum_{\beta\prec \gamma_n} \kappa^{(2)}_{\beta}(A) \tr_{\beta^{-1} \gamma_n}(B^{(k)},1\le k\le n)\\
&+ \sum_{\beta: \df(\beta,\gamma)=2} \kappa_\beta(A)  \tr_{\beta^{-1} \gamma_n}(B^{(k)},1\le k\le n).
\end{align*}
To conclude it is enough to prove that  for any fixed $N,$
\begin{equation}
\sum_{\beta\prec \gamma_n} \kappa_{\beta}(A) \tr_{\beta^{-1} \gamma_n}(B^{(k)},1\le k\le n) =\tau_{\mathbf{A}_N}\star \tau_{\mathbf{B}_N}(P),\label{eq:Free alternated P}
\end{equation}
where $P$ is  $P_1(X) Q_1(Y)\ldots P_n(X) Q_n(Y)$ and  $P_{k},Q_k(Y)$ denote   $Q_k(Y_1,\ldots, Y_l)$ for  $1\le k\le n$. Indeed, this last identity  yields

$$\mathbb{E}\left[\tau_{\mathbf{A}_N,U_N\mathbf{B}_NU_N^*}(P)\right]=\tau_{\mathbf{A}_N}\star \tau_{\mathbf{B}_N}(P)+N^{-2}\Psi_{\mathbf{A}_N,\mathbf{B}_N}(P)+O(N^{-4}),$$
where
\begin{align*}
\Psi_{\mathbf{A}_N,\mathbf{B}_N}(P)&=  \sum_{\beta\prec \gamma_n} \kappa^{(2)}_{\beta}(A) \tr_{\beta^{-1} \gamma_n}(B^{(k)},1\le k\le n)\\
&+ \sum_{\beta: \df(\beta,\gamma)=2} \kappa_\beta(A)  \tr_{\beta^{-1} \gamma_n}(B^{(k)},1\le k\le n).
\end{align*}




By induction on the degree of $P_k,Q_k, 1\le k\le n,$ it is enough to assume that 
$$\tr_N(P_k(\mathbf{A}_N))=\tr_N(P_k(\mathbf{B}_N))=0, \,\,\text{ for } 1\le k\le n$$ and to show that $\sum_{\beta\prec \gamma_n} \kappa_{\beta}(A) \tr_{\beta^{-1} \gamma_n}(B^{(k)},1\le k\le n)=0.$   
Expanding the summand with \eqref{eq:DefFreeCum} leads to
\begin{equation*}
\sum_{\alpha\prec\beta\prec \gamma_n} \M(\alpha)\tr_{\alpha^{-1}\beta}(A) \tr_{\beta^{-1} \gamma_n}(B^{(k)},1\le k\le n).\tag{*} \label{eq:MixedMoment3}
\end{equation*}
Now for any $\alpha,\beta\in S_n$ with $\alpha\prec\beta\prec \gamma_n,$ 
$|\alpha|+|\alpha^{-1}\beta|+|\beta^{-1} \gamma_n|=|\gamma_n|=n-1,$ so that 
$$\min(|\alpha^{-1}\beta|,  |\beta^{-1} \gamma_n|)\le \frac {n-1}2.$$
Since a product  of $k$ transpositions has at least $n-2k$  fixed points, it follows that $\alpha^{-1}\beta$ or $\beta^{-1} \gamma_n$ has a fixed point. But since $P_k,Q_k, 1\le k\le n$ are centered, for any permutation $\sigma \in S_n$ with at least one fixed point
$$\tr_\sigma(A^{(k)},1\le k \le n)=\tr_\sigma(B^{(k)},1\le k \le n)=0$$
and the expression \eqref{eq:MixedMoment3} vanishes.

\begin{rem} Alternatively, \eqref{eq:Free alternated P}  can be proved identifying the poset appearing in \eqref{eq:DefFreeCum}. Consider the poset $[1,\gamma_n]=\{\sigma\in S_n:  \sigma\prec \gamma_n\}$ endowed with the order $\prec$ and the poset $NC(n)$ of non-crossing partitions of $\{1,\ldots,n\}$, where for any $\pi_1,\pi_2\in NC(n),$ $\pi_1\preccurlyeq \pi_2$
 if each  block of $\pi_1$ is included in a block of $\pi_2$.   It can be shown \cite{BIANE199741} that  mapping a permutation $\sigma\in [1,\gamma_n]$ to the partition  of $\{1,\ldots,n\}$ into  orbits of $\sigma$ defines a bijection 
 between the two posets, mapping $\sigma^{-1}\gamma_n$ to the Kreweras complement $K(\pi_{\sigma})$ of $\pi_{\sigma}$.  Let us set  
 \[ \tau_{\pi_\sigma}(A)= \tr_\sigma(A)\text{ and } \kappa_{\pi_\sigma}(A)= \prod_{i_1<\ldots < i_l \text{ block of }\pi_\sigma}\kappa_{\# b}(A^{(i)},i\in b),\forall \beta \in [1,\gamma_n] \]
 where   $(\kappa_m)_{m\ge 1}$ are the free-cumulants associated to $\tau_{\mathbf{A}_N}.$ 
 Using \eqref{eq:AlternatedProduct Expansion2} taking the identity matrix in place of $B$, it follows that 
 \[\kappa_{\beta}(A)=\kappa_{\pi_\beta}(A),\forall \beta \in [1,\gamma_n].\]
The claim \eqref{eq:Free alternated P} is now equivalent to the standard relation 
\[ \tau_{\mathbf{A}_N}\star \tau_{\mathbf{B}_N}(P)=\sum_{\pi \in NC(n) } \kappa_{\pi}(A) \tau_{K(\pi)}(B).  \]

%
\end{rem}
%

\subsection{A second proof with matricial cumulants}
\label{section:matricial_cumulants}
We shall use here the notion of matricial cumulants first introduced by Capitaine et Casalis~\cite{CapitaineCum,Capitaine2008} and generalized in \cite{gabriel2015combinatorial1,gabriel2015combinatorial2,gabriel2015combinatorial3}.  After recalling their definition following \cite{gabriel2015combinatorial1,gabriel2015combinatorial2,gabriel2015combinatorial3}, it shall  enable us to give two proofs of  Theorem \ref{Mainth}, relying only on  the Schur-Weyl duality, as well as an order  expansion  in $N^{-2}$ for the joint non-commutative distribution  of $(\mathbf{A}_N,\mathbf{B}_N).$ 
\subsubsection{Schur-Weyl duality and matricial cumulants}




The symmetric group $S_n$ acts on the space of tensors  $(\C^{N})^{\ts n}$  by permuting them: 
$$ \sigma. v_1\ts\ldots \ts v_n =v_{\sigma^{-1}(1)}\ts \ldots\ts v_{\sigma^{-1}(n)}, \hspace{1 cm}\forall \sigma\in S_n, v_1,\ldots, v_n\in \C^N.$$
When $\sigma\in S_n$ is a permutation, we shall  use abusively the same symbol  $\sigma $ for the endomorphism  acting on tensors. The matrix of the  adjoint of the endomorphism $\sigma$ in the canonical basis of $(\C^{N})^{\ts n}$ is denoted by $\sigma^t$. Let  $\Tr_{n,N}$ be the non-normalised trace on $M_N(\C)^{\ts n}$. These notions relate to our previous setting in the following way.

\begin{lemme} When $M_1,\ldots,M_n\in M_N(\C)$ are $n$ matrices and $\sigma\in S_n,$
$$\Tr_{n,N}( M_1\ts M_2\ts \ldots \ts M_n \circ \sigma^t )=  \prod_{(i_1\ldots i_k) \text{ cycle of } \sigma} \Tr_N(M_{i_1}\ldots M_{i_k}).$$
\end{lemme}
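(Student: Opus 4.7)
The plan is a direct computation by expanding the trace in the canonical basis and then factoring the resulting sum along the cycle decomposition of $\sigma$.

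First I would unwind the meaning of $\sigma^t$. The matrix of the permutation endomorphism $\sigma$ in the canonical basis of $(\mathbb{C}^N)^{\otimes n}$ is an orthogonal $0/1$-matrix, so its adjoint/transpose coincides with $\sigma^{-1}$ as an operator. Consequently, for all multi-indices $(i_1,\ldots,i_n)\in\{1,\ldots,N\}^n$,
\[ \sigma^t\bigl(e_{i_1}\otimes\cdots\otimes e_{i_n}\bigr) = e_{i_{\sigma(1)}}\otimes\cdots\otimes e_{i_{\sigma(n)}}. \]
This is the only nontrivial identification needed; the rest is bookkeeping.

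Next I would expand the trace in the canonical basis. Applying $M_1\otimes\cdots\otimes M_n$ after $\sigma^t$ and pairing against $e_{i_1}\otimes\cdots\otimes e_{i_n}$ immediately gives
\[ \Tr_{n,N}\bigl((M_1\otimes\cdots\otimes M_n)\circ\sigma^t\bigr) = \sum_{i_1,\ldots,i_n=1}^{N}\prod_{k=1}^n (M_k)_{i_k,i_{\sigma(k)}}. \]
Now I would observe that the index variables $i_1,\ldots,i_n$ separate according to the orbits of $\sigma$: the product over $k$ involves only matrix entries of $M_k$ labelled by $(i_k,i_{\sigma(k)})$, so distinct cycles of $\sigma$ involve disjoint sets of dummy indices and the sum factorises as a product over cycles of $\sigma$.

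Finally, for a single cycle $(j_1\,j_2\,\ldots\,j_k)$, where $\sigma(j_\ell)=j_{\ell+1}$ (indices taken mod $k$), the corresponding factor is
\[ \sum_{i_{j_1},\ldots,i_{j_k}=1}^{N}(M_{j_1})_{i_{j_1},i_{j_2}}(M_{j_2})_{i_{j_2},i_{j_3}}\cdots(M_{j_k})_{i_{j_k},i_{j_1}} = \Tr_N(M_{j_1}M_{j_2}\cdots M_{j_k}), \]
which is exactly the desired cyclic trace. Taking the product over all cycles yields the identity. The only real obstacle is a notational one: one must keep track of the fact that $\sigma^t=\sigma^{-1}$ on the tensor space (because of orthogonality), which is precisely what makes the cyclic order of matrices inside each $\Tr_N$ agree with the cycle $(i_1\,\ldots\,i_k)$ of $\sigma$ rather than its reverse.
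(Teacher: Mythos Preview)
Your proof is correct and complete. The paper actually states this lemma without proof, treating it as a standard computation; your direct expansion in the canonical basis, together with the observation that $\sigma^t=\sigma^{-1}$ as an operator so that $\sigma^t(e_{i_1}\otimes\cdots\otimes e_{i_n})=e_{i_{\sigma(1)}}\otimes\cdots\otimes e_{i_{\sigma(n)}}$, is exactly the intended elementary argument.
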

\noindent In particular, using the above  notation, 
\begin{equation}
\tr_\sigma(M_i, 1\le i\le n)= N^{-\#\sigma} Tr_{n,N}( M_1\ts M_2\ts \ldots \ts M_n \circ \sigma^t).
\end{equation}
Recall that $ |\sigma| = n- \#\sigma $ defines a distance on $S_n$. The Schur-Weyl duality can be understood as follows. 
\begin{theorem}\label{th:Schur-Weyl} When $M_1,\ldots,M_n\in M_N(\C)$ are $n$ matrices, $U$ is a Haar distributed random unitary matrice, there exist  coefficients   $(\kappa^N_\sigma(\mathbf{M}))_{\sigma\in S_n}$ such that
\begin{align}
\label{eq:Schur-Weyl}
\EE_U[ (UM_1U^*)\ts \ldots \ts (UM_nU^*)]=\sum_{\sigma\in S_n} N^{-|\sigma|} \kappa^N_\sigma(\mathbf{M}) \sigma  \in \mathrm{End}((\C^{N})^{\ts n}).
\end{align}
These coefficients are called the \emph{matricial cumulants}  of the tuple $\mathbf{M} = (M_1,\ldots,M_n)$ \cite{gabriel2015combinatorial1,gabriel2015combinatorial2,gabriel2015combinatorial3}. They are uniquely defined as long as $N\ge n.$
\end{theorem}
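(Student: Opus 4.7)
The plan is to deduce the existence of the coefficients from Schur-Weyl duality and the uniqueness from the classical linear independence of the permutation operators on $(\C^N)^{\ts n}$ when $N \ge n$. The coefficient $N^{-|\sigma|}$ in the statement is purely a normalization convention, so we actually prove the existence of coefficients $c_\sigma^N(\mathbf{M})$ with
\[
\EE_U[(UM_1U^*)\ts \ldots \ts (UM_nU^*)] = \sum_{\sigma\in S_n} c_\sigma^N(\mathbf{M})\, \sigma,
\]
and then set $\kappa^N_\sigma(\mathbf{M}) := N^{|\sigma|} c_\sigma^N(\mathbf{M})$.

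The first step is an invariance argument. For any fixed $V \in U(N)$, the left-hand side $\EE_U[\ldots]$ is unchanged when $U$ is replaced by $VU$, because $VU$ is again Haar distributed. On the other hand, substituting $VU$ conjugates each $UM_iU^*$ by $V$, so the left-hand side gets conjugated by $V^{\ts n}$ (the diagonal representation of $U(N)$ on $(\C^N)^{\ts n}$). Hence $\EE_U[(UM_1U^*)\ts \ldots \ts (UM_nU^*)]$ belongs to the commutant of $V^{\ts n}$ for every $V \in U(N)$.

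The second step is to invoke the classical Schur-Weyl duality, which asserts that the commutant of the diagonal action of $U(N)$ on $(\C^N)^{\ts n}$ is exactly the linear span of the permutation operators $\{\sigma : \sigma \in S_n\}$. This immediately yields the existence of coefficients $c_\sigma^N(\mathbf{M})$, and, since the left-hand side is linear in the tuple $\mathbf{M}$, so are the coefficients.

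For uniqueness, one uses the fact that when $N \ge n$ the operators $\{\sigma : \sigma \in S_n\} \subset \mathrm{End}((\C^N)^{\ts n})$ are linearly independent. A standard way to see this is to test a linear relation $\sum_\sigma \lambda_\sigma \sigma = 0$ against canonical basis tensors $e_{i_1} \ts \cdots \ts e_{i_n}$ with pairwise distinct indices $i_1, \ldots, i_n$ (which is possible precisely because $N \ge n$): each permutation sends such a tensor to a distinct basis vector, forcing all $\lambda_\sigma = 0$. This gives uniqueness of the $c_\sigma^N(\mathbf{M})$, hence of the $\kappa^N_\sigma(\mathbf{M})$. There is no serious obstacle in this proof; the only point requiring mild care is the justification for the $N^{-|\sigma|}$ normalization, which is simply absorbed into the definition of $\kappa^N_\sigma$ and is motivated a posteriori by the order in $N$ at which these coefficients stabilize (as revealed by the Weingarten expansion of the previous subsection).
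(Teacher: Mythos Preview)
Your proof is correct and is the standard Schur--Weyl argument. The paper does not actually prove this theorem: it states the result as a definition/known fact, citing \cite{Capitaine2008,gabriel2015combinatorial1,gabriel2015combinatorial2}, and then proceeds to use it. Your argument --- Haar invariance to show the expectation lies in the commutant of the diagonal $U(N)$-action, classical Schur--Weyl duality to identify that commutant with the span of permutation operators, and linear independence of the permutation operators when $N\ge n$ to obtain uniqueness --- is precisely the canonical justification underlying those references, so there is nothing to compare against in the paper itself.
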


The cumulants are not uniquely defined when $N <n$. For any such $n$ and any family $(a_\sigma)_{\sigma \in S_n}$, the equality $\kappa_{\sigma}^{N}(\mathbf{M}) = a_\sigma$ for any $\sigma \in S_n$ means that Equation \eqref{eq:Schur-Weyl} holds when one  replaces $\kappa_{\sigma}^{N}(\mathbf{M})$ by $a_\sigma$. 

When $M_1,\ldots, M_n$ are random, the expectation of the matricial cumulants are obtained by considering the decomposition of $\EE[ (UM_1U^*)\ts \ldots \ts (UM_nU^*)]$, with $U$ a Haar distributed random matrix independent from $M_1,\ldots,M_n$: 
\begin{align}
\label{eq:Schur-Weyl-random}
\EE[ (UM_1U^*)\ts \ldots \ts (UM_nU^*)]=\sum_{\sigma\in S_n} N^{-|\sigma|} \EE\left[\kappa^N_\sigma(\mathbf{M})\right] \sigma.
\end{align}

This definition  was given in this form in \cite{Capitaine2008,gabriel2015combinatorial1,gabriel2015combinatorial2}, together with generalisation to other compact groups \cite{gabriel2015combinatorial1,gabriel2015combinatorial2}. Note that the  first definition of \cite{CapitaineCum} in the unitary case uses the Weingarten function: in Section \ref{sec:computation}, we will see that both definitions are equivalent. Yet, a benefit from the formulation above is that it follows easily from it that mixed cumulants in independent unitary invariant random matrices vanish in expectation. 

\begin{theorem}
\label{th:mixed_vanish_cumulants}
Let $\mathbf{A}$ and $\mathbf{B}$ be two deterministic $k$-tuple and $\ell$-tuple of $N\times N$ matrices and $U$ be a Haar distributed unitary matrix. For any  $n$ and any $C_1,\ldots,C_n \in \mathbf{A} \cup U\mathbf{B}U^*$, 
\begin{enumerate}
\item for any $\sigma\in S_n$, if there exist two indices $i\neq j$ in the same cycle of $\sigma$ such that $C_i \in \mathbf{A}$ and $C_j \in U\mathbf{B}U^*$, then $\mathbb{E}[\kappa^N_{\sigma}(C_1,\ldots,C_n)]=0$, 
\item for any $0\leq m \leq n$, any $\sigma_1\in S_m$, any $\sigma_2\in S_{n-m}$, if $C_i \in \mathbf{A}$ for $i\in {1,\ldots,m}$ and $C_{j} \in U\mathbf{B}U^*$, i.e. $C_j = UB^{(c_j)}U^*$for $j\in {m+1,\ldots,n}$ then 
$$\mathbb{E}[\kappa^{N}_{\sigma_1\otimes \sigma_2}(C_1,\ldots,C_n)]= \kappa^N_{\sigma_1}(C_1,\ldots,C_m)\kappa^N_{\sigma_2}\left(B^{(c_{m+1})},\ldots,B^{(c_{n})}\right).$$
\end{enumerate} 
\end{theorem}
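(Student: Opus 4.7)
The plan is to combine the defining Schur--Weyl identity \eqref{eq:Schur-Weyl-random} with the left-invariance of the Haar measure to reduce the problem to a tensor factorization. I will introduce an auxiliary Haar unitary $V$ independent of $U$ and of the matrices in $\mathbf{A}$ and $\mathbf{B}$, and set $W=VU$; by left-invariance of the Haar measure on $U(N)$, the pair $(V,W)$ is distributed as two independent Haar unitaries. Writing $I_A=\{i: C_i\in \mathbf{A}\}$, $I_B=\{i: C_i\in U\mathbf{B} U^*\}$, and setting $\tilde{C}_i=A^{(a_i)}$ for $i\in I_A$ and $\tilde{C}_i=B^{(b_i)}$ for $i\in I_B$, one checks that $V C_i V^* = V\tilde{C}_i V^*$ for $i\in I_A$ and $V C_i V^* = W\tilde{C}_i W^*$ for $i\in I_B$. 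The pivotal point of this substitution is that it has converted the original $U$-dependence into a $W$ that is independent of $V$.

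In the ordering of part~(2), namely $I_A=\{1,\ldots,m\}$ and $I_B=\{m+1,\ldots,n\}$, the tensor $V C_1 V^*\otimes\cdots\otimes V C_n V^*$ splits into a block depending only on $V$ (positions $1,\ldots,m$) and a block depending only on $W$ (positions $m+1,\ldots,n$), so its expectation factorizes by independence. Applying \eqref{eq:Schur-Weyl} to each factor and using $|\sigma_1\otimes\sigma_2|=|\sigma_1|+|\sigma_2|$, I obtain
\[
\EE\bigl[V C_1 V^*\otimes\cdots\otimes VC_n V^*\bigr]=\sum_{\sigma_1\in S_m,\,\sigma_2\in S_{n-m}}N^{-|\sigma_1\otimes\sigma_2|}\kappa^N_{\sigma_1}(\tilde{C}_1,\ldots,\tilde{C}_m)\,\kappa^N_{\sigma_2}(\tilde{C}_{m+1},\ldots,\tilde{C}_n)\,(\sigma_1\otimes\sigma_2).
\]
On the other hand, \eqref{eq:Schur-Weyl-random} applied to $(C_1,\ldots,C_n)$, with $V$ playing the auxiliary role, rewrites the same expectation as $\sum_{\sigma\in S_n}N^{-|\sigma|}\EE[\kappa^N_\sigma(C_1,\ldots,C_n)]\,\sigma$. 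Identifying coefficients, which is justified by the linear independence of permutation endomorphisms in $\mathrm{End}((\C^N)^{\otimes n})$ when $N\ge n$, yields the formula of part~(2) and the vanishing $\EE[\kappa^N_\sigma(C_1,\ldots,C_n)]=0$ for every $\sigma$ not of the block form $\sigma_1\otimes\sigma_2$, i.e.\ for every $\sigma$ having a cycle meeting both $\{1,\ldots,m\}$ and $\{m+1,\ldots,n\}$, which is part~(1) in this special ordering.

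For part~(1) with an arbitrary ordering of the $C_i$, I will reduce to the previous case by conjugating the tensor identity by any permutation $\pi\in S_n$ sending $\{1,\ldots,|I_A|\}$ bijectively onto $I_A$; since the map $\sigma\mapsto \pi\sigma\pi^{-1}$ is a bijection of $S_n$ preserving cycle structure, the property ``$\sigma$ has a cycle meeting both $I_A$ and $I_B$'' translates correctly from the special to the general ordering. The main obstacle I anticipate is of a technical nature: when $N<n$ the cumulants are not uniquely defined, so the statements $\EE[\kappa^N_\sigma]=0$ and $\EE[\kappa^N_{\sigma_1\otimes\sigma_2}]=\kappa^N_{\sigma_1}\kappa^N_{\sigma_2}$ must be read as asserting the existence of an admissible choice of values satisfying the defining relation together with these identities; the construction above produces precisely such a choice in a uniform way, so no separate argument is needed for small $N$.
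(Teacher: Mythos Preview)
Your proof is correct and follows essentially the same approach as the paper: introduce an auxiliary Haar unitary $V$ independent of $U$, use left-invariance so that $(V,VU)$ are independent Haar unitaries, factorize the expected tensor into a $V$-block on the $\mathbf{A}$ positions and a $VU$-block on the $\mathbf{B}$ positions, apply the Schur--Weyl expansion \eqref{eq:Schur-Weyl} to each factor, and compare with \eqref{eq:Schur-Weyl-random} to identify coefficients. Your treatment is slightly more explicit than the paper's (naming $W=VU$, spelling out the reduction to the ordered case via conjugation by $\pi$, and addressing the $N<n$ non-uniqueness), but the argument is the same.
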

 
Note that, using a concentration argument (Theorem \ref{th:concentration_unitary}), one can control the mixed matricial cumulants of $(\mathbf{A}, U\mathbf{B}U^*)$ and not only their expectations. In order to be self-contained, we provide the proof of Theorem \ref{th:mixed_vanish_cumulants}. 

\begin{proof}
We assume that $\mathbf{A}$ and $\mathbf{B}$ are both composed of one matrix, namely $A$ and $B$, the general case can be easily deduced using the same arguments. 
Assume that $C_1,\ldots,C_m = A$ and $C_{m+1}, \ldots, C_n = UBU^*$. Let $V$ be a Haar distributed unitary matrix, independent from $U$. The tensor $\EE[ (VC_1V^*)\ts \ldots \ts (VC_nV^*)]$ is equal to
\begin{align*}
\EE_{U,V}\left[ (VAV^*)^{\ts m} \ts (UBU^*)^{\ts n-m}\right] &= \EE_{V}\left[ (VAV^*)^{\ts m} \right] \ts \EE_{U} \left[(UBU^*)^{\ts n-m}\right]. 
\end{align*}
From the definition of the matricial cumulants, $\EE[ (VC_1V^*)\ts \ldots \ts (VC_nV^*)]$ is: 
\begin{align*}
\sum_{\sigma_1 \in S_m,\sigma_2 \in S_{n-m}} N^{-|\sigma_1|} N^{-|\sigma_2|} \kappa^N_{\sigma_1}(A)\kappa^N_{\sigma_2}(B)\left(\sigma_1 \otimes \sigma_2\right). 
\end{align*}
The endomorphism $\sigma_1\otimes \sigma_2$ corresponds to the action of the permutation in $S_n$, denoted also by $\sigma_1\otimes \sigma_2$, such that: 
\begin{enumerate}
\item $\sigma_1\otimes \sigma_2(i) = \sigma_1(i)$ for any $i\in \{1,\ldots,m\}$,
\item $\sigma_1\otimes \sigma_2(m+i) = m+\sigma_2(i)$ for any $i\in \{1,\ldots,n-m\}$.
\end{enumerate}
Since $|\sigma_1|+|\sigma_2|=|\sigma_1\otimes \sigma_2|$, we obtain the equality: 
$$\sum_{\sigma\in S_n}\!\! N^{-|\sigma|} \mathbb{E}\left[\kappa_\sigma^{N}(A,\ldots,A,B,\ldots,B)\right] \!\sigma = \!\! \sum_{\sigma_1 \in S_m,\sigma_2 \in S_{n-m}}  \!\! \!\! N^{-|\sigma_1\otimes \sigma_2|} \kappa^N_{\sigma_1}(A)\kappa^N_{\sigma_2}(B) \!\left(\sigma_1 \otimes \sigma_2\right),$$ 
which allows us to conclude. 
\end{proof}

These matricial cumulants are natural approximations of free cumulants\footnote{See  \cite{gabriel2015combinatorial1,gabriel2015combinatorial2,gabriel2015combinatorial3}  where the expansion as a function of $\frac 1 N$ is understood thanks to the notion of  cumulants of higher order for  sequence of matrices  with $\tr_\sigma(M_i,1\leq i\leq n)$  having  moments with a $\frac{1}{N}$ expansion, thereby generalizing the notion of infinitesimal freeness  to higher orders.}.  Indeed, considering  the trace $\Tr_{n,N}$ of each member of Equality (\ref{eq:Schur-Weyl}) composed with a permutation $\sigma\in S_n$ leads to the following analog of moment-cumulant relation. Recall Equation (\ref{eq:Defect SymGp}) where we defined the defect $\df(\alpha,\beta)$ of $\alpha$ not being on a geodesic between the identity $1$ and $\beta$. Let  $\mathbf{M} = (M_1,\ldots,M_n)$ be a $n$-tuple of matrices in  $M_N(\C)$. 

\begin{prop} \label{Prop:Matrix Moment Cum} For all $\sigma\in S_n,$
\begin{align}
\label{eq:moment_cumulant}
\tr_\sigma(\mathbf{M} )=  \sum_{\alpha \in S_n} N^{-\df(\alpha,\sigma)} \kappa^N_{\alpha}(\mathbf{M} ).
\end{align}
\end{prop}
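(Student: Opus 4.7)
The plan is to pair the Schur--Weyl expansion of Theorem~\ref{th:Schur-Weyl} with the trace identity recalled just before the proposition, and then simplify the resulting exponent of $N$.

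First I would exploit the fact that the quantity $\tr_\sigma(\mathbf{M})$ is a product of traces of products of the $M_i$, hence is invariant under simultaneous conjugation by any unitary $U$: one has $\tr_\sigma(U\mathbf{M}U^*)=\tr_\sigma(\mathbf{M})$. In particular, averaging over a Haar-distributed $U$ does not change the value, so
\[
\tr_\sigma(\mathbf{M})=\EE_U\!\left[\tr_\sigma(UM_1U^*,\dots,UM_nU^*)\right].
\]
Using the lemma $\tr_\sigma(M_1,\dots,M_n)=N^{-\#\sigma}\,\Tr_{n,N}(M_1\ts\cdots\ts M_n\circ\sigma^t)$ and linearity of the trace, this becomes
\[
\tr_\sigma(\mathbf{M})=N^{-\#\sigma}\,\Tr_{n,N}\!\left(\EE_U\!\left[(UM_1U^*)\ts\cdots\ts(UM_nU^*)\right]\circ\sigma^t\right).
\]

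Next I would plug in the Schur--Weyl decomposition~\eqref{eq:Schur-Weyl}, which rewrites the expectation as $\sum_{\alpha\in S_n}N^{-|\alpha|}\kappa_\alpha^N(\mathbf{M})\,\alpha$. This yields
\[
\tr_\sigma(\mathbf{M})=\sum_{\alpha\in S_n}N^{-\#\sigma-|\alpha|}\kappa_\alpha^N(\mathbf{M})\,\Tr_{n,N}(\alpha\circ\sigma^t).
\]
The remaining point is to evaluate $\Tr_{n,N}(\alpha\circ\sigma^t)$. Since $\sigma$ acts on $(\C^N)^{\ts n}$ as a permutation matrix in the canonical basis, its adjoint $\sigma^t$ coincides with the endomorphism $\sigma^{-1}$. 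Therefore $\alpha\circ\sigma^t=\alpha\sigma^{-1}$ as permutations, and the trace of a permutation endomorphism equals $N$ to the power of its number of cycles: $\Tr_{n,N}(\alpha\sigma^{-1})=N^{\#(\alpha\sigma^{-1})}$.

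Finally I would collect exponents of $N$: the power becomes
\[
-\#\sigma-|\alpha|+\#(\alpha\sigma^{-1}) \;=\; |\sigma|-|\alpha|-|\alpha\sigma^{-1}|,
\]
using $|\tau|=n-\#\tau$. Since $\alpha\sigma^{-1}$ and $\alpha^{-1}\sigma$ are conjugate (via $\alpha$), they share the same cycle type, giving $|\alpha\sigma^{-1}|=|\alpha^{-1}\sigma|$. Recognising this as $-\df(\alpha,\sigma)$ from~\eqref{eq:Defect SymGp} closes the argument. The computation is essentially mechanical; the only subtle point is the identification $\sigma^t=\sigma^{-1}$ as endomorphisms of $(\C^N)^{\ts n}$, which I expect to be the main place where care is required.
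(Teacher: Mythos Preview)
Your proof is correct and follows essentially the same route as the paper: both start from the conjugation invariance $\tr_\sigma(\mathbf{M})=\EE_U[\tr_\sigma(U\mathbf{M}U^*)]$, insert the Schur--Weyl expansion~\eqref{eq:Schur-Weyl}, evaluate $\Tr_{n,N}(\alpha\circ\sigma^t)$ using $\sigma^t=\sigma^{-1}$, and reduce the exponent of $N$ to $-\df(\alpha,\sigma)$. The only cosmetic difference is that the paper reaches $\#(\alpha^{-1}\sigma)$ via the transpose-invariance of the trace, while you pass through $\#(\alpha\sigma^{-1})$ and then invoke conjugacy to identify $|\alpha\sigma^{-1}|=|\alpha^{-1}\sigma|$; both are valid.
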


\begin{proof}
Let $U$ be a Haar distributed random matrix. Using the cyclic property of the trace, the definition of $\tr_\sigma$, and the definitions of the matricial cumulants, we get: 
\begin{align*}
\tr_\sigma(\mathbf{M}) = \mathbb{E}[tr_\sigma(U\mathbf{M} U^*)] &=N^{-\#\sigma} Tr_{n,N}( \mathbb{E}[ UM_1U^*\ts UM_2U^*\ts \ldots UM_nU^*] \circ \sigma^t)\\
&= \sum_{\alpha\in S_n} N^{-\#\sigma -|\alpha|} \kappa^N_\alpha(\mathbf{M}) Tr_{n,N}(\alpha \circ \sigma^t).
\end{align*}
For any permutation $\beta$, the endomorphism $\beta^t$ corresponds to the action of the permutation $\beta^{-1}$ on $(\mathbb{C}^N)^{\otimes n}$ and $Tr_{n,N}(\beta)=N^{\#\beta}$. Hence using also the cyclic property of the trace and its invariance by transposition, $Tr_{n,N}(\alpha \circ \sigma^t) = N^{\# (\alpha^{-1} \sigma)}$, which yields: 
$$\tr_\sigma(\mathbf{M}) = \sum_{\alpha\in S_n} N^{-\#\sigma -|\alpha| + \#(\alpha^{-1} \sigma)} \kappa^N_\alpha(\mathbf{M}).$$
Using the definition of the defect $\df(\alpha,\sigma)$ (Equation \ref{eq:Defect SymGp}), we can conclude.\end{proof}

\begin{rem}\label{sketch-proof}
One can now provide a sketch of proof of Theorem \ref{Mainth}, when $\mathbf{A}_N$ and $\mathbf{B}_N$ are both composed of one matrix, namely $A_N$ and $B_N$. Again, the general case can be easily deduced using the same arguments.  

\noindent \em{1}- The defect $\df(\alpha,\sigma)\in 2\mathbb{N}$: Equation (\ref{eq:moment_cumulant}) yields that $$\tr_\sigma(\mathbf{M})=  \sum_{\alpha,\df(\alpha,\sigma)=0} \kappa^N_{\alpha}(\mathbf{M}) + O(N^{-2}). 
$$
\noindent 2- It implies that, up to order $O(N^{-2})$, the matricial cumulants are equal to the {\em free cumulants} $\kappa_\sigma (\mathbf{M})$ defined by induction by the relation 
 $$\tr_\sigma(\mathbf{M})=  \sum_{\alpha,\df(\alpha,\sigma)=0} \kappa_{\alpha}(\mathbf{M}).  
$$
\noindent 3- From Equation (\ref{eq:moment_cumulant}), up to order $O(N^{-2})$, the expectation of the mixed moments of $C_1,\ldots,C_n \in \{A_N,U_NB_NU_N^*\}$ are: 
\begin{align*}
\mathbb{E}\left[\tr_\sigma (C_1,\ldots,C_n)\right] &=  \sum_{\alpha,\df(\alpha,\sigma) = 0} \mathbb{E}\left[ \kappa^N_{\alpha}(C_1,\ldots,C_n)\right] . 
\end{align*}
\noindent 4- Assume for sake of simplicity, that $C_1,\ldots,C_m = A_N$ and $C_{m+1},\ldots,C_{n} = U_NB_NU_N^*$, then from Theorem \ref{th:mixed_vanish_cumulants}, 
\begin{align*}
\mathbb{E}\left[\tr_\sigma (C_1,\ldots,C_n)\right] &=  \sum_{{\alpha_1\in S_m, \alpha_2 \in S_{n-m},}\atop{\df(\alpha_1\otimes \alpha_2,\sigma) = 0}} \kappa^N_{\alpha_1} (A_N) \kappa^N_{\alpha_2} (B_N) + O(N^{-2})
\end{align*}
and from the second point, 
\begin{align*}
\mathbb{E}\left[\tr_\sigma (C_1,\ldots,C_n)\right] &=  \sum_{{\alpha_1\in S_m, \alpha_2 \in S_{n-m},}\atop{\df(\alpha_1\otimes \alpha_2,\sigma) = 0}} \kappa_{\alpha_1} (A_N) \kappa_{\alpha_2} (B_N) + O(N^{-2})
\end{align*}
\noindent 5- The first order in the r.h.s. is the expectation of the same mixed moment of $A_N$ and $B_N$, if they were free. This allows us to conclude.
\end{rem}

We now provide the details of the proof and, doing so, we go beyond the $N^{-2}$ order, define some higher order matricial cumulants for fixed $N$, and provide an expansion at any order of the mixed moments of $\mathbf{A}_N$ and $U_N\mathbf{B}_NU_N^*$ (see Theorem \ref{Mainth-general} which implies Theorem \ref{Mainth}). 

\begin{corollary}  \label{Coro:Moment Cumulant Matrix} There exist some multilinear maps $(\kappa_{\sigma}^{(2g)}(\cdot ))_{\sigma\in S_n, g\ge 0}$, which are linear combinations with  integer  coefficients \footnote{ the  sign of the $\tr_\alpha$ coefficient in $\kappa_{\sigma,g}$ is  $(-1)^{|\sigma|+|\alpha|}.$ }  in the set of observables $\{ \tr_{\sigma}( \cdot),$ $\sigma\in S_n \}$  such that  for $N$ large enough, for any n-tuple of matrices $\mathbf{M} = (M_1,\ldots,M_n) \in M_N(\mathbb{C})$, 
\begin{align*}
\kappa^N_{\sigma}(\mathbf{M})= \sum_{g\ge 0} N^{-2 g} \kappa_{\sigma}^{(2g)}(\mathbf{M}). 
\end{align*}
\end{corollary}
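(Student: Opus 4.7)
The plan is to invert the moment-cumulant relation of Proposition~\ref{Prop:Matrix Moment Cum} as a formal power series in $N^{-2}$. Let $A^N\in M_{S_n\times S_n}(\mathbb{Z}[N^{-1}])$ be the matrix $A^N_{\sigma,\alpha}:=N^{-\df(\alpha,\sigma)}$, so that Proposition~\ref{Prop:Matrix Moment Cum} reads $\tr_\sigma(\mathbf{M})=\sum_\alpha A^N_{\sigma,\alpha}\,\kappa^N_\alpha(\mathbf{M})$. Since defects are even and uniformly bounded on $S_n\times S_n$, we decompose
$$A^N=\sum_{g\ge 0}N^{-2g}A^{(2g)},\qquad A^{(2g)}_{\sigma,\alpha}:=\mathbf{1}_{\df(\alpha,\sigma)=2g},$$
a finite sum with $0$-$1$ matrix coefficients in $\mathbb{Z}$. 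The corollary amounts to showing that $A^N$ admits an inverse of the form $(A^N)^{-1}=\sum_{g\ge 0}N^{-2g}B^{(2g)}$ in $M_{S_n\times S_n}(\mathbb{Z}[[N^{-2}]])$, after which one sets $\kappa^{(2g)}_\sigma(\mathbf{M}):=\sum_{\alpha}B^{(2g)}_{\sigma,\alpha}\,\tr_\alpha(\mathbf{M})$.

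The algebraic key is that $A^{(0)}$ is invertible over $\mathbb{Z}$: the entry $A^{(0)}_{\sigma,\alpha}=1$ iff $\alpha\prec\sigma$, which forces $|\alpha|\le|\sigma|$ with equality only if $\alpha=\sigma$; so ordering $S_n$ by increasing $|\cdot|$ makes $A^{(0)}$ lower triangular with ones on the diagonal, and its inverse $B^{(0)}$ lies in $M_{S_n\times S_n}(\mathbb{Z})$. Classically $B^{(0)}$ is the M\"obius matrix of the poset $(S_n,\prec)$ and coincides with $\mathrm{Moeb}(\alpha^{-1}\sigma)\mathbf{1}_{\alpha\prec\sigma}$ in the notation of~\eqref{eq:DefFreeCum}; in particular each nonzero entry has sign $(-1)^{|\sigma|+|\alpha|}$. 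Factoring $A^N=A^{(0)}(I+N^{-2}C^N)$ with $C^N:=N^{2}B^{(0)}(A^N-A^{(0)})=\sum_{g\ge 0}N^{-2g}B^{(0)}A^{(2g+2)}$ a polynomial in $N^{-2}$ with integer-matrix coefficients, the Neumann series
$$(A^N)^{-1}=\Bigl(\sum_{k\ge 0}(-N^{-2}C^N)^k\Bigr)B^{(0)}$$
converges for $N$ larger than a constant depending only on $n$, since the entries of $C^N$ are polynomials in $N^{-2}$ of bounded degree hence uniformly bounded in $N$. Regrouping by powers of $N^{-2}$ yields $(A^N)^{-1}=\sum_{g\ge 0}N^{-2g}B^{(2g)}$ where each $B^{(2g)}$ is a finite $\mathbb{Z}$-linear combination of products of the integer matrices $B^{(0)},A^{(2)},A^{(4)},\ldots$, hence itself integer-valued.

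The resulting $\kappa^{(2g)}_\sigma$ are then integer linear combinations of the observables $\tr_\alpha$ and satisfy $\kappa^N_\sigma(\mathbf{M})=\sum_{g\ge 0}N^{-2g}\kappa^{(2g)}_\sigma(\mathbf{M})$ for $N$ large enough; they are uniquely determined since $A^N$ is invertible in the formal power series ring $M_{S_n\times S_n}(\mathbb{Q}[[N^{-2}]])$ by triangularity of its constant term. The sign statement of the footnote can be obtained by induction on $g$ from the recursion $B^{(2g)}=-B^{(0)}\sum_{h=1}^{g}A^{(2h)}B^{(2(g-h))}$, combined with the parity identity $|\gamma^{-1}\beta|\equiv|\gamma|+|\beta|\pmod 2$ valid on $S_n$. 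The only genuinely new input beyond Proposition~\ref{Prop:Matrix Moment Cum} is the observation that $A^{(0)}$ is triangular; once this is noted, inversion and the integrality claim reduce to routine matrix algebra, with the sign bookkeeping being the one step that requires a little care.
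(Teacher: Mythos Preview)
Your proof of the main statement is correct and essentially identical to the paper's: both invert the moment--cumulant matrix $G^N=\sum_{k}N^{-2k}G_{2k}$ (your $A^N$) as a formal power series in $N^{-2}$, the key observation in both being that $G_0=A^{(0)}$ is unitriangular (the paper phrases this as ``identity plus nilpotent''), hence invertible over $\mathbb{Z}$; your Neumann series and the paper's direct recursion produce the same formula $B^{(2g)}=-B^{(0)}\sum_{h=1}^g A^{(2h)}B^{(2(g-h))}$.

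One caveat on the footnote sign claim: your inductive sketch does not go through as stated. The parity identity $|\gamma^{-1}\beta|\equiv|\gamma|+|\beta|\pmod 2$ does not force $|\beta|+|\gamma|$ to have fixed parity on the support of $A^{(2h)}$, so individual summands in the recursion can carry opposite signs (e.g.\ in $S_3$ with $h=1$: $\beta=1,\gamma=(12)$ gives $|\beta|+|\gamma|$ odd, while $\beta=(12),\gamma=(13)$ gives it even, both with $\df(\gamma,\beta)=2$). The paper does not prove this footnote either; a clean route uses the later identity $\kappa_\sigma^{(2g)}(\mathbf{M})=\M^{(g-\df(\sigma\beta^{-1},\sigma)/2)}(\sigma\beta^{-1})\,\tr_\beta(\mathbf{M})$ for the $\tr_\beta$ coefficient, combined with the known sign pattern $(-1)^{|\alpha|}\M^{(h)}(\alpha)\ge 0$ of the Weingarten expansion.
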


The new higher order matricial cumulants $\kappa_{\sigma}^{(2g)}(\mathbf{M})$ defined above should not be mistaken for the higher order matrix cumulants $k^{(g)}_\sigma((\mathbf{M}_N)_N)$ defined in \cite{gabriel2015combinatorial2}. Indeed, the latter were only defined for sequence of matrices $(\mathbf{M}_N)_N$ whose moments have an expansion in powers of $N^{-1}$.

\begin{proof}
For any $2k\in \mathbb{N}$, we denote by $G_{2k}$  the matrix indexed by $S_n \times S_n$ such that $G_{2k}(\sigma,\alpha) = \delta_{\df(\alpha, \sigma)=2k}$. From Equation (\ref{eq:moment_cumulant}), it remains to prove that the inverse of the matrix $\sum_{k \geq 0}\frac{1}{N^{2k}} G_{2k}$ can be expanded as $\sum_g \frac{1}{N^{2g}}C_{2g}$ where $C_{2g}$ is an integer valued matrix for any $g\geq 0$. Indeed, this would imply the desired result with 
\begin{align}
\label{eq:kappa_2g}
\kappa^{(2g)}_\sigma (M)= \sum_{\alpha\in S_n} C_{2g}(\sigma,\alpha) \tr_{\alpha} (\mathbf{M}).
\end{align}
Using the equation 
$\left(\sum_{k \geq n}\frac{1}{N^{2k}} G_{2k}\right)\left(\sum_g \frac{1}{N^{2g}}C_{2g}\right) = \mathrm{Id}_{S_n},$ and collecting the coefficients in $N^{-2g}$, the matrices $C_{2g}$ can be defined by induction since: 
\begin{align}
\label{eq:induction_kappa_2g}
G_{0} C_{0}  = \mathrm{Id}_{S_n}, \quad \sum_{i=0}^{g} G_{2i} C_{2(g-i)} = 0 \end{align}
for any $g\geq 1$. In particular, $C_{0} = G_{0}^{-1}$ and $C_{2g} = - \sum_{i=1}^{g} C_{0} G_{2i} C_{2(g-i)}$ for $g\geq 1$. 

Since $G_{0}$ can be written as the sum of the identity matrix and an integer-valued nilpotent matrix, its inverse $C_0$, which is given by the finite Taylor expansion of the inversion near the identity is also integer-valued. By induction, this implies that all the matrices $C_{2g}$ are integer-valued. Besides, the defect is bounded in $S_n$, hence the matrix $G_{2k}$ vanishes for large $k$. In particular, $\sup_k{ ||G_{2k}||} = c_G < \infty$. This allows to obtain a bound of the form 
\begin{equation}
|| C_{2g} || \leq 2^{g} c_{G}^g || G_0^{-1} ||^{g+1}\label{eq:Bound Moment to Cumulant  2g transition}
\end{equation}
If  $N$ is large enough, the sum $\sum_g \frac{1}{N^{2g}}C_{2g}$ is well-defined. This allows us to conclude. 
\end{proof}

We shall simply write $\kappa_{\sigma}(\mathbf{M})$ for $\kappa_{\sigma}^{(0)}(\mathbf{M})$. From the previous proof, $(\kappa_{\sigma}(\mathbf{M}))_\sigma$ are the free cumulants of $\mathbf{M}$ as defined in the second step of Remark \ref{sketch-proof}. The higher-order cumulants can be computed also by induction. 
\begin{lemme}\label{lem:Higher order cum}  For all $\sigma\in S_n$, for any n-tuple of matrices $\mathbf{M} = (M_1,\ldots,M_n) \in M_N(\mathbb{C})$, 
$$ \tr_\sigma(\mathbf{M})=\sum_{\alpha\prec \sigma} \kappa_\alpha(\mathbf{M}),$$
whereas  for any $g\ge 1,$ 
$$\sum_{\alpha\in S_n, \df(\alpha,\sigma)\le 2g} \kappa_{\alpha}^{(2g - \df(\alpha,\sigma))}(\mathbf{M})   =0. $$
\end{lemme}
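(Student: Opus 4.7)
The plan is to substitute the $N^{-2g}$ expansion of the matricial cumulants from Corollary \ref{Coro:Moment Cumulant Matrix} into the moment-cumulant relation of Proposition \ref{Prop:Matrix Moment Cum}, and then to equate coefficients of powers of $N^{-1}$ on both sides. Concretely, for $N$ large enough we have
\[
\tr_\sigma(\mathbf{M}) \;=\; \sum_{\alpha \in S_n} N^{-\df(\alpha,\sigma)} \kappa^N_\alpha(\mathbf{M}) \;=\; \sum_{\alpha \in S_n} \sum_{g \geq 0} N^{-\df(\alpha,\sigma)-2g}\, \kappa^{(2g)}_\alpha(\mathbf{M}),
\]
and recalling from \eqref{eq:Even Defect SymGp} that $\df(\alpha,\sigma) \in 2\mathbb{N}$, we can reorganise the double sum by setting $2g' = \df(\alpha,\sigma) + 2g$ to obtain
\[
\tr_\sigma(\mathbf{M}) \;=\; \sum_{g'\ge 0} N^{-2g'} \left(\sum_{\alpha:\ \df(\alpha,\sigma)\le 2g'} \kappa^{(2g' - \df(\alpha,\sigma))}_\alpha(\mathbf{M})\right).
\]

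Next I would justify that this identity, viewed as an identity of functions of $N$ valid for all sufficiently large $N$, forces each coefficient of $N^{-2g'}$ on the right to match the corresponding coefficient on the left. The bound \eqref{eq:Bound Moment to Cumulant  2g transition} on $\|C_{2g}\|$ shows that the series in $g'$ converges absolutely once $N$ is large, so the right-hand side is a genuine convergent power series in $N^{-2}$; since the left-hand side is independent of $N$, uniqueness of such an expansion gives the result. This yields $\tr_\sigma(\mathbf{M}) = \sum_{\alpha \prec \sigma} \kappa_\alpha(\mathbf{M})$ when $g' = 0$ (the only pair $(\alpha,g)$ contributing has $\df(\alpha,\sigma) = 0$ and $g = 0$, i.e.\ $\alpha \prec \sigma$ and $\kappa^{(0)}_\alpha = \kappa_\alpha$), and the vanishing identity for each $g' \geq 1$.

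I do not expect any serious obstacle: the argument is essentially bookkeeping of exponents of $N$. The only subtle point is the justification of equating coefficients, which relies on the convergence of the $N^{-2g}$ expansion of $\kappa^N_\sigma$ guaranteed by Corollary \ref{Coro:Moment Cumulant Matrix}. An alternative, equally painless route would be to proceed by induction on $g'$, using the $g'=0$ case (which is already implicit in step 2 of Remark \ref{sketch-proof}) as the base and the inductive relation \eqref{eq:induction_kappa_2g} between the matrices $(C_{2g})_{g\geq 0}$ to pass from $g'$ to $g'+1$; in view of the definition \eqref{eq:kappa_2g} of $\kappa^{(2g)}_\sigma$, this inductive relation is exactly the claimed identity contracted with $\tr_\alpha(\mathbf{M})$.
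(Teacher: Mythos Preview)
Your proposal is correct, and in fact your \emph{alternative} route is exactly the paper's proof: the paper applies the defining relation \eqref{eq:induction_kappa_2g}, namely $\sum_{i=0}^{g} G_{2i} C_{2(g-i)} = \delta_{g=0}\,\mathrm{Id}_{S_n}$, to the vector $\tr_{\cdot}(\mathbf{M})$ and uses \eqref{eq:kappa_2g} to identify $C_{2(g-i)}\tr_{\cdot}(\mathbf{M})$ with $\kappa^{(2(g-i))}_{\cdot}(\mathbf{M})$. Your primary approach (substitute the expansion into Proposition~\ref{Prop:Matrix Moment Cum} and equate powers of $N^{-2}$) is a harmless repackaging of the same computation; the one point that could be stated more carefully is that ``viewing the identity as a function of $N$'' really means working at the level of the universal matrices $G_{2k},C_{2g}$ (which do not depend on $\mathbf{M}$ or on the size $N$), since for a fixed tuple $\mathbf{M}$ the dimension $N$ is fixed and cannot be varied independently.
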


\begin{proof} 
Consider the vectors $\tr_{\cdot}(\mathbf{M}) = (\tr_{\sigma}(\mathbf{M}))_{\sigma \in S_n}$ and $\kappa_{\cdot}^{(2g)}(\mathbf{M}) = (\kappa_{\sigma}^{(2g)}(\mathbf{M}))_{\sigma \in S_n}$. 
From Equations (\ref{eq:induction_kappa_2g}) and (\ref{eq:kappa_2g}), 
$$ \delta_{g=0} \tr_{\cdot}(\mathbf{M}) = \sum_{i=0}^{g} G_{2i} C_{2(g-i)} \tr_{\cdot}(\mathbf{M})  = \sum_{i=0}^{g} G_{2i} \kappa_{\sigma}^{(2(g-i))}(\mathbf{M}). $$
This yields, the two desired equations. 
\end{proof}

These relations can be used in order to compute the higher order matricial cumulants. In particular, one can provide formulae relating these matricial cumulants, the M\"obius function and the Weingarten function (see Section \ref{sec:computation} for the higher order). Using the bijection between the poset $NC(n)$ and  $\{\alpha\in S_n: \alpha\prec (1\ldots n)\}, $ it is classical to show the following expression for the $0$-order matricial cumulants.
\begin{lemme} \label{lem:free_cumulants} It follows from the bijection between  the posets $NC(n)$ and $ \{ \alpha\in S_n: \alpha\prec (1\ldots n)\}$  that 
$$\kappa_\sigma(\mathbf{M})= \prod_{ (i_1\ldots i_k) \text{ cycle of }\sigma}\kappa_{k}(M_{i_1},\ldots,M_{i_k}),$$
where  for any matrices $C_1,\ldots, C_k\in M_N(\C),$ $\kappa_k(C_{1},\ldots,C_{k})$  denotes the free cumulants of $(C_1,\ldots,C_k)$ for $\tr_N$, that is 
$$\kappa_k(C_1,\ldots, C_k)= \sum_{\pi \in [0_k, 1_k]_{NC(k)}}  \M_{NC(n)}(\pi, 1_k) \prod_{ i_1< \ldots <i_l \text{ block  of } \pi }  \tr_N(C_{i_1}\ldots C_{i_l}). $$
\end{lemme}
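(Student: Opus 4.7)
The plan is to verify, by Möbius inversion on the poset $(S_n, \prec)$, that the proposed product formula satisfies the defining recursion $\tr_\sigma(\mathbf{M}) = \sum_{\alpha \prec \sigma} \kappa_\alpha(\mathbf{M})$ given in the first equation of Lemma~\ref{lem:Higher order cum}. This recursion uniquely determines the family $(\kappa_\alpha)_{\alpha \in S_n}$, since the incidence matrix $G_0$ is invertible (it is the sum of the identity and a nilpotent matrix, as already used in the proof of Corollary~\ref{Coro:Moment Cumulant Matrix}). Hence it is enough to substitute the candidate and recover the moments.

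First I would invoke the extension of Biane's poset isomorphism: for every $\sigma \in S_n$, the map $\alpha \mapsto \pi_\alpha$ sending a permutation to its partition of orbits restricts to a poset isomorphism
$$[1,\sigma] := \{\alpha \in S_n : \alpha \prec \sigma\} \;\cong\; \prod_{c \text{ cycle of } \sigma} NC(|c|),$$
where each cycle $c=(i_1\ldots i_k)$ of $\sigma$ contributes the non-crossing partition lattice of $\{i_1,\ldots,i_k\}$ equipped with the cyclic order induced by $c$. For $\sigma = \gamma_n$ this is the result cited in the remark at the end of Subsection~\ref{sec:Weingarten}; the general case reduces to that one applied inside each cycle, since $\alpha \prec \sigma$ forces every cycle of $\alpha$ to be contained in a cycle of $\sigma$.

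Second, I would use that $\tr_\sigma$ factorizes over the cycles of $\sigma$ by its very definition: if the cycles of $\sigma$ are $c_j = (i_{j,1}\ldots i_{j,k_j})$, then $\tr_\sigma(\mathbf{M}) = \prod_j \tr_N(M_{i_{j,1}}\cdots M_{i_{j,k_j}})$. Substituting the candidate $\kappa_\alpha(\mathbf{M}) = \prod_{d \text{ cycle of }\alpha} \kappa_{|d|}(M_d)$ into $\sum_{\alpha \prec \sigma} \kappa_\alpha(\mathbf{M})$ and applying the poset isomorphism, the sum splits as a product, over the cycles $c_j$ of $\sigma$, of expressions of the form $\sum_{\pi \in NC(k_j)} \kappa_\pi(M_{i_{j,1}},\ldots,M_{i_{j,k_j}})$. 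By the classical moment-cumulant relation for free cumulants each such factor equals $\tr_N(M_{i_{j,1}}\cdots M_{i_{j,k_j}})$, so the total product recovers $\tr_\sigma(\mathbf{M})$, as required.

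The only non-routine ingredient is the poset isomorphism above: once it is granted, the factorization of $\tr_\sigma$ over cycles together with the uniqueness of the solution to the triangular recursion does the rest. Since this isomorphism is the standard Biane bijection applied cycle-wise, and was already recalled in the paper for the full cycle, there is no serious obstacle to pass from the cyclic case to a general $\sigma$.
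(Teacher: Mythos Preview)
Your proposal is correct and follows precisely the approach the paper indicates: the paper gives no detailed proof of this lemma, merely stating that it ``is classical'' and follows from Biane's bijection between $NC(n)$ and $\{\alpha\in S_n:\alpha\prec(1\ldots n)\}$. Your argument supplies the missing details --- namely, that the defining recursion $\tr_\sigma=\sum_{\alpha\prec\sigma}\kappa_\alpha$ from Lemma~\ref{lem:Higher order cum} has a unique solution (by invertibility of $G_0$), that the interval $[1,\sigma]$ factors as a product of intervals over the cycles of $\sigma$ (each isomorphic to a non-crossing partition lattice via Biane), and that the candidate product formula then satisfies the recursion by the standard moment--cumulant relation applied cycle by cycle.
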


\subsubsection{Mixed moments, matricial cumulants and proof of Theorem \ref{Mainth}}

Using matricial cumulants, we can compute a $N^{-2}$ expansion of the mixed moments in $\mathbf{A}_N$ and $\mathbf{B}_N$ (Theorem \ref{Mainth-general}) which implies Theorem \ref{Mainth}. When $(\mathbf{C}_N)_N$ is a sequence of $k$-tuples of matrices of size $N$, let us set for any $n\ge 1,$  
\[\|\mathbf{C}\|_{*,n}=\sup_{N,\sigma\in S_n,1\le i_1,\ldots i_n\le k}|\tr_{\sigma}(C_{i_1}\ldots C_{i_n})|. \]

\begin{theorem}\label{Mainth-general}
Let $(\mathbf{A}_N)_{N\geq 1}$ and $(\mathbf{B}_N)_{N\geq 1}$ be two deterministic sequences of $k$-tuples and $\ell$-tuples of $N\times N$ matrices, each of them being bounded in n.c. distribution.  Let $(U_N)_{N\geq 1}$ be a sequence of Haar distributed unitary  random matrices of size $N$. For any non-commutative polynomial $P$, 
$$\EE[\tau_{\mathbf{A}_N, U_N\mathbf{B}_NU_N^*}(P)]=\tau_{\mathbf{A}_N}\star \tau_{\mathbf{B}_N}(P) + \sum_{g=1}^{k} N^{-2g } \tau_{\mathbf{A}_N}\star_g\tau_{\mathbf{B}_N}(P) + N^{-2k-2}R_{k+1}(\mathbf{A},\mathbf{B}), $$ 
 where $|R_{k+1}(\mathbf{A},\mathbf{B})|= O(\sum_{a+b\le \mathrm{deg}(P)}\|A\|_{*,a}\|B\|_{*,b})$ and $\tau_{\mathbf{A}_N}\star_g \tau_{\mathbf{B}_N}(P)$ can be computed in the two following ways. 
 
 If  $P=A^{(1)}B^{(1)}\cdots A^{(n)}B^{(n)},$ with $ A^{(k)}= P_k(\mathbf{A}_N)$  and  $B^{(k)}= P_k(\mathbf{B}_N), \text{ for } 1\le k\le n,$ then $\mu_{\mathbf{A}_N}\star_g\mu_{\mathbf{B}_N}(P)$ is equal to: 
 \begin{align}
\sum_{\alpha\in S_n: \df(\alpha,(1\ldots n))\le2g} \kappa^{(2g-\df(\alpha,(1\ldots n)))}_\alpha(A^{(k)},1\le k\le n)  \tr_{\alpha^{-1} (1\ldots n)}(B^{(k)},1\le k\le n) 
 \end{align}
 or equivalently to 
\begin{align}  
\sum_{ \substack{a,b\ge 0,\alpha,\beta\in S_n \\ a+b+\df(\alpha, \alpha\circ \beta)+\df(\alpha\circ\beta,(1,\ldots,n))= 2g   } } \kappa_\alpha^{(a)}(A^{(k)},1\le k\le n)\kappa_\beta^{(b)}(B^{(k)},1\le k\le n).
 \end{align}

 
\end{theorem}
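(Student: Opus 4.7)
The strategy is to combine the Schur--Weyl expansion of the Haar average (Theorem~\ref{th:Schur-Weyl}) with the moment--cumulant relation (Proposition~\ref{Prop:Matrix Moment Cum}) and the $N^{-2}$ expansion of matricial cumulants (Corollary~\ref{Coro:Moment Cumulant Matrix}). By multilinearity and cyclicity of $\tr_N$, it is enough to treat $P=P_1(X)Q_1(Y)\cdots P_n(X)Q_n(Y)$; setting $A^{(k)}=P_k(\mathbf{A}_N)$ and $B^{(k)}=Q_k(\mathbf{B}_N)$, the required control of $R_{k+1}$ will ultimately follow because, by the last line of Corollary~\ref{Coro:Moment Cumulant Matrix}, each $\kappa^{(2g)}_\sigma$ is an integer combination of the $\tr_\tau$, so its absolute value is bounded by a universal constant times $\|\mathbf{A}\|_{*,\cdot}$ or $\|\mathbf{B}\|_{*,\cdot}$.

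Using the unitary invariance of Haar measure to conjugate by $U_N^*$, together with the tensor identity
\[
\tr_N(M_1N_1\cdots M_nN_n)=N^{-1}\,\Tr_{n,N}\!\bigl((M_1\otimes\cdots\otimes M_n)(N_1\otimes\cdots\otimes N_n)(1\cdots n)^t\bigr),
\]
I apply Theorem~\ref{th:Schur-Weyl} to $(U_N^*A^{(1)}U_N,\ldots,U_N^*A^{(n)}U_N)$ (whose Haar expansion is the same as that of $U_N\cdot U_N^*$-conjugates) and then use the cyclicity of $\Tr_{n,N}$ to move the resulting permutation past the $B$-tensor, yielding $N^{\#(\alpha^{-1}(1\cdots n))}\tr_{\alpha^{-1}(1\cdots n)}(\mathbf{B})$. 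The defect identity $\df(\alpha,(1\cdots n))=|\alpha|+|\alpha^{-1}(1\cdots n)|-(n-1)$ then rewrites the exponent of $N$ as $-\df(\alpha,(1\cdots n))$, giving
\[
\EE\bigl[\tau_{\mathbf{A}_N,U_N\mathbf{B}_NU_N^*}(P)\bigr]=\sum_{\alpha\in S_n}N^{-\df(\alpha,(1\cdots n))}\,\kappa^N_{\alpha}(\mathbf{A})\,\tr_{\alpha^{-1}(1\cdots n)}(\mathbf{B}).
\]
Substituting $\kappa^N_\alpha(\mathbf{A})=\sum_{a\ge0}N^{-2a}\kappa^{(2a)}_\alpha(\mathbf{A})$ from Corollary~\ref{Coro:Moment Cumulant Matrix} and collecting the coefficient of $N^{-2g}$ gives the first advertised formula, only integer powers of $N^{-2}$ surviving because $\df(\alpha,(1\cdots n))\in 2\N$ by~\eqref{eq:Even Defect SymGp}. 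Further expanding $\tr_{\alpha^{-1}(1\cdots n)}(\mathbf{B})$ by Proposition~\ref{Prop:Matrix Moment Cum} and Corollary~\ref{Coro:Moment Cumulant Matrix}, and using the additivity
\[
\df(\beta,\alpha^{-1}(1\cdots n))=\df(\alpha,\alpha\beta)+\df(\alpha\beta,(1\cdots n))-\df(\alpha,(1\cdots n)),
\]
yields the second formula. The $g=0$ term is identified with $\tau_{\mathbf{A}_N}\star\tau_{\mathbf{B}_N}(P)$ via the bijection between $[1,(1\cdots n)]$ in Cayley order and $NC(n)$ recalled at the end of Subsection~\ref{sec:Weingarten}, together with Lemma~\ref{lem:free_cumulants} identifying $\kappa^{(0)}_\alpha(\mathbf{A})$ with the product of classical free cumulants over the cycles of $\alpha$.

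The error estimate is obtained by truncating both expansions at order $2k$: for $N\ge N_0(n)$, the norm bound~\eqref{eq:Bound Moment to Cumulant  2g transition} on the transition matrices $C_{2g}$ controls $\sum_{g\ge k+1}N^{-2g}\kappa^{(2g)}_\alpha(\mathbf{A})$ by $N^{-2k-2}$ times a constant depending only on $n$ and on the $\|\mathbf{A}\|_{*,\cdot}$, and the finite outer sum over $\alpha\in S_n$ combined with the trace bound on $\tr_{\alpha^{-1}(1\cdots n)}(\mathbf{B})$ turns this into the stated bound on $R_{k+1}$. The main technical obstacle is precisely the bookkeeping in this last step: one must keep the geometric tails of the two nested $N^{-2}$-expansions under simultaneous control, verify that only finitely many terms contribute at each order $N^{-2g}$ for $g\le k$ (using that the defect is bounded on $S_n$), and then check combinatorially that the two parametrisations $(\alpha,a)$ and $(\alpha,\beta,a,b)$ produce the same quantity at every order, which reduces to the defect identity above and to the fact that $\{\alpha\prec\sigma\}$ is exactly the support of the $(\alpha,\sigma)$-term in the moment--cumulant relation.
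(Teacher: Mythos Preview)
Your proposal is correct and follows essentially the same route as the paper's proof: Schur--Weyl on the $A$-side, the defect identity to rewrite the exponent of $N$, the expansion of $\kappa^N_\alpha$ from Corollary~\ref{Coro:Moment Cumulant Matrix} to get the first formula, and then a further expansion on the $B$-side for the second formula, with the $g=0$ term identified via Lemma~\ref{lem:free_cumulants}. The only cosmetic difference is that the paper introduces an auxiliary independent Haar matrix $V_N$ so that both tensors $\EE[(V_NA_NV_N^*)^{\otimes n}]$ and $\EE[(U_NB_NU_N^*)^{\otimes n}]$ can be expanded directly by Schur--Weyl, whereas you shift the single $U_N$ to the $A$-side by invariance and then invoke Proposition~\ref{Prop:Matrix Moment Cum} for the $B$-trace; since that proposition is itself derived by inserting a Haar conjugation, the two arguments are equivalent.
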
 

\begin{proof}
We can assume that $\mathbf{A}_N$ and $\mathbf{B}_N$ are both composed of one matrix, namely $A_N$ and $B_N$, and we only need to prove the expansion of $\EE[\tau_{A_N, U_N B_N U_N^*}(P)]$ and the value of $\tau_{A_N}\star_g \tau_{B_N}(P)$ for $P=(AB)^n$. The general case can be easily deduced using the same arguments.  

Note that $\mathbb{E}[\tau_{A_N,U_NB_NU_N^*}(P)] = \mathbb{E}\left[\tr\left( ((V_NA_NV_N^*) (U_N B_N U_N^*))^n \right)\right]$ where $V_N$ is an independent Haar distributed matrix. Hence 
\begin{align*}
\mathbb{E}[\tau_{A_N,U_NB_NU_N^*}(P)] = 
\frac{1}{N} \Tr_{n,N}\left[ \mathbb{E}\left[(V_N A_N V_N^*)^{\otimes n}\right] \circ  \mathbb{E}\left[ (U_N B_N U_N^*)^{\otimes n}\right] \circ (1,\ldots,n)^t \right] 
\end{align*}
Recall the definition of the matricial cumulants (Theorem \ref{th:Schur-Weyl}) and their expansion (Corollary \ref{Coro:Moment Cumulant Matrix}) in powers  of $N^{-1}$. Recall also that for any permutation $\beta$, the endomorphism $\beta^t$ corresponds to the action of the permutation $\beta^{-1}$ on $(\mathbb{C}^N)^{\otimes n}$ and $Tr_{n,N}(\beta)=N^{\#\beta}$. Then, if one writes in terms of cumulants:  
\begin{enumerate}
\item the tensor $ \mathbb{E}\left[(V_N A_N V_N^*)^{\otimes n}\right]$, one gets that $\mathbb{E}[\tau_{A_N,U_NB_NU_N^*}(P)] $ is equal to: 
\begin{align}
\label{eq:expansion_mixed_moments}
\sum_{g\geq 0} N^{-2g} \sum_{\alpha\in S_n | \df(\alpha,(1,\ldots,n)) \leq 2g} \kappa^{(2g-\df(\alpha,(1,\ldots,n)) )}_{\alpha}(A_N) \tr_{\alpha^{-1} (1,\ldots,n)} (B_N),
\end{align}

\item both tensors $\mathbb{E}\left[(V_N A_N V_N^*)^{\otimes n} \right] $ and $\mathbb{E}\left[ (U_N B_N U_N^*)^{\otimes n}\right] $,  one gets that the moment $\mathbb{E}[\tau_{A_N,U_NB_NU_N^*}(P)] $ is equal to: 
\begin{align*}
\sum_{g\geq 0}N^{-2g} \sum_{ \substack{a,b\ge 0,\alpha,\beta\in S_n \\ a+b+\df(\alpha, \alpha\circ \beta)+\df(\alpha\circ\beta,(1,\ldots,n))= 2g   } } \kappa_\alpha^{(a)}(A_N)\kappa_\beta^{(b)}(B_N).
\end{align*}
\end{enumerate}

Using Corollary  \ref{Coro:Moment Cumulant Matrix} together with Equations \eqref{eq:kappa_2g} and \eqref{eq:Bound Moment to Cumulant  2g transition}  which allows us to bound the higher order cumulants,
we obtain the expansion stated in the theorem and the formulae for $\mu_{A_N}\star_g\mu_{B_N}(P)$. 
At last, note that when $g=0$, the first term in the expansion in Equation (\ref{eq:expansion_mixed_moments}) is simply 
$$\sum_{\alpha \prec (1,\ldots,n)} \kappa^{(0)}_{\alpha}(A_N) \tr_{\alpha^{-1} (1,\ldots,n)} (B_N).$$ 
Using Lemma \ref{lem:free_cumulants}, we can conclude that the first term is simply given by the free convolution $\tau_{A_N} \star \tau_{B_N} (P)$. 
\end{proof}

The bounded coefficients $\tau_{\mathbf{A}_N}\star_g\tau_{\mathbf{B}_N}(P)$ do depend on $N$. In concrete cases, we might be given sequences of matrices having  distribution with a $\tfrac{1}{N}$ expansion up to order $K\in \mathbb{N}$ in the following sense:
$$\tau_{\mathbf{A}_N}(P)=\sum_{k=0}^K\frac{1}{N^k}\tau_{\mathbf{A}}^{(k)}(P)+o\left(N^{-K}\right).$$
From such expansions, it is possible to expand each term $\tau_{\mathbf{A}_N}\star_g\tau_{\mathbf{B}_N}(P)$ as a series in $\tfrac{1}{N}$, and Theorem~\ref{Mainth-general} yields an expansion
$$\EE[\tau_{\mathbf{A}_N, U_N\mathbf{B}_NU_N^*}(P)]=\sum_{k=0}^K\frac{1}{N^k}\tau^{(k)}(P)+o\left(N^{-K}\right)$$
with coefficients $\tau^{(k)}(P)$ which do not depend on $N$.
We do not provide here the formulae for computing the coefficients $\tau^{(k)}(P)$ from the expansions of $\mathbf{A}_N$ and $\mathbf{B}_N$, as it has already been done independently in \cite{gabriel2015combinatorial2} and \cite{Borot2021}. More precisely, the existence of this expansion is ensured by~\cite[Theorem 8.5]{gabriel2015combinatorial2} and the computation is possible thanks to the notion of \emph{freeness up to order $K$ of fluctuations}. This expansion is also a consequence of~\cite[Theorem 4.28]{Borot2021} and the computation is possible thanks to the notion of \emph{$(\tfrac{K}{2},1)$-freeness}. The difference of Theorem~\ref{Mainth-general} from \cite[Theorem 8.5]{gabriel2015combinatorial2} and \cite[Theorem 4.28]{Borot2021} is that we only assume the boundedness in n.c. distribution of $(\mathbf{A}_N)_{N\geq 1}$ and $(\mathbf{B}_N)_{N\geq 1}$ instead of assuming the expansion of $\tau_{\mathbf{A}_N}$ and $\tau_{\mathbf{B}_N}$ in $\tfrac{1}{N}$.

\begin{rem}
\label{rem:computation_mixed_moments}
Note that from the computations in the previous proof, the mixed moment $\mathbb{E}[\tau_{A_N,U_NB_NU_N^*}((A_NB_N)^n)]$ is equal to $\sum_{\alpha\in S_n} N^{-\df(\alpha,(1,\ldots,n))} \kappa^N_{\alpha} (A_N)  \tr_{\alpha^{-1} (1,\ldots,n)} (B_N)$ or $\sum_{\alpha,\beta\in S_n} N^{-\df(\alpha,\alpha\circ \beta) - \df(\alpha\circ \beta , (1,\ldots,n))} \kappa^N_{\alpha}(A_N) \kappa^N_{\beta}(B_N)$. This shows how one can use the mixed matricial cumulants in order to compute mixed moments (see Example \ref{ex:computation_mixed_moments}). 
\end{rem}

\subsubsection{Computations of the higher order matricial cumulants}
\label{sec:computation}
We illustrate the notion of matricial cumulants by first showing how one can compute them  explicitly  when $n=2$. As a corollary of this, we obtain the higher order matricial cumulants. Then, we compute a mixed moment of $A$ and $UBU^*$ using these matricial cumulants. At last, we provide the relation between these higer order matricial cumulants and the Weingarten function.

\begin{ex} 
\label{ex:computation_mixed_moments}
Consider $A \in  M_N(\C)$  fixed. Using Proposition \ref{Prop:Matrix Moment Cum}, 
\begin{align*}
\tr_{1_2}(A,A)&=  \kappa_{1_2}^N(A,A) + N^{-2}  \kappa_{(1\,2)}^N(A,A), \\
\tr_{(1\, 2)} (A,A)&= \kappa_{1_2}^N(A,A) +  \kappa_{(1\,2)}^N(A,A). 
\end{align*}
Therefore, 
\begin{align*}\kappa_{1_{2}}^{N}(A,A)&=\frac{1}{1-N^{-2}}\tr_{N}(A)^{2}-\frac{N^{-2}}{1-N^{-2}}tr_N(A^{2})\\
 \kappa_{(1\,2)}^N(A,A)&=\frac{1}{1-N^{-2}}(\tr_N(A^2)- \tr_N(A)^2).
 \end{align*}
Using Corollary \ref{Coro:Moment Cumulant Matrix}, it follows that
$$\kappa_{1_2}^{(2g)}(A,A) = \left\{ \begin{array}{lc}  \tr_N(A)^2 & \text{ if } g=0,\\   \tr_N(A)^2-\tr_N(A^2)&\text{ if } g\ge 1,\end{array}\right.$$
whereas 
$$\kappa_{(1\,2)}^{(2g)}(A,A)= \tr_N(A^2)-\tr_N(A)^2, \hspace{0,5 cm}\forall g\ge 0.$$

Consider now $A,B\in M_N(\C)$ and $\tilde{B}=UBU^*$ where $U$ is a Haar unitary matrix. Let us compute $\EE [\tr(A\tilde{B}A\tilde{B})]$ using the matricial cumulants (see also Remark \ref{rem:computation_mixed_moments}): 
 \begin{align*}
\EE[\tr(A\tilde{B}A\tilde{B})]&= \frac{1}{N} \Tr_{2,N}\left[\left( \kappa_{1_2}^N(A,A) 1_2 +N^{-1}\kappa_{(1\, 2)}^N(A,A) (1\,2) \right)B^{\ts 2} (1,2)^t\right] \\
& =  \kappa_{1_2}^N(A,A) \tr_N(B^2) +  \kappa_{(1,2)}^N(A,A) \tr_N(B)^2. 
\end{align*}
Note that it can also be expressed as: 
 \begin{align*} \kappa_{1_2}(A,A) \tr_N(B^2) +  \kappa_{(1,2)}(A,A) \tr_N(B)^2 -\frac{N^{-2}}{1-N^{-2}}  \kappa_{(1\,2)}( A,A)\kappa_{(1\,2)}(B,B). 
 \end{align*}
 \end{ex}

In Lemma \ref{lem:free_cumulants}, we identified the $0$-order matricial cumulants as the free cumulants. This identification yields a way to compute explicitly them. All higher order cumulants $\kappa^{(g)}$ can also be expressed thanks to Lemma \ref{lem:Higher order cum}. Let us give an example with $g=1.$
\begin{lemme} For any $\sigma\in S_n,$ any n-tuple of matrices $\mathbf{M} = (M_1,\ldots,M_n) \in M_N(\mathbb{C})$, 
\begin{align*}
 \kappa^{(2)}_\sigma(\mathbf{M})&=-\sum_{ \substack{\alpha,\beta\in S_n\\ \df(\beta,\alpha)=2,  \alpha\prec \sigma}}\M(\alpha^{-1}\sigma)  \kappa_{\beta}(\mathbf{M})  \\
 &=- \sum_{ \substack{\alpha,\beta,\gamma\in S_n\\ \gamma\prec \beta, \df(\beta,\alpha)=2,  \alpha\prec \sigma}} \M(\alpha^{-1}\sigma) \M(\gamma^{-1}\beta) \tr_\gamma(\mathbf{M}).
\end{align*}
\end{lemme}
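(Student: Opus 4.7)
My approach is to read off the identity directly from Lemma~\ref{lem:Higher order cum} at $g=1$ and then apply Möbius inversion on the interval $[1,\sigma]$ of the Cayley poset of $S_n$.

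The first step is to specialize Lemma~\ref{lem:Higher order cum} to $g=1$, which gives
$$\sum_{\alpha\in S_n,\ \df(\alpha,\sigma)\le 2}\kappa_\alpha^{(2-\df(\alpha,\sigma))}(\mathbf{M})=0.$$
Because the defect is always an even nonnegative integer by~\eqref{eq:Even Defect SymGp}, the only contributing values are $\df(\alpha,\sigma)=0$ (equivalently $\alpha\prec\sigma$), giving the summand $\kappa_\alpha^{(2)}(\mathbf{M})$, and $\df(\alpha,\sigma)=2$, giving the summand $\kappa_\alpha^{(0)}(\mathbf{M})=\kappa_\alpha(\mathbf{M})$. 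Rearranging, I obtain the key intermediate identity
$$\sum_{\alpha\prec\sigma}\kappa_\alpha^{(2)}(\mathbf{M})=-\sum_{\beta\in S_n,\ \df(\beta,\sigma)=2}\kappa_\beta(\mathbf{M}).$$

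The second step is to invert this identity on the interval $[1,\sigma]$. Recall that the relation $\tr_\sigma(\mathbf{M})=\sum_{\alpha\prec\sigma}\kappa_\alpha(\mathbf{M})$ from the first part of Lemma~\ref{lem:Higher order cum} is equivalent, via Möbius inversion on $[1,\sigma]$, to the formula $\kappa_\sigma(\mathbf{M})=\sum_{\alpha\prec\sigma}\M(\alpha^{-1}\sigma)\tr_\alpha(\mathbf{M})$ used at \eqref{eq:DefFreeCum} and recalled in Lemma~\ref{lem:free_cumulants}. The same Möbius inversion, now applied to $\kappa_\sigma^{(2)}(\mathbf{M})$ (in the role of $\kappa_\sigma$) and to the function $\alpha\mapsto -\sum_{\beta:\,\df(\beta,\alpha)=2}\kappa_\beta(\mathbf{M})$ (in the role of $\tr_\alpha$), yields the first equality of the lemma:
$$\kappa_\sigma^{(2)}(\mathbf{M})=-\sum_{\substack{\alpha,\beta\in S_n\\ \alpha\prec\sigma,\ \df(\beta,\alpha)=2}}\M(\alpha^{-1}\sigma)\,\kappa_\beta(\mathbf{M}).$$

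Finally, I would substitute the expression $\kappa_\beta(\mathbf{M})=\sum_{\gamma\prec\beta}\M(\gamma^{-1}\beta)\tr_\gamma(\mathbf{M})$ from Lemma~\ref{lem:free_cumulants} into the first equality to obtain the second. The whole argument is essentially a one-line Möbius inversion once the recursion of Lemma~\ref{lem:Higher order cum} is set up; the only conceptual point that needs to be mentioned is that the Möbius inversion of~\eqref{eq:DefFreeCum}, originally stated for $\sigma\prec\gamma_n$, carries over verbatim to an arbitrary $\sigma\in S_n$, since $[1,\sigma]$ factors through the cycle decomposition of $\sigma$ into a product of intervals of the form $[1,\gamma_k]$, each isomorphic to a non-crossing partition lattice.
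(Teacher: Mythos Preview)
Your proof is correct and follows essentially the same route as the paper: specialize Lemma~\ref{lem:Higher order cum} to $g=1$, rearrange to express $\sum_{\alpha\prec\sigma}\kappa_\alpha^{(2)}(\mathbf{M})$ as $-\sum_{\beta:\df(\beta,\sigma)=2}\kappa_\beta(\mathbf{M})$, and then apply M\"obius inversion on $[1,\sigma]$ (the paper states this inversion in abstract form for $(a_\sigma)$ and $(b_\sigma)$ rather than invoking~\eqref{eq:DefFreeCum}, but the content is identical). Your closing remark justifying the M\"obius inversion on $[1,\sigma]$ for arbitrary $\sigma$ via the cycle factorization is a welcome clarification that the paper leaves implicit.
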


\begin{proof}
The function $\M$ is related to the Moebius function for the order $\prec$. More precisely, two families $(a_\sigma)_{\sigma \in S_n}$ and $(b_\sigma)_{\sigma \in S_n}$ satisfy  $
a_\sigma = \sum_{\alpha \in S_n, \alpha \prec \sigma} b_\alpha$ for any $\sigma \in S_n$, if and only if
\begin{align*}
b_\sigma = \sum_{\alpha \in S_n, \alpha \prec \sigma} \M(\alpha^{-1} \sigma) a_\alpha 
\end{align*}
holds for any $\sigma \in S_n$. From Lemma \ref{lem:Higher order cum}, we know that: 
$$
\sum_{\alpha\prec \sigma} \kappa_\alpha(\mathbf{M}) = \tr_\sigma(\mathbf{M}), \quad\quad \sum_{\alpha \prec \sigma } \kappa_{\alpha}^{(2)} (\mathbf{M}) = \zeta_\sigma,
$$
where $\zeta_\sigma= \sum_{\alpha\in S_n, \alpha\neq \sigma \atop \df(\alpha,\sigma)= 2} \kappa_{\alpha}^{}(\mathbf{M})$. This yields the two formulas for $\kappa_\sigma^{(2)}(\mathbf{M})$. 
\end{proof}

Actually, matricial cumulants relate to the Weingarten function. Recall that, in Section \ref{sec:Weingarten}, we saw that the function $\M_N(\cdot ) := N^{n+|\cdot|}\Wg_N(\cdot)$ can be expanded in power of $N^{-2}$ (see ~\cite{Collins2004}): for any $\sigma \in S_n$,   
$\M_N(\sigma)= \sum_{g\ge 0 } \M^{(g)}(\sigma)N^{-2g}.$
The matricial cumulants can then be expressed using $\M_N$ and $ \M^{(g)}$. 

\begin{lemme}For any $\sigma\in S_n,$ any n-tuple of matrices $\mathbf{M} = (M_1,\ldots,M_n) \in M_N(\mathbb{C})$, 
$$\kappa_\sigma^{N}(\mathbf{M})= \sum_{\alpha\in S_n} N^{-\df(\alpha,\sigma)}\M_N(\alpha) \tr_{\alpha^{-1}\sigma}(\mathbf{M}). $$
Hence, for any $g\geq 0$, 
\begin{equation}
\kappa_\sigma^{(g)}(\mathbf{M}) = \sum_{\alpha\in S_n: \df(\alpha,\sigma)\le 2g} \M^{(g-\df(\alpha,\sigma)/2)}(\alpha) \tr_{\alpha^{-1}\sigma}(\mathbf{M}).
\end{equation}
\end{lemme}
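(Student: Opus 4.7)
The plan is to compute the tensor $\EE_U\!\left[(UM_1U^*) \otimes \cdots \otimes (UM_nU^*)\right]$ in two different ways and match coefficients on the permutation basis. First, using the Weingarten formula \eqref{eq:WeingU} entry by entry, identifying the Kronecker factor $\prod_k \delta_{j_k,i_{\alpha(k)}}$ as the matrix of the permutation endomorphism $\alpha$, and recognizing that $\sum_a \prod_k (M_k)_{a_k,a_{\beta(k)}}$ decomposes along the cycles of $\beta$ into $\prod_{(i_1\ldots i_r)\text{ cycle of }\beta} \Tr(M_{i_1}\cdots M_{i_r}) = N^{\#\beta}\tr_\beta(\mathbf{M})$, one obtains
$$ \EE_U\!\left[(UM_1U^*)\otimes\cdots\otimes(UM_nU^*)\right] = \sum_{\alpha,\beta\in S_n} \Wg_N(\alpha^{-1}\beta)\, N^{\#\beta}\, \tr_\beta(\mathbf{M})\, \alpha. $$
On the other hand, Theorem \ref{th:Schur-Weyl} gives this same tensor as $\sum_\sigma N^{-|\sigma|}\kappa^N_\sigma(\mathbf{M})\,\sigma$. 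Since for $N\ge n$ the permutation endomorphisms on $(\C^N)^{\otimes n}$ are linearly independent, matching the coefficient of $\sigma$ yields
$$ \kappa^N_\sigma(\mathbf{M}) = \sum_{\beta\in S_n} N^{|\sigma|+\#\beta}\,\Wg_N(\sigma^{-1}\beta)\,\tr_\beta(\mathbf{M}). $$

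To reach the first form stated in the lemma, I would use $\Wg_N(\gamma) = N^{-n-|\gamma|}\M_N(\gamma)$ together with $\#\beta + |\beta| = n$ to rewrite the exponent as $|\sigma| - |\beta| - |\sigma^{-1}\beta|$. The key algebraic step is then the substitution $\alpha = \sigma\beta^{-1}$, equivalently $\beta = \alpha^{-1}\sigma$, combined with the fact that $\M_N$ is a class function on $S_n$ (a standard consequence of the character expansion of $\Wg_N$, or of the invariance of the Haar measure under conjugation). These give $\M_N(\sigma^{-1}\beta) = \M_N(\sigma^{-1}\alpha^{-1}\sigma) = \M_N(\alpha^{-1}) = \M_N(\alpha)$ and $|\beta| = |\alpha^{-1}\sigma|$, $|\sigma^{-1}\beta| = |\alpha|$, so the exponent becomes $-\bigl(|\alpha|+|\alpha^{-1}\sigma|-|\sigma|\bigr) = -\df(\alpha,\sigma)$, which proves the first identity.

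The second identity then follows formally. I would substitute the expansion $\M_N(\alpha) = \sum_{h\ge 0}\M^{(h)}(\alpha)\,N^{-2h}$ into the first identity, use the parity $\df(\alpha,\sigma)\in 2\N$ from \eqref{eq:Even Defect SymGp} to see that all appearing powers of $N$ are even, and extract the coefficient of $N^{-2g}$, comparing with $\kappa^N_\sigma(\mathbf{M}) = \sum_{g\ge 0}N^{-2g}\kappa^{(g)}_\sigma(\mathbf{M})$ from Corollary \ref{Coro:Moment Cumulant Matrix}. The only (mild) obstacle is the term-by-term identification of coefficients: the first matching uses linear independence of permutation endomorphisms for $N\ge n$ (the same hypothesis under which $\kappa^N_\sigma$ is itself defined), while the second uses uniqueness of the $N^{-2}$-expansion, which is automatic since both sides are known a priori to admit such an expansion.
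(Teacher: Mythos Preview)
Your proof is correct and follows essentially the same approach as the paper: both compute $\EE_U[(UM_1U^*)\otimes\cdots\otimes(UM_nU^*)]$ via the Weingarten formula, match against the defining expansion in Theorem~\ref{th:Schur-Weyl}, and then rewrite using $\M_N$ and a change of summation variable to produce the defect. The only cosmetic difference is that the paper invokes a tensor reformulation of \eqref{eq:WeingU} to get $\sum_{\alpha,\beta}\Wg_N(\alpha\beta)\,\Tr_{n,N}(M_1\otimes\cdots\otimes M_n\circ\beta)\,\alpha$ directly, whereas you expand entrywise; the substitution $\beta\mapsto\alpha^{-1}\sigma$ and the use of the class-function property of $\M_N$ are the same, and the extraction of the $N^{-2g}$ coefficient is identical.
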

\begin{proof}
 Equation \eqref{eq:WeingU} can be expressed as:  
 \begin{align*}
\mathbb{E}\left[ U^{\otimes n}\otimes (U^*)^{\otimes n} \ \right] = \sum_{\alpha,\beta \in S_n} \Wg_N(\alpha \beta) \tau \circ (\alpha \otimes \beta), 
\end{align*}
where $\tau(i) = n+i $ mod. $2n$. This implies that: 
\begin{align*}
\mathbb{E}\left[UM_1U^{*}\otimes \ldots \otimes UM_nU^{*} \right] = \sum_{\alpha,\beta \in S_n} \Wg_N(\alpha \beta) Tr_{n,N}(M_1\otimes \ldots \otimes M_n \circ \beta) \alpha. 
\end{align*}
Using the definition of the matricial cumulants, 
\begin{align*}
\kappa_\sigma^{N}(\mathbf{M}) &= N^{|\sigma|} \sum_{\beta \in S_n} \Wg_N(\sigma \beta) Tr_{n,N}(M_1\otimes \ldots \otimes M_n \circ \beta), \\
&= \sum_{\alpha\in S_n} N^{-\df(\alpha,\sigma)}\M_N(\alpha) \tr_{\alpha^{-1}\sigma}(\mathbf{M})
\end{align*}
which allows us to conclude for the first equation. The second equation is obtained by using the expansion of $\M_N$ and identifying the coefficients in the expansion.
\end{proof}

\section{The orthogonal case}\label{Sec:orth}
In this section, we indicate which results remain true and which ones do not when Haar distributed unitary  random matrices are replaced by Haar distributed orthogonal random matrices. Let us denote by $O(N)$ the orthogonal group of real $N\times N$ matrices $U_N$ such that $U_NU_N^t=I_N$. We consider also the subgroup $SO(N)=\{U_N\in O(N) : \det(U_N)=1\}$.

The first proof of Theorem~\ref{Mainth} can be adapted to the orthogonal case. It has been done by Collins and \'{S}niady in~\cite{Collins2004} (see also \cite[Chapter 3]{Levy2011}). The main difference is that the Weingarten function is converging up to order $O(N^{-1})$. It yields to the following asymptotic freeness.

\begin{theorem}\label{Mainth_orth}
Let $(\mathbf{A}_N)_{N\geq 1}$ and $(\mathbf{B}_N)_{N\geq 1}$ be two deterministic sequences of $k$-tuples and $\ell$-tuples of real symmetric $N\times N$ matrices.  Let $(U_N)_{N\geq 1}$ be a sequence of orthogonal random matrices of size $N$, uniformly distributed in $SO(N)$, or $O(N)$.

Then, for all $P\in \mathbb{C}\langle X_1,\ldots,X_{k},Y_1,\ldots,Y_{\ell}\rangle$,
$$\EE[\tau_{\mathbf{A}_N, U_N\mathbf{B}_NU_N^*}(P)]=\tau_{\mathbf{A}_N}\star \tau_{\mathbf{B}_N}(P)+N^{-1}R(\mathbf{A}_N,\mathbf{B}_N),$$
with $|R(\mathbf{A}_N,\mathbf{B}_N)|=O(\sum_{a+b\le \mathrm{deg}(P)}\|\mathbf{A}_N\|_{*,a}\|\mathbf{B}_N\|_{*,b})$. In other words, $\mathbb{C}\langle X_1,\ldots,X_{k}\rangle$ and $\mathbb{C}\langle Y_1,\ldots,Y_{\ell}\rangle$ are free up to order $O(N^{-1})$ with respect to $\EE[\tau_{\mathbf{A}_N, U_N\mathbf{B}_NU_N^*}]$ whenever $(\mathbf{A}_N)_{N\geq 1}$ and $(\mathbf{B}_N)_{N\geq 1}$ are  bounded in n.c. distribution. 
\end{theorem}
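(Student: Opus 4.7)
The plan is to adapt the Weingarten calculus proof of Theorem~\ref{Mainth} given in Subsection~\ref{sec:Weingarten} to the orthogonal setting. By linearity we may again restrict to monomials $P = P_1(X) Q_1(Y) \cdots P_n(X) Q_n(Y)$, and set $A^{(k)} = P_k(\mathbf{A}_N)$, $B^{(k)} = Q_k(\mathbf{B}_N)$. Expanding the trace
$$\Tr\left( U_N A^{(1)} U_N^t B^{(1)} \cdots U_N A^{(n)} U_N^t B^{(n)} \right)$$
in coordinates and applying the orthogonal Weingarten formula from~\cite{Collins2004}, one obtains an expression analogous to \eqref{eq: N Formula Mixed Moment}, but indexed by pairs $(\pi_1,\pi_2)$ of pair partitions (perfect matchings) of $\{1,\ldots,2n\}$, weighted by the orthogonal Weingarten function $\Wg^O_N(\pi_1,\pi_2)$ and products of contractions that factor into a term $\tr_{\alpha}(A^{(k)},1\le k\le n)$ and a term $\tr_{\beta}(B^{(k)},1\le k\le n)$ for suitable permutations $\alpha,\beta$ read off the matchings and the cyclic contraction of the outer trace.

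The key technical input is the asymptotic expansion of $\Wg^O_N$ in powers of $N^{-1}$, as established in~\cite[Corollary 2.8]{Collins2004} (see also~\cite[Chapter 3]{Levy2011}). The decisive difference with the unitary case is that $\Wg^O_N$ admits a full expansion in $N^{-1}$ rather than $N^{-2}$: this is because the Brauer algebra, which plays the role of $\mathbb{C}[S_n]$ in Schur--Weyl duality for the orthogonal group, contributes at every integer order of $N$. Equivalently, the natural distance on pair partitions controlling the defect takes all non-negative integer values, whereas the defect \eqref{eq:Defect SymGp} on $S_n$ is constrained to even values by \eqref{eq:Even Defect SymGp}. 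Performing the defect analysis as in Subsection~\ref{sec:Weingarten}, one isolates the zero-defect terms as the leading contribution, and all other terms contribute at most $N^{-1}$ times a polynomial expression in the $\tr_\alpha(A^{(k)})$'s and $\tr_\beta(B^{(k)})$'s; this yields the uniform bound on $R(\mathbf{A}_N,\mathbf{B}_N)$ in terms of $\|\mathbf{A}_N\|_{*,a}\|\mathbf{B}_N\|_{*,b}$.

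The final step is to identify the zero-defect contribution with $\tau_{\mathbf{A}_N}\star \tau_{\mathbf{B}_N}(P)$. In the unitary case this followed from the Biane bijection between $[1,\gamma_n]$ and $NC(n)$ together with the Kreweras complement. Here one proceeds instead by induction on $\mathrm{deg}(P_k), \mathrm{deg}(Q_k)$: reducing to the case $\tr_N(P_k(\mathbf{A}_N)) = \tr_N(Q_k(\mathbf{B}_N)) = 0$, the same combinatorial argument used at the end of Subsection~\ref{sec:Weingarten} shows that in any zero-defect term either the $A$-contraction permutation or the $B$-contraction permutation must have a fixed point, so the whole term vanishes. The main obstacle I expect is precisely this combinatorial identification: one must verify the orthogonal analog of the Biane--Kreweras bijection on the relevant sub-lattice of non-crossing pair partitions and check that no spurious contribution arises from the additional Brauer-diagrammatic terms absent in the unitary calculation, before concluding that the remainder is of size $O(N^{-1})$ with the desired uniform control.
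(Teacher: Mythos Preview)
Your proposal is correct and follows precisely the route the paper indicates: the paper does not give a detailed proof of Theorem~\ref{Mainth_orth} but simply states that the Weingarten-calculus proof of Theorem~\ref{Mainth} adapts to the orthogonal case via Collins--\'Sniady~\cite{Collins2004} (and~\cite[Chapter 3]{Levy2011}), with the sole essential change being that the orthogonal Weingarten function expands in powers of $N^{-1}$ rather than $N^{-2}$. Your sketch spells this out in more detail than the paper itself, correctly identifying the Brauer-algebra/pair-partition framework, the integer-valued defect, and the centered-reduction argument for the leading term; the ``main obstacle'' you flag is exactly what is handled in the references the paper cites, so there is no genuine gap.
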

Note that the rate of convergence is not sufficient here to conclude about infinitesimal freeness. In fact, the invariance in law by orthogonal conjugation does not imply infinitesimal freeness, as shown by Mingo in \cite{Mingo2018}. As well, the rate of convergence seems not to be sufficient to conclude about freeness of type $B$. However, note that if $\|\mathbf{A}_N\|_{*,a}\|\mathbf{B}_N\|_{*,b}$ is $O(N^{-1})$, we have $N^{-1}R(\mathbf{A}_N,\mathbf{B}_N)=O(N^{-2})$. This simple observation allows to get the asymptotic freeness of type $B$ as follows.

\begin{theorem}In Theorem~\ref{th:asymp_freeness_type_B}, the matrices $(U_N)_{N\geq 1}$ can be taken uniformly distributed in $O(N)$ or in $SO(N)$ whenever $(\mathbf{A}_N)_{N\geq 1}$ and $(\mathbf{B}_N)_{N\geq 1}$ are real symmetric matrices.
\label{th:asymp_freeness_type_B_orth}
\end{theorem}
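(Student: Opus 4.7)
The plan is to follow the proof of Theorem~\ref{th:asymp_freeness_type_B} verbatim, with two substitutions: Theorem~\ref{Mainth} is replaced by its orthogonal analogue Theorem~\ref{Mainth_orth}, and Proposition~\ref{Asymptotic_B_freeness} is replaced by the weaker Remark~\ref{Asymptotic_B_freeness_ortho}. First, Theorem~\ref{Mainth_orth} yields
$$\EE[\tau_{\mathbf{A}_N, U_N\mathbf{B}_NU_N^*}(P)] = \tau_{\mathbf{A}_N}\star\tau_{\mathbf{B}_N}(P) + O(N^{-1})$$
for every non-commutative polynomial $P$, which is enough to establish the freeness of $\mathcal{A}_1$ and $\mathcal{A}_2$ up to order $o(1)$ with respect to $\EE[\tau_{\mathbf{A}_N, U_N\mathbf{B}_NU_N^*}]$, hence the existence of $\tau$ on $\mathcal{A}$ and the first (and, together with the concentration step below, the fourth) bullet of Theorem~\ref{th:asymp_freeness_type_B} in the orthogonal setting.

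The main work is to verify the second hypothesis of Remark~\ref{Asymptotic_B_freeness_ortho}: for alternating $a_n,\ldots,a_1$, $v\in \mathcal{I}_h$ and $b_1,\ldots,b_m$,
$$\tau_t\bigl((a_n-\tau_t(a_n))\cdots(v-\tau_t(v))\cdots(b_m-\tau_t(b_m))\bigr)=o(N^{-1})$$
with $t=N^{-1}$ and $\tau_t = \EE[\tau_{\mathbf{A}_N, U_N\mathbf{B}_NU_N^*}]$. Here I would exploit the observation stated right before the theorem: the remainder $R(\mathbf{A}_N,\mathbf{B}_N)$ in Theorem~\ref{Mainth_orth} is bounded by a sum of products $\|\mathbf{A}_N\|_{*,a}\|\mathbf{B}_N\|_{*,b}$, and by the infinitesimal hypothesis, every trace of a product involving a polynomial evaluation in $\mathcal{I}_1\cup\mathcal{I}_2$ is $O(N^{-1})$ (since $\mathcal{I}_1,\mathcal{I}_2$ are ideals, closed under multiplication by the whole algebras). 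Expanding the centered product into monomials, each monomial retains the factor $v$, so Theorem~\ref{Mainth_orth} applied to each such monomial yields a remainder of order $N^{-1}\cdot N^{-1}=N^{-2}$. The main free-product contributions cancel by the same combinatorial identity that underlies the proof of Proposition~\ref{Asymptotic_B_freeness}, propagated by induction on the length of the alternating word. This gives the required $o(N^{-1})$ bound, and Remark~\ref{Asymptotic_B_freeness_ortho} then yields the existence of $\tau'$ on $\mathcal{I}$ and the type-$B$ freeness of $(\mathcal{A}_1,\mathcal{I}_1)$ and $(\mathcal{A}_2,\mathcal{I}_2)$.

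The almost-sure statements in the self-adjoint case follow by replaying the concentration argument at the end of the proof of Theorem~\ref{th:free_type_B}, with Corollary~17 of~\cite{meckes2013spectral} replaced by its orthogonal counterpart (also contained in~\cite{meckes2013spectral}), which yields a Gaussian tail bound of the same form on $O(N)$ or $SO(N)$. The Lipschitz estimate of $U\mapsto \tau_{\mathbf{A}_N, U\mathbf{B}_NU^*}(P)$ via the non-commutative H\"older inequality is unchanged, and the Borel--Cantelli step proceeds verbatim. The main obstacle in this plan is the bookkeeping in the second paragraph: one must unpack the orthogonal Weingarten expansion underlying Theorem~\ref{Mainth_orth} (as developed in~\cite{Collins2004}) and verify that in each summand of the remainder, at least one cycle contains the ideal element $v$ and hence contributes a normalized trace of order $O(N^{-1})$. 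Once this structural refinement of the bound is made explicit, the extra factor of $N^{-1}$ is automatic and the remainder of the argument is combinatorial and routine.
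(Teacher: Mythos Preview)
Your approach to the expectation part (existence of $\tau$, $\tau'$ on $\mathcal{A}$ and $\mathcal{I}$, and type-$B$ freeness) matches the paper's: both invoke Theorem~\ref{Mainth_orth} together with Remark~\ref{Asymptotic_B_freeness_ortho}, using the observation that the $O(N^{-1})$ remainder improves to $O(N^{-2})$ once at least one factor lies in $\mathcal{I}_1\cup\mathcal{I}_2$. Your write-up is in fact more explicit than the paper's (which simply asserts ``we have the rate of convergence $O(N^{-2})$ on $\mathcal{I}$''), and your caveat about needing to inspect the orthogonal Weingarten expansion to see that every remainder summand carries a cycle containing $v$ is exactly the right point. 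One small imprecision: the claim that ``each monomial retains the factor $v$'' is not literally true when expanding $(a_n-\tau_t(a_n))\cdots(v-\tau_t(v))\cdots(b_m-\tau_t(b_m))$, since the term where $v$ is replaced by the scalar $-\tau_t(v)$ does not contain $v$. The fix is easy: that term carries the prefactor $\tau_t(v)=O(N^{-1})$, and the remaining centered alternating word has $\tau_t$-value equal to its (vanishing) free-product value plus $O(N^{-1})$, so the contribution is $O(N^{-2})$; combining with $\tau_{\mathbf{A}_N}\star\tau_{\mathbf{B}_N}(W)=0$ on the full centered word gives the required $o(N^{-1})$.

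The genuine gap is in the concentration step for $O(N)$. You assert that the orthogonal counterpart of the Meckes inequality ``yields a Gaussian tail bound of the same form on $O(N)$ or $SO(N)$''. This is fine for $SO(N)$, but it fails as stated for $O(N)$: the group $O(N)$ is disconnected, and the standard log-Sobolev/Bakry--\'Emery concentration machinery applies to the connected components, not to the whole group. The paper handles this explicitly by conditioning on $\det(U_N)$: one proves the result separately on $SO(N)$ (via \cite[Corollary 4.4.28]{anderson2010}) and on the coset $SO^{-}(N)=\{U\in O(N):\det U=-1\}$ (via \cite[Proposition~2.2]{meckes2013concentration}), checking that both Theorem~\ref{Mainth_orth} and the concentration inequality hold on $SO^{-}(N)$ as well, and then averages over the two components to recover the $O(N)$ case. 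Without this decomposition, the almost-sure conclusion for Haar measure on $O(N)$ is not justified.
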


\begin{proof}The proof follows the one of Theorem~\ref{th:asymp_freeness_type_B_orth}. It is not possible to apply directly Proposition~\ref{Asymptotic_B_freeness} here because the freeness is not of order $O(N^{-2})$. However, we have the rate of convergence $O(N^{-2})$ on $\mathcal{I}$, which yields the convergence of $\EE[\tau_{\mathbf{A}_N, U_N\mathbf{B}_NU_N^*}]$ to $\tau$ and $\tau'$, and the freeness of type $B$ as explained in Remark~\ref{Asymptotic_B_freeness_ortho}.

The almost sure convergence is a consequence of the concentration phenomenon. The analogue of Theorem~\ref{th:concentration_unitary} is true for the special orthogonal group $SO(N)$ (see \cite[Corollary 4.4.28]{anderson2010}). Unfortunately, this concentration fail to apply to $O(N)$ because it is not connected. Following \cite{meckes2013concentration}, we condition on the two possible values of $\det(U_N)$. For this purpose, it is useful to consider the uniform measure on the homogeneous space $SO^{-}(N)=\{U_N\in O(N) : \det(U_N)=-1\}$ (it is the invariant measure under the action of $SO(N)$). Theorem~\ref{Mainth_orth} and Theorem~\ref{th:asymp_freeness_type_B_orth} remain true if $(U_N)_{N\geq 1}$ is uniformly distributed in $SO^{-}(N)$ because the concentration phenomenon holds for $SO^{-}(N)$ (see Proposition 2.2 of \cite{meckes2013concentration}). The almost sure result for $O(N)$ is obtained by conditioning on the two possible values of the determinant, and using the result for $SO(N)$ and $SO^{-}(N)$.
\end{proof}
As for the unitary case, we obtain (cyclic) monotone independence and conditional independence as consequences of Theorem~\ref{th:asymp_freeness_type_B_orth}.
\begin{theorem}
In Theorem~\ref{th:cyclic_monotone}, Theorem~\ref{th:conditionally_free} and Corollary~\ref{th:mon}, the matrices $(U_N)_{N\geq 1}$ can be taken uniformly distributed in $O(N)$ or in $SO(N)$ whenever $(\mathbf{A}_N)_{N\geq 1}$ and $(\mathbf{B}_N)_{N\geq 1}$ are real symmetric matrices, and $v_N\in \mathbb{R}^N$.
\end{theorem}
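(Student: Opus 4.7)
The plan is simply to rerun the three proofs from the unitary case, substituting Theorem~\ref{th:asymp_freeness_type_B_orth} for Theorem~\ref{th:free_type_B} at the one spot where the random matrix input enters, and observing that every subsequent step is purely algebraic. In particular, the implications \emph{freeness of type }$B \Rightarrow$\emph{ cyclic monotone independence} (Proposition~\ref{from_typeB_to_mon}), \emph{freeness of type }$B \Rightarrow$\emph{ conditional freeness} (Proposition~\ref{Prop:cfreeness}), and \emph{conditional freeness }$\Rightarrow$\emph{ monotone independence} (Proposition~\ref{From_cfree_to_mon}) make no reference to the underlying random matrix ensemble, so they apply verbatim once the appropriate freeness of type $B$ is in hand.

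For Theorem~\ref{th:cyclic_monotone} in the orthogonal case, I would take $k'=k$, $\ell'=0$ in Theorem~\ref{th:asymp_freeness_type_B_orth}, obtain the existence of $\tau$ and $\tau'$ along with the freeness of type $B$ of $(\mathcal{A}_1,\mathcal{I}_1)$ and $(\mathcal{A}_2,\{0\})$ in $(\mathcal{A},\tau,\mathcal{I},\tau')$, and conclude by Proposition~\ref{from_typeB_to_mon}. The almost sure version is automatic since Theorem~\ref{th:asymp_freeness_type_B_orth} already delivers almost sure convergence in the self-adjoint case.

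For Theorem~\ref{th:conditionally_free}, I would copy the reduction used in the unitary proof: augment $\mathbf{A}_N$ by the rank-one matrix $v_Nv_N^t$, which is real symmetric (this is the only place where the assumption $v_N\in\mathbb{R}^N$ is used) and so keeps us in the orthogonal framework. The scaling identity
\[
N\cdot \tau_{\mathbf{A}_N}(P_1X_1P_2)=\langle P_2(\mathbf{A}_N)P_1(\mathbf{A}_N)v_N,v_N\rangle
\]
is a linear algebra identity that does not depend on the field, so the ideal $\mathcal{I}_1$ generated by $X_1$ gives $\tau_{\mathbf{A}_N}(P)=\tfrac{1}{N}\tau'(P)+o(N^{-1})$ for $P\in\mathcal{I}_1$, with $\tau'(P_1X_1P_2)=\varphi(P_2P_1)$. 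Theorem~\ref{th:asymp_freeness_type_B_orth} (with $k'=1$, $\ell'=0$) then yields the freeness of type $B$, and Proposition~\ref{Prop:cfreeness} upgrades it to conditional freeness of $(\mathcal{A}_1,\mathcal{A}_2)$ with respect to $(\tau,\varphi)$, with $\varphi(P):=\tau'(X_1P)$. Corollary~\ref{th:mon} then follows from Theorem~\ref{th:conditionally_free} in the orthogonal case exactly as in the unitary case, by invoking Proposition~\ref{From_cfree_to_mon} together with the hypothesis $\mathcal{I}_1\subset\ker(\tau)$.

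The only potential obstacle is the weaker $O(N^{-1})$ rate of Theorem~\ref{Mainth_orth} compared to the $O(N^{-2})$ rate of Theorem~\ref{Mainth}: one might fear that it is too weak to extract freeness of type $B$. However, this worry is already resolved in the proof of Theorem~\ref{th:asymp_freeness_type_B_orth}, where it is observed that the remainder $N^{-1}R(\mathbf{A}_N,\mathbf{B}_N)$ is actually $O(N^{-2})$ \emph{on the ideal} $\mathcal{I}$, because there each relevant factor contributes an extra $N^{-1}$ through the assumed scaling of $\tau_{\mathbf{A}_N}$ and $\tau_{\mathbf{B}_N}$ on $\mathcal{I}_1\cup\mathcal{I}_2$. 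Thus the hypothesis of Remark~\ref{Asymptotic_B_freeness_ortho} is satisfied, the almost-sure part follows from concentration on $SO(N)$ (and on $SO^{-}(N)$ via conditioning on $\det(U_N)$ for the $O(N)$ case), and no further analytic work beyond what already appears is needed.
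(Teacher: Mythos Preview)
Your proposal is correct and matches the paper's approach exactly: the paper gives no explicit proof for this theorem, simply noting that ``as for the unitary case, we obtain (cyclic) monotone independence and conditional independence as consequences of Theorem~\ref{th:asymp_freeness_type_B_orth}.'' Your write-up spells out precisely this reduction, including the one place where $v_N\in\mathbb{R}^N$ is needed (to ensure $v_Nv_N^t$ is real symmetric), and correctly defers the $O(N^{-1})$ versus $O(N^{-2})$ issue to the proof of Theorem~\ref{th:asymp_freeness_type_B_orth} where it is already handled.
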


\bibliographystyle{alpha}
\bibliography{biblio}
\end{document}